\def\section{%
  \@startsection{section}{1}
    {\z@}
    {2.0ex plus 0.8ex minus .1ex}
    {1.0ex plus .2ex}
    {\large\bfseries\boldmath\centering\MakeTextUppercase}%
}
\def\curlA{\mathcal A}
\def\curlL{\mathcal L}
\def\curlyL{\mathscr L}
\def\grad{\nabla}
\newcommand{\si}{\sigma}
\newcommand{\downto}{\searrow}
\newcommand{\ga}{\gamma}
\newcommand{\ep}{\epsilon}
\newcommand{\vare}{\varepsilon}
\newcommand{\al}{\alpha}
\newcommand{\La}{\Lambda}
\newcommand{\Vol}{{\rm Vol}}
\newcommand{\vol}{{\rm Vol}}
\newcommand{\dist}{{\rm dist}}
\newcommand{\lap}{\Delta}
\newcommand{\de}{\delta}
\newcommand{\boundary}{\partial}
\newcommand{\ti}{\tilde}
\def\of{\circ}
\newcommand{\partt}{\frac{\partial}{\partial t}}
\newcommand{\upto}{\nearrow}
\def\Sc{{\rm  R}}
\def\Rm{{\rm  Rm}}
\def\si{\sigma}
\def\Riem{{\rm  Rm}}
\def\Ric{{\rm Ric}}
\def\Rc{{\rm Ric}}
\def\Ricci{{\rm Ric}}
\def\boundary{ \partial }
\newcommand{\N}{ \mathbb N}
\newcommand{\R}{ \mathbb R}
\newcommand{\B}{ \mathbb B}
\newtheorem{thm}{Theorem}[section]
\newtheorem{lem}[thm]{Lemma}
\newtheorem{emptythm}[thm]{}
\newtheorem{rem}[thm]{Remark}
\newtheorem{defi}[thm]{Definition}
\numberwithin{equation}{section}
\begin{document}
\title[ Solutions to Ricci flow whose scalar curvature is in $L^{p}$]{ Volume estimates  and convergence results  for  solutions to  Ricci flow with  $L^{p}$ bounded scalar curvature}
\author[\sc\small J\MakeLowercase{iawei} L\MakeLowercase{iu and} M\MakeLowercase{iles} S\MakeLowercase{imon}]{\sc\large J\MakeLowercase{iawei} L\MakeLowercase{iu and} M\MakeLowercase{iles} S\MakeLowercase{imon}}
\address{Jiawei Liu\\ School of Mathematics and Statistics\\ Nanjing University of Science and  Technology\\ Xiaolingwei Street 200\\ Nanjing 210094, China} \email{jiawei.liu@njust.edu.cn}
\address{Miles Simon\\ Institut f\"ur Analysis und Numerik\\ Otto-von-Guericke-Universit\"at Magdeburg\\ universit\"atsplatz 2\\ Magdeburg 39106\\ Germany} \email{miles.simon@ovgu.de}
\subjclass[2010]{53C44}
\keywords{Ricci flow,\ K\"ahler-Ricci flow}
\thanks{The authors are supported by the Special Priority Program SPP 2026 ``Geometry at Infinity" from the German Research Foundation (DFG)}

\maketitle
 
\vskip -3.99ex

\centerline{\noindent\mbox{\rule{3.99cm}{0.5pt}}}

\vskip 5.01ex

%\centerline{\noindent\mbox{\rule{3.99cm}{0.5pt}}}

%\vskip 5.01ex

\ \ \ \ {\bf Abstract: }
 In this paper we study $n$-dimensional Ricci flows $(M^n,g(t))_{t\in [0,T)},$ where $T< \infty$ is a potentially singular time, and for which the spatial $L^p$ norm,  $p>\frac n 2$,  of the scalar curvature is uniformly  bounded on $[0,T).$ 
 In the case that $M$ is closed and four dimensional,  we explain why    non-collapsing estimates hold and how they can be combined  with    integral bounds on the Ricci and full curvature tensor of  the  prequel paper \cite{LiuSim1}, as well as non-inflating estimates (already known due to works of Bamler \cite{Bam}), to   obtain  an improved space time integral bound of the Ricci curvature.  
  As an application of these estimates,  we show  that if we further restrict to $n=4$, then  the solution convergences to an orbifold as $t \to T$ and  that the flow can be extended    using the Orbifold Ricci flow to the time interval  $ [0,T+\sigma)$ for some $\sigma>0.$
  We also prove local versions of many of the  results mentioned above.

\medskip

%\tableofcontents

\section{Introduction}\label{sec:intro}
 
This paper is the sequel to the paper \cite{LiuSim1}, where various  {\bf localised weighted   integral curvature estimates} were proved  for  four dimensional real Ricci flows and  Kähler Ricci flows in all dimensions. The     localised weighted curvature   integral estimates 
  generalise and improve  those integral formulae proved in \cite{MSIM1}.
  In this paper we use these estimates  as well as other methods and estimates    to 
examine  the structure of possible finite time singular regions for closed real solutions $(M^n,g(t))_{t\in [0,  T)}$ to the  Ricci flow.   

The Ricci flow, 
$\partt g(t) = -2\Rc(g(t)),$ was first introduced and studied by Hamilton in \cite{Ham}.
We are interested in the behaviour of the flow near potentially singular finite times.

In particular,  we are interested  in  the setting: \\
\noindent ({\bf B}):   $(M^n,g(t))_{t\in [0,T)}, 2\leq T < \infty$  is a smooth 
$n$-dimensional  solution   to Ricci flow   with  $\Sc_{g(t)} \geq -1$ for all $t\in  [0,T)$ and $N$  is a connected, smooth   $n$-dimensional submanifold   with smooth boundary $\boundary N$, $N$  compactly contained in $M$,   such that 
$  \Omega :=  \cup_{s=0}^{T} B_{g(s)}(\boundary N,10)  $ has compact closure  in $M$  and  $\sup\limits_{    x \in  \overline{ \Omega }, t  \in [0,T) } |\grad^k \Rm_{g(t)}|_{g(t)} \leq 1$   for all $k\in\{1,\ldots,4\},$\\     $ \sup\limits_{    x \in  \overline{ \Omega }, t  \in [0,T) } |\grad^k \Rm_{g(t)}|_{g(t)}\leq C_k< \infty ,$ for all $k\geq 5,$  $k\in \N,$
and \\$\sup\limits_{x \in \Omega \cup N, t \in [0,1] }  |\Rm_{g(t)}|_{g(t)}\leq 1,$ 
\vspace{1cm}
 
 \vspace{0.5cm}

 See the introduction  of \cite{LiuSim1} for some  equivalent conditions, denoted by (${\rm \bf  A}$) and (${\rm \bf C}$) in \cite{LiuSim1},  to  (${\rm  \bf B}$).

We are interested in the behaviour of the Ricci flow in the local region $N$ as $t\upto     T< \infty$ with $  T$ being a potentially singular time and where a neighbourhood of   $\boundary N$ is regular :  $(N,g(t))_{t\in [0,  T)}$ is also a solution to Ricci flow, which possibly becomes singular at time $  T$, but it is uniformly 
regular (in time) at and near its spatial boundary.

 Under the further assumption that 
\begin{eqnarray*} 
\int_N |\Sc_{g(t)}|^{\frac{n}{2}+\si} dV_{g(t)} \leq L < \infty 
\end{eqnarray*}
for all $t\in [0,  T),$ for some $\si\in (0,1) ,$  
  we show 
\begin{itemize}
\item[(a)]   non-collapsing  estimates, generalising  results of Perelman  \cite{GP}, Zhang \cite{QSZ,QSZ2}   and Hamanaka \cite{SHA}: see Theorem  \ref{mainthmIntro}. 
We also give a proof of a non-inflating estimate  in this setting (see Section 3), by generalising the proof of Zhang \cite{QSZ1}, who proved non-inflating in a bounded scalar curvature setting.    The non-inflating estimate is  known to hold in a much more  general setting due to the work of Bamler (see Theorem 8.1 in \cite{Bam}).
Also, local   non-collapsing estimates  for points near so called $H-$ centres depending on the local 
  {\it  Nash-Entropy}   have been shown to hold by Bamler (see Section 6 of \cite{Bam}). 

 \item[(b)] distance estimates and $C^0$ orbifold convergence of $(N^4,g(t))$ 
 as $t \upto   T< \infty,$ in the case that  $n=4$  and $M^4$ is closed   : see Theorem \ref{OrbifoldThmIntro}.  This  generalises the results of Bamler/Zhang \cite{RBQZ,RBQZ2} and Simon  \cite{MSIM1, MSIM2}, where the case that the  scalar curvature remains uniformly bounded on $[0,  T)$ was considered.\\
\item[(c)] if $M$ is closed, and $N=M^4$ is   real four dimensional,   then   the closed   four dimensional  solutions $(M^4,g(t))_{t\in [0,  T)}$  can be extended  for a short time $\si >0$ to
 $[0, T+ \si)$ using the Orbifold Ricci flow, thus generalising  the results of Simon \cite{MSIM2}, where an analogous result was shown
 in the case that the scalar curvature remains uniformly bounded  on $[0,  T).$
\end{itemize} 
We gather some of the main estimates of this paper together in the  following  Theorem.

\begin{thm}\label{mainthmIntro} 
Let $(M,g(t))_{t\in[0, T)}$ with $0<T<\infty$ be a smooth solution to Ricci flow on a closed $n$-dimensional Riemannian manifold. Assume $N  \subseteq M,$ $\Omega$   and $(M^{n},g(t))_{t\in [0, T)}$    are  as in ({\bf B}) and  that $\sup_{t\in [0, T)} \int_{N}|\Sc_{g(t)}|^{\frac{n}{2}+12\alpha}dV_{g(t)} =: L < \infty $   for some constant $0<\alpha<\frac 1 {12}.$  
Then there exists a constant $ \infty > \si_0$   such that 
\begin{eqnarray}\label{nonin1Intro}
  1)  \ \vol_{g(t)}(B_{g(t)}(x,r)) \geq  \si_0 r^n   \ \ \ \ \ \ \  \ \ \ \ \ \ \ \ \ \ \   \ \ \ \ \ \ \ \ \ \ \ &&
\end{eqnarray}
for all $x \in N,$ for all $r\in (0,1).$ 
If we further assume that  $M=M^4$  is real four dimensional, or that  $(M^n,g_0)$   is Kähler (complex  dimension   $\frac n 2$, 
real dimension   $n$) and closed, then we also  have : 
\begin{eqnarray}\label{nonin2Intro} 
&& 2) \ \text { there exists } K_0\in (0,\infty) \cr
&&  \text{ such that }  \int_{N} |\Rm_{g(t)}|_{g(t)}^2 dV_{g(t)}    
  \leq   K_0    \text{ for all }    t \in (0,T), \label{RiemestgoodIntro}\\
&& 3)  \ \text {for all} \     Y>0, \text { there exists } c_1 \text{ such that }  \cr
&&  
  \int_{S}^V \int_{B_{g(s)}(p,Y\sqrt{V-s})} |\Rc_{g(s)}|_{g(s)}^{2+\al^3}  dV_{g(s)}  ds  
  \leq  c_1 (V-S)^{1 + \frac{\al}{16}} \cr
 &&  \text{ for all }  p\in N,  t \in (0,T),   0<S<V \leq  T,\  \text{with} \ Y\sqrt{V-S}\leq 1.
\end{eqnarray}
 
\end{thm}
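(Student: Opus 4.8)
The plan is to establish the three estimates in order, since each feeds into the next. For part 1), the non-collapsing estimate, I would adapt Perelman's argument for the local setting. The $L^p$ bound on scalar curvature with $p > n/2$ is exactly what is needed to control the $\mathcal{W}$-entropy (or the reduced volume) under the flow: the key point is that the bad term in the monotonicity formula involves $\int R \, |\cdot|$ against a heat kernel, and by Hölder together with heat kernel (Gaussian) upper bounds — which hold near $N$ because of the uniform regularity near $\partial N$ in (\textbf{B}) — one gets that the entropy decreases by at most a bounded amount on $[0,T)$. Concretely, I would use the non-inflating estimate (proved in Section 3, generalising Zhang \cite{QSZ1}, or quoting Bamler \cite{Bam}) to bound volumes of balls from above, then run the localised Perelman/Zhang argument with a cutoff supported in $\Omega \cup N$ to turn the entropy bound into the volume lower bound $\vol_{g(t)}(B_{g(t)}(x,r)) \geq \sigma_0 r^n$ for $x \in N$, $r \in (0,1)$. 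The boundary regularity in (\textbf{B}) is what makes the cutoff errors controllable. This step is essentially a localisation of known results (Perelman \cite{GP}, Zhang \cite{QSZ,QSZ2}, Hamanaka \cite{SHA}).

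For part 2), the uniform $L^2$ bound on the full curvature tensor, I would combine the non-collapsing of part 1) and the non-inflating estimate with the localised weighted integral curvature estimates from the prequel \cite{LiuSim1}. In dimension four (or Kähler), the prequel gives a differential/integral inequality for $\int_N |\Rm_{g(t)}|^2 \, dV_{g(t)}$ of the form $\frac{d}{dt}\int |\Rm|^2 \lesssim (\text{terms involving } \int |\Rm|^2 \text{ and } \int |\Sc|^{n/2+\sigma})$, exploiting the special algebraic structure of the evolution of $|\Rm|^2$ in $4$d (Gauss–Bonnet / the fact that $\int |\Rm|^2$ differs from a conformally/topologically controlled quantity by curvature integrals of lower order). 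Feeding in the hypothesis $\sup_t \int_N |\Sc|^{n/2+12\alpha} \leq L$ and using volume non-collapsing/non-inflating to absorb the geometric quantities, a Grönwall-type argument on $[0,T)$ with $T < \infty$ yields the time-uniform bound $K_0$. The role of non-collapsing here is to prevent $\epsilon$-regularity balls from shrinking, which would otherwise let $\int |\Rm|^2$ blow up.

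For part 3), the improved space-time integral bound on Ricci, I would interpolate. Part 2) gives $\int_N |\Rm_{g(s)}|^2 \, dV_{g(s)} \leq K_0$ uniformly, and in particular $\int_N |\Rc_{g(s)}|^2 \, dV_{g(s)} \leq K_0$; non-inflating gives $\vol_{g(s)}(B_{g(s)}(p, Y\sqrt{V-s})) \leq C (Y\sqrt{V-s})^n$ for the backward parabolic balls appearing in the statement. Then for the exponent $2 + \alpha^3$, slightly above $2$, I would use Hölder to split $\int |\Rc|^{2+\alpha^3} = \int |\Rc|^2 \cdot |\Rc|^{\alpha^3}$, controlling the $|\Rc|^{\alpha^3}$ factor either by a higher $L^q$ Ricci bound coming from \cite{LiuSim1} or by interpolating against the $L^{n/2+\sigma}$ scalar curvature bound together with Sobolev-type inequalities (valid because of non-collapsing), and the $\int |\Rc|^2$ factor by part 2). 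Integrating in $s$ from $S$ to $V$ and carefully tracking the powers of $(V-s)$ from the ball-volume bound produces the claimed estimate $c_1 (V-S)^{1 + \alpha/16}$; the precise bookkeeping of exponents (why $2+\alpha^3$ on the Ricci, why the gain $(V-S)^{\alpha/16}$, and where the factor $12\alpha$ in the hypothesis is consumed) is the delicate part. I expect the main obstacle to be precisely this exponent accounting in part 3) — ensuring the interpolation closes with a strictly positive power gain $\alpha/16$ uniformly over all admissible $p \in N$, $S$, $V$ with $Y\sqrt{V-S} \leq 1$ — together with verifying that all the localised estimates from \cite{LiuSim1} and the non-inflating estimate are available on exactly the parabolic regions that occur here, which is where the geometry near $\partial N$ must be handled with care.
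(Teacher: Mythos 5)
Your overall architecture (non-collapsing, then the prequel's curvature bounds, then non-inflating feeding a space--time Ricci estimate) matches the paper, but the argument you sketch for part 3) has a genuine gap. Splitting $\int |\Rc|^{2+\al^3} = \int |\Rc|^{2}\cdot|\Rc|^{\al^3}$ and using the time-uniform spatial bound $\int_N |\Rc_{g(s)}|^2\,dV_{g(s)} \leq K_0$ from part 2) together with the ball-volume bound cannot produce the gain $(V-S)^{1+\frac{\al}{16}}$: a bound that is merely uniform in $s$ gives no decay of $\int_{B_{g(s)}(p,Y\sqrt{V-s})}|\Rc|^2$ as $s \upto V$ (the curvature may concentrate exactly in those shrinking balls), so integrating in $s$ yields at best a factor $(V-S)$ with no extra positive power. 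Moreover, the suggestion to control the $|\Rc|^{\al^3}$ factor ``by interpolating against the $L^{\frac n2+\si}$ scalar curvature bound'' cannot work, since Ricci curvature is not dominated by scalar curvature in any $L^p$ sense. The missing ingredient is the \emph{weighted space--time} $L^4$ Ricci estimate of the prequel: in the paper, part 3) is proved by observing that the non-inflating estimate (Bamler's Theorem 8.1, or the Zhang-type proof given in Section 3) verifies the weak non-inflating hypothesis \eqref{NonExpandTIntro}, and then quoting (iii) of Theorem \ref{LiuSim1}, whose proof interpolates against the $(V-t)^{1-\al}$-weighted $L^4$ Ricci bound (ii); it is that weight, combined with ${\rm Vol}(B_{g(t)}(p,Y\sqrt{V-t})) \lesssim |V-t|^2$, which produces the exponent $1+\frac{\al}{16}$. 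Any interpolation scheme must use that weighted bound, not the spatial $L^2$ bound of part 2).

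Parts 1) and 2) are also done quite differently, and your versions are either more delicate than necessary or incomplete. For 1), no localisation of Perelman's entropy, no cutoff, no heat-kernel Gaussian bounds, and no non-inflating estimate are needed: since $M$ is closed, the Ye--Zhang uniform Sobolev inequality (Theorem \ref{SobolevZhangYe}) holds globally with constants depending only on $g(0)$, $T$, $n$, and then a purely static slice-wise argument (Lemma \ref{noncollapselemma}: if the ball volume were too small, H\"older absorbs the scalar curvature term using the local $L^{\frac n2+\si}$ bound, yielding a Faber--Krahn inequality and, via Carron's Lemma \ref{Faber}, a contradictory volume lower bound) gives 1) through Theorem \ref{noncollapsethmcom}; condition ({\bf B}) enters only to ensure $B_{g(t)}(x,r) \subseteq Z = N \cup \Omega$ for $x\in N$, $r\leq 10$, where the scalar integral is controlled. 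Running a localised entropy argument with only local $L^p$ scalar control would require substantial new work that you have not supplied. For 2), the paper simply cites (i) of Theorem \ref{LiuSim1}; your proposed Gr\"onwall argument on $\int |\Rm|^2$ does not close as stated, because $\frac{d}{dt}\int |\Rm|^2$ contains a cubic term $\int \Rm * \Rm * \Rm$ that is not controlled by $\int|\Rm|^2$ and scalar curvature integrals, and the actual four-dimensional mechanism (in the prequel) is the Chern--Gauss--Bonnet reduction of $\int|\Rm|^2$ to Ricci and scalar curvature integrals combined with the weighted integral estimates; non-collapsing plays no role in that step.
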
 
\begin{rem}\label{noninflremark}   Theorem 8.1 of   \cite{Bam},  implies   in the setting    ({\rm \bf B})  being considered,   that  a  non-inflating   estimate  holds:  That is there exist  $\infty>\si_1,\si_0>0$ such that 
  1) may be replaced by 
   $$ \ti 1) \ \si_0 r^n \leqslant \vol_{g(t)}(B_{g(t)}(x,r)) \leqslant \si_1r^n $$  for all 
  $x \in N,$ for all $r\in (0,1).$ 
 We   give a   proof of this  non-inflating estimate  in the  setting of the above Theorem (see Section 3), by generalising the proof of Zhang \cite{QSZ1}, who proved non-inflating in a bounded scalar curvature setting. We hope that this method  may be used  in other related settings to obtain related estimates.    
\end{rem}
\begin{rem}
Notice that in case $M^n=M^4$ is real four dimensional, that the inequality 3) has broken the scale: scaling solutions by  constants $C$ larger than one  will result in a similar inequality for the new solution, where the constant $c_1$ on the right hand is replaced by  
$  \frac{c_1}{C^{ \al^3 + \al/16}},$ which goes to zero as  
  $C \to \infty$ (see Section \ref{AnApplication} for an application of this).     
\end{rem}

Using Theorem  \ref{mainthmIntro}, as well as some De-Giorgi-Moser-Nash estimates for Ricci flows in a $\frac{C}{t}$ setting which we present in  Section \ref{DGNM}, we are then able to use  the methods decribed in \cite{MSIM2} (compare \cite{RBQZ, RBQZ2, SHA}) 
 to show the following:
\begin{thm}\label{OrbifoldThmIntro}
Let 
  $(M^4,g(t))_{t \in [0, T)}$ be  a smooth, four dimensional  closed  solution to the  Ricci  flow, $\partt g = - \Ric(g),$  where $N$ and $(M^{n},g(t))_{t\in [0, T)}$  are   as in ({\bf B}), and assume further that  $\int_N |\Sc_{g(t)}|^{2+\si}dV_{g(t)}  \leq L < \infty$  for some $\infty>L,$ $1> \si >0$, for all $t\in [0, T).$  
Then $(N,d(g(t)) ) \to (X,d_X)$ in the Gromov-Hausdorff sense, as $t\upto  T,$ where $(X,d_X)$ is a metric space and a  $C^0$ Riemannian orbifold, and where the convergence and the metric are smooth away from  the, finitely many,  non-manifold points, $x_1, \ldots, x_L$ of $X$.  
If furthermore $N^4=M^4,$ then we can extend the flow with  the Orbifold Ricci flow for a time $\si>0$: there exists a solution $(X,h(t))_{t\in (T,T+ \si]}$ to the Orbifold Ricci flow, such that $(X,d(h(t)) ) \to (X,d_X)$ as $t \downto T$ uniformly.
 \end{thm}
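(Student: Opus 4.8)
The plan is to run the bubbling and orbifold--compactness scheme of \cite{MSIM2} (compare \cite{RBQZ,RBQZ2,SHA}), replacing the bounded--scalar--curvature inputs used there with the integral estimates of Theorem \ref{mainthmIntro} together with the $\frac{C}{t}$ De Giorgi--Moser--Nash estimates of Section \ref{DGNM}. First I would prove an $\vare$--regularity lemma in the setting ({\bf B}): there is $\vare_0>0$ such that if the $L^2$--norm of $\Rm$ over a parabolic ball of radius $2r$ about a point $(x_0,t_0)$ is less than $\vare_0$, then $|\Rm_{g(t_0)}|\le C r^{-2}$ on the concentric ball of radius $r$, with $C$ universal. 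In the bounded--curvature case this is classical; here one runs a Moser iteration for the evolution inequality $\partt|\Rm|\le\lap|\Rm|+c|\Rm|^2$, in which the Sobolev inequality and the local integral bounds needed to start the iteration are supplied by the $\frac{C}{t}$ estimates of Section \ref{DGNM} --- the essential point being that when $\Sc$ lies only in $L^{2+\si}$ one cannot rely on uniform heat--kernel bounds. Since $\int_N|\Rm_{g(t)}|^2\,dV_{g(t)}\le K_0$ uniformly in $t$ by part $2)$ of Theorem \ref{mainthmIntro}, at most $\lfloor K_0/\vare_0\rfloor$ points of $N$ can carry concentrated curvature as $t\upto T$; away from fixed neighbourhoods of these points $\vare$--regularity gives a uniform curvature bound, and then Shi's estimates together with the higher--order bounds of ({\bf B}) give uniform $C^\infty_{\mathrm{loc}}$ control of $g(t)$ there.

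Next I would control distances. Since $\partt d_{g(t)}(x,y)$ is, up to sign, $\int_\gamma\Rc_{g(t)}(\gamma',\gamma')\,ds$ along a minimising $g(t)$--geodesic $\gamma$, H\"older's inequality together with part $3)$ of Theorem \ref{mainthmIntro}, in the scale--broken form recorded in the remarks following that theorem, shows that $t\mapsto d_{g(t)}$ is uniformly Cauchy as $t\upto T$. Combined with the non--collapsing of part $1)$ and the non--inflating of Remark \ref{noninflremark}, this yields a compact length space $(X,d_X)$ with $(N,d_{g(t)})\to(X,d_X)$ in the Gromov--Hausdorff sense, and the $C^\infty_{\mathrm{loc}}$ convergence from the previous paragraph identifies $X$ minus finitely many points $x_1,\dots,x_L$ as a smooth Riemannian $4$--manifold onto which $g(t)$ converges smoothly.

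The heart of the argument is the local structure at each non--manifold point $x_i$. I would blow up by $g_j:=\lambda_j\,g(t_j)$, with $\lambda_j\to\infty$ and $t_j\upto T$ chosen near points $p_j\to x_i$ so as to capture the concentrating curvature, and extract a smooth limit via the $\frac{C}{t}$ estimates. In dimension four the scale invariance of $\int|\Rm|^2$ makes this limit complete, non--flat and of finite $L^2$ energy; the scale--supercriticality of $\int|\Sc|^{2+\si}$ makes it scalar--flat; and the scale--broken space--time Ricci bound of part $3)$ forces $\Rc\equiv 0$ on it. Hence the bubble model is a complete, static, Ricci--flat ALE orbifold, asymptotic to a cone $\R^4/\Gamma_i$. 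The same $L^2$--energy quantization that bounded the number of concentration points also bounds the depth of the bubble tree over each $x_i$; one then verifies that $(X,d_X)$ has flat tangent cone $\R^4/\Gamma_i$ at $x_i$, which furnishes the orbifold charts, and that $C^0$--continuity of the limit metric in these charts follows from the integral curvature bounds via a volume--ratio / almost--Euclidean argument.

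Finally, when $N=M^4$, I would continue the flow by running the Orbifold Ricci flow from the $C^0$ orbifold datum $(X,d_X)$: solving the orbifold Ricci--DeTurck equation chart by chart using the short--time existence theory for Ricci flow from rough (here orbifold--$C^0$) initial data in the spirit of \cite{MSIM2}, and checking that the resulting solution $(X,h(t))_{t\in(T,T+\si]}$ attains $d_X$ uniformly as $t\downto T$. The main obstacle throughout is the absence of any a priori pointwise curvature bound: every regularity and compactness step must be carried by the $\frac{C}{t}$ De Giorgi--Moser--Nash estimates, and the Ricci--flatness and ALE structure of the bubbles must be read off purely from the contrasting scale behaviour of the two controlled integrals --- $\Sc$ in $L^{2+\si}$ (supercritical, hence vanishing in the limit) against $|\Rm|^2$ in $L^1$ (critical, hence finite but a priori nonzero).
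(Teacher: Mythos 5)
Your overall architecture (regular/singular dichotomy, finitely many concentration points from the uniform $L^2$ bound on $\Rm$, smooth convergence away from them, orbifold structure, extension via the orbifold flow) matches the paper's, but the step on which everything rests is not justified and, as you set it up, is circular. You propose a parabolic $\vare$--regularity lemma obtained by Moser iteration on $\partial_t|\Rm|\le\lap|\Rm|+c|\Rm|^2$, with ``the Sobolev inequality and the local integral bounds needed to start the iteration supplied by the $\frac{C}{t}$ estimates of Section \ref{DGNM}.'' But Theorem \ref{MoserIteration} takes $|\Rm_{g(t)}|\le c_0/t$ on the ball as a \emph{hypothesis} (it is needed to build the Perelman-type cutoff \eqref{cut}), so it cannot be the source of the first pointwise curvature bound; and smallness of the spatial $L^2$ norm of $\Rm$ alone is critical in dimension four, so the quadratic nonlinearity cannot be absorbed and the iteration does not start. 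The paper's route is different at exactly this point: a \emph{regular point} is one admitting an $x$-good time sequence along which both $\int|\Rm|^2$ is small \emph{and} $\int|\Rc|^{2+\al^3}$ is small at scales $K_i\sqrt{T-t_i}$ (the latter coming from the scale-broken estimate 3) of Theorem \ref{mainthmIntro}); these two smallness conditions, together with non-collapsing/non-inflating, feed Anderson's harmonic-coordinate construction (Appendix A, Theorem \ref{hrtheo}), which is elliptic and at a fixed time, producing almost-Euclidean $W^{1,p}$ charts; Perelman's \emph{pseudolocality} then yields the $|\Rm|\le\ep/t$ bound forward in time, and only afterwards is the Moser estimate of Section \ref{DGNM} applied --- not to $|\Rm|$ but to $f=\sqrt{|\Rc|^2+\ep}$, with the rescaled $L^4$ space--time Ricci bound (ii) of Theorem \ref{LiuSim1} and Remark \ref{remarkext} as the integral input --- giving $|\Rc|\lesssim\ep\,t^{-(1-\al/4)}$, which is integrable in time. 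Your definition of regular points drops the Ricci-smallness, and without it the almost-Euclidean/pseudolocality step is unavailable.

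Two further steps would not survive as written. The claim that $d_{g(t)}$ is uniformly Cauchy ``by H\"older's inequality together with part 3)'' does not work: the variation of distance is a line integral of $\Rc$ along minimising geodesics, and H\"older cannot convert that into the volume integrals controlled by 3) without pointwise information; the paper instead gets metric control on the regular part from the integrable-in-time pointwise Ricci bound just described, and handles the singular region by the clustering argument, the distance estimate of Theorem 5.1 of \cite{MSIM2}, and the Neck Lemma of Appendix B (connectedness of annuli under an $L^p$ lower Ricci bound), which is what confines the singular set to finitely many balls of radius $C\sqrt{T-t_i}$. Finally, your bubbling picture (complete Ricci-flat ALE bubbles, bubble-tree depth bounded by energy quantization) is not what the paper proves and presupposes exactly the compactness you have not established; the paper instead follows Chapter 7 of \cite{MSIM2}, with (7.12) replaced by $\int|\Rc|^{2+\al^3}\to 0$, to show directly that the blow-ups of $(X,d_X)$ at each singular point converge to a flat cone $\R^4/\Gamma_j$, and then Chapters 8--9 of \cite{MSIM2} give the $C^0$ orbifold structure and the extension by the Orbifold Ricci flow.
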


 \section{Structure of the paper and some previous results}
 
 Here we recall the     integral estimates of    \cite{LiuSim1} that  we require in this paper, as well as a slightly modified version of one of the estimates that appear in that paper: 
  \begin{thm}[A minor modification of estimates in  Liu-Simon \cite{LiuSim1} ]\label{LiuSim1}  
  For $n\in\N,$ $1\leq {\rm V} \leq  T < \infty,$  $\alpha\in(0,\frac{1}{12}),$  
let   $(M^{n},g(t))_{t\in [0,T)}$  be a smooth,  real  solution
to Ricci flow and  assume 
that $n=4$ or that $(M^n,g_0)$ is Kähler (complex  dimension $\frac n 2,$ real dimension $n$) and closed.
Assume further that    $N  \subseteq M$   
  is  a  smooth  connected  real $n$-dimensional   submanifold with boundary, and  $N$ and $(M^{n},g(t))_{t\in [0,T)}$   are  as in ({\bf B}) and that 
$$\int_N |\Sc_{g(t)}|^{2+12\alpha} dV_{g(t)}  \leq L< \infty$$ for all $t\in [0,T)$ for some constant $L\in \R^{+}.$ 
Then, for all $l  \in [0,T)$ we have  
\begin{eqnarray} 
&& i)   \int_{N} |\Rm_{g(l)}|_{g(l)}^2 dV_{g(l)}   
  \leq  K_0\label{Riemestgood}, \\\nonumber
   && ii)   \int_{ {\rm V}-2s}^{{\rm V}-s} \int_{N \cap B_{g(t)}(p,r) } |\Rc_{g(t)}|_{g(t)}^4 dV_{g(t)} dt\\
   &&\ \ \   \leq \hat c_1 \frac{1}{s^{1-\alpha} } + 
  \hat c_1 \sup \{     |\Rc_{g(t)}|_{g(t)}^2 \ | \   t \in [{\rm V}-2s,{\rm V}-s] , x    \in B_{g(t)}(p,r) \} s^{1+\alpha}
 \end{eqnarray}
  for all $p \in N,$ $r\in (0,\infty),$   if  $s \in (0,1)$ and ${\rm V}-3s>0$, where  $\hat c_1,K_0$ are constants depending on $ n,$ $N,$ $g(0),$ $\Omega,$ $g|_{\Omega},$  $\frac{1}{\al},$ $T,$ $L.$ 
  If for some $Y,b_1 \in [1,\infty),$ and all $ S\in [V-1,V)$ satisfying  $Y\sqrt{V-S}   \leq   1$    we further assume the weak non-inflating condition    
\begin{eqnarray}
{\rm Vol}_{g(t)}(B_{g(t)}(p, Y\sqrt{V-t} ) ) \leq b_1   |V-t|^2  \cr
\text{ for all  }
   \ t \in [S,V), p \in N   \label{NonExpandTIntro}
\end{eqnarray} 
then we also obtain  
  \begin{eqnarray}
&&iii)  \int_{S}^V \int_{B_{g(t)}(p,Y\sqrt{V-t})} |\Rc_{g(t)}|_{g(t)}^{2+ \alpha^3 }  dV_{g(t)}  dt 
 \leq   \hat c_2    (V-S)^{1 + \frac{\al}{16}} \label{RiccestgoodIntro}
 \end{eqnarray} 
   for all  $S \in [V-1,V),$ satisfying $Y\sqrt{V-S}   \leq   1$  and all $p\in N,$  
  where $\hat c_2< \infty$ depends on  $  b_1, Y, n,N,g(0), \Omega, g|_{\Omega},  \frac{1}{\al}, T, L.$   
   For $V= T,$ we mean  
 $\lim_{V \upto T} \int_{S}^V \int_{B_{g(t)}(p,Y\sqrt{V-t})} |\Rc_{g(t)}|_{g(t)}^{2+\al^{3}}  dV_{g(t)}  dt  
  \leq  \hat c_2   (T-S)^{1 + \frac{\al}{16}} 
 $
  
  \end{thm}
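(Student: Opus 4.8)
The plan is to observe that parts i) and ii) are, after a harmless localisation and a relabelling of constants, precisely the localised weighted integral curvature estimates established in \cite{LiuSim1}, and that part iii) is obtained from ii) together with the weak non-inflating hypothesis \eqref{NonExpandTIntro} by a dyadic decomposition of the time interval at parabolic scales followed by Hölder interpolation. The latter is the same scheme as the one used for the corresponding estimate in \cite{LiuSim1}; the only change is that here \eqref{NonExpandTIntro} is assumed rather than being derived en route, so that one simply has to follow that argument while tracking the new constant dependence.

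\textbf{Parts i) and ii).} The estimates of \cite{LiuSim1} come in weighted form, i.e.\ $\int \phi^{2k}|\Rm_{g(l)}|^2\,dV_{g(l)}\leq C$ and an inequality like ii) with an extra factor $\phi^{2k}$, where $\phi$ is a cut-off supported in $N$ and identically $1$ on a large inner region, and $C$ depends on $n,N,g(0),\Omega,g|_{\Omega},\frac1\alpha,T,L$. On the collar $N\setminus\{\phi=1\}$ one has $|\Rm_{g(t)}|\leq 1$ by ({\bf B}), and $\overline\Omega$ has compact closure with geometry controlled uniformly in $t$; hence the $g(t)$-volume of $\{\phi\neq 1\}$ is bounded uniformly in $t$, and adding this contribution turns the weighted bounds into the unweighted bounds i) and ii) over $N$, resp.\ over $N\cap B_{g(t)}(p,r)$. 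The localisation to $B_{g(t)}(p,r)$ in ii) is already present in \cite{LiuSim1}; in any case intersecting the domain of integration with $B_{g(t)}(p,r)$ only decreases the left-hand side, so it is harmless.

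\textbf{Part iii).} Fix $p\in N$ and $S\in[V-1,V)$ with $Y\sqrt{V-S}\leq 1$, and decompose $[S,V)$ into the dyadic intervals $I_j:=[\,V-2^{-j+1}(V-S),\,V-2^{-j}(V-S)\,)$, $j\geq 1$, writing $s_j:=2^{-j}(V-S)$, so that $V-t\asymp s_j$ on $I_j$ and $B_{g(t)}(p,Y\sqrt{V-t})\subseteq B_{g(t)}(p,Y\sqrt{2\,s_j})=:B_j(t)$ for $t\in I_j$. On each $I_j$ (discarding by hand the finitely many largest scales where ii) is not applicable and where all quantities are uniformly bounded) we apply ii) with radius $Y\sqrt{2s_j}$ and time-scale $s_j$ to control $\int_{I_j}\int_{B_j(t)}|\Rc|^4\,dV\,dt$, and we use \eqref{NonExpandTIntro} (enlarging $Y$ by a fixed factor if needed) to bound $\int_{I_j}\vol_{g(t)}(B_j(t))\,dt\leq C\,b_1\,s_j^{\,3}$. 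Since $2+\alpha^3<4$, Hölder's inequality over the space-time region $\{(x,t):t\in I_j,\ x\in B_j(t)\}$ with exponent $\tfrac{4}{2+\alpha^3}$ gives
\begin{eqnarray*}
\int_{I_j}\int_{B_j(t)}|\Rc|^{2+\alpha^3}\,dV\,dt
&\leq& \Big(\int_{I_j}\int_{B_j(t)}|\Rc|^4\,dV\,dt\Big)^{\frac{2+\alpha^3}{4}}
 \Big(\int_{I_j}\vol_{g(t)}(B_j(t))\,dt\Big)^{\frac{2-\alpha^3}{4}}.
\end{eqnarray*}
Substituting ii) and the volume bound, and controlling the term $\hat c_1\sup|\Rc|^2\,s_j^{1+\alpha}$ coming from ii) by means of the curvature estimates already available at the parabolic scale $s_j$ (as in \cite{LiuSim1}, where this term is at most of the same order $s_j^{-(1-\alpha)}$ as the first one), one finds that the power of $s_j$ produced is $1+\tfrac\alpha2-\alpha^3+O(\alpha^4)$, which for $\alpha\in(0,\tfrac1{12})$ exceeds $1+\tfrac\alpha{16}$. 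Summing the resulting geometric series over $j\geq1$ and using $V-S\leq1$ yields iii) with $\hat c_2$ depending on $b_1,Y,n,N,g(0),\Omega,g|_{\Omega},\frac1\alpha,T,L$; the case $V=T$ then follows by monotone convergence as $V\upto T$.

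\textbf{Main obstacle.} The delicate point is the final bookkeeping: after interpolating the $L^4$ bound of ii) against the non-inflating volume factor, the "bad'' supremum term in ii) must still be weighted by a power of the parabolic scale strictly larger than $1+\tfrac\alpha{16}$, so that the dyadic sum converges and reproduces exactly the exponent $1+\tfrac\alpha{16}$. This balancing is precisely what forces the relationship between the small integrability gain $\alpha^3$ in the exponent $2+\alpha^3$ and the time-exponent gain $\tfrac\alpha{16}$, and is why a cruder choice such as $2+\alpha$ would fail. Keeping careful track of which parabolic region the supremum in ii) is taken over, so that the curvature control inherited from \cite{LiuSim1} genuinely applies there, is the other step that needs care; everything else is routine parabolic rescaling and summation.
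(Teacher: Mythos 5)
The paper does not re-derive these estimates at all: parts i) and ii) are cited verbatim as results (i) and (ii) of Theorem 1.3 in \cite{LiuSim1}, and part iii) is obtained by rerunning the proof of Equation~(2.40) in \cite{LiuSim1} with exactly two cosmetic changes: replacing $\sqrt{V-t}$ by $Y\sqrt{V-t}$ throughout (and checking $Y\sqrt{V-t}\leq 1$ keeps the balls inside $\Omega\cup N$), and using the hypothesis ${\rm Vol}_{g(t)}(B_{g(t)}(p,Y\sqrt{V-t}))\leq b_1|V-t|^2$ at the one place where the weaker $\sigma_1|V-t|^2$ bound was previously used. Your plan for i) and ii) (adding a collar contribution to pass from a weighted to an unweighted integral) is more machinery than the paper needs, though it would be harmless if \cite{LiuSim1} only provided the weighted form.

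For iii), however, there is a genuine gap rather than merely a different route. After the Hölder step over each dyadic slab $I_j$ you are left with the factor
$\bigl(\hat c_1 s_j^{-(1-\alpha)}+\hat c_1 (\sup|\Rc|)^{2} s_j^{1+\alpha}\bigr)^{(2+\alpha^3)/4}$,
and your argument that the second term ``is at most of the same order $s_j^{-(1-\alpha)}$ as the first one'' has no justification in this setting. For this you would need a pointwise bound of the form $\sup_{B_j(t)\times I_j}|\Rc|^2\lesssim s_j^{-2}$ at the parabolic scale $s_j$, i.e.\ Type-I-like curvature control. But $p\in N$ is allowed to be a singular point, and no such pointwise bound is available at potentially singular times under hypothesis ({\bf B}); indeed the whole point of the integral estimates of \cite{LiuSim1} is to avoid assuming one. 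If $|\Rc|$ blows up faster than $s_j^{-1}$ (which the hypotheses do not exclude), the supremum term dominates and the dyadic sum no longer produces the exponent $1+\tfrac{\alpha}{16}$. Appealing to ``curvature estimates already available'' and citing \cite{LiuSim1} for this step is circular, since the supremum term is present precisely because ii) does not assume such a bound; the proof of (2.40) in \cite{LiuSim1} must dispose of it by a different mechanism (e.g.\ interpolating against the $L^2$ curvature bound i) or using the weighted form of the estimates), and that mechanism is the missing idea here. Until that step is supplied, the proposal does not prove iii).

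A minor additional point: you state that ``the only change is that \eqref{NonExpandTIntro} is assumed rather than derived,'' but the paper also flags the replacement of the radius $\sqrt{V-t}$ by $Y\sqrt{V-t}$ and the verification that $B_{g(t)}(p,Y\sqrt{V-t})\subseteq B_{g(t)}(p,1)\subseteq\Omega\cup N$; those modifications, while routine, are exactly what the paper records and should be made explicit.
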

  \begin{proof}
  Claim (i) and (ii)  are the results (i) and (ii) of Theorem 1.3 in \cite{LiuSim1}. 
Claim (iii) is   Equation (2.40) of \cite{LiuSim1} in the case $Y=1,$  and for the general case $ Y\in [1,\infty)$ follows from the proof  of  Equation (2.40) in \cite{LiuSim1}. 
The proof is achieved by:\\
a)   replacing   $ {B_{g(t)}(p,\sqrt{{\rm V}-t})}$ by $ {B_{g(t)}(p,Y\sqrt{{\rm V}-t})} $  everywhere in the proof of (2.40) in \cite{LiuSim1}, and using  ${\rm Vol}_{g(t)}(B_{g(t)}(p, Y\sqrt{V-t} ) ) \leq b_1   |V-t|^2$ instead of   ${\rm Vol}_{g(t)}(B_{g(t)}(p, \sqrt{V-t} ) ) \leq \si_1   |V-t|^2$ 
  at the   steps  of  the proof of (2.40) in \cite{LiuSim1} where the inequality   ${\rm Vol}_{g(t)}(B_{g(t)}(p, \sqrt{V-t} ) ) \leq \si_1   |V-t|^2$ is used,  \\
  b)  replacing the fact that $B_{g(t)}(p,\sqrt{V-t}) \subseteq B_{g(t)}(p,1) \subseteq \Omega \cup N$ for all $t\in [V-1,V]$, shown at the beginning of the proof of   (2.40) in \cite{LiuSim1}, by the fact that 
 $B_{g(t)}(p,Y\sqrt{V-t}) \subseteq B_{g(t)}(p,1) \subseteq \Omega \cup N$  for all $t\in [S,V]$ everywhere in the proof of  (2.40) in \cite{LiuSim1},  
   which follows  from      $Y\sqrt{V-t} \leq 1$  for all $t\in [S,V]$. Here  $\Omega$  is as in condition ({\bf B}). 
  \end{proof}

\begin{rem}\label{remarkext}
If $M^n$ $=M^4$ is real four dimensional, and  we scale the solution by a constant $C>1$, that is  $\hat g(\hat t):= Cg(\frac{\hat t}{C}),$ then we obtain in place of (ii)
\begin{eqnarray*}
  && \int_{ \hat V-2\hat s}^{\hat V-\hat s} \int_{N \cap B_{\hat g(\hat t)}(p,\hat r) } |\Rc_{\hat g(\hat t)} |^4 dV_{\hat g(\hat t)} d\hat t \\
   && = \frac{1}{C}\int_{ V-2 s}^{ V-  s} 
   \int_{N \cap B_{  g(  t)}(p,r)  }|\Rc_{g(t)}|^4 dV_{ g(t)} dt  \\
    &&  \leq   \frac{1}{C}\hat c_2\frac{1}{s^{1-\alpha}}  + 
 \hat c_2 \sup \{     \frac{|\Rc|^2(x,t)}{C^2}  \ | \   t \in [V-2s,V-s] , x    \in B_{g(t)}(p,r)    \} C s^{1+\alpha}  
  \\
 && =   \frac{\hat c_2}{C^{\alpha}}\Big( \frac{1}{{\hat s}^{1-\alpha}}  + 
   \sup \{    |\hat{\Rc}|^2(x,\hat t)  \ | \   \hat t \in [\hat V-2\hat s,\hat V-\hat s] , x    \in B_{\hat g(\hat t)}(p,\hat r)    \}  {\hat s}^{1+\alpha}\Big)
 \end{eqnarray*}
for $\hat s = sC  \in (0, C),$  $\hat r = \sqrt{C}r  \in (0, \sqrt{C}),$and  $\hat V:= V C$. 
 \end{rem}

If $M$ is closed, a very general non-inflating estimate is known to hold due to Bamler ( \cite{Bam}, Theorem 8.1), and this implies that
the weaker condition 
     \eqref{NonExpandTIntro} holds. 
We give a short  reasonably simple  proof of  the   non-inflating  estimate  which is valid  in the  setting.
 we are considering. Our    proof in Section 3, uses  methods, estimates  of Zhang (\cite{QSZ1}), but is otherwise self-contained. We hope that the proof method    may be used  in other related
 settings to obtain related estimates.    
  
At the end of that section, we prove Theorem \ref{mainthmIntro}, as an application of the non-collapsing/non-inflating estimates,  and the integral curvature estimates proved in the prequel paper \cite{LiuSim1} to this paper. 
 In Section \ref{DGNM}, we prove De-Giorgi-Moser-Nash estimates 
to sub-solutions  of a heat like parabolic  equation with a non-negative reaction term, in the setting that the norm of the curvature and the reaction term  are bounded by $\frac{c_0}{t}$.
In the last section, we prove Theorem \ref{OrbifoldThmIntro}, as an application of the estimates proved in the sections before it. In the setting of a closed manifold with 
$\int_M |\Sc_{g(t)}|^{\frac n 2 + \si} dV_{g(t)} \leq L < \infty $ for all $t\in [0,T),$ for some $0<\si<1,$ where $T< \infty$ is a potential blow-up time, we show:\\
 -$(M,g(t)) \to (X,d_X),$ as $t\upto T,$ where $(X,d_X)$ is an orbifold, and that the convergence is   smooth away from the finitely many non-manifold points,\\
 -$(X,d_X)$ can be evolved for a short time using the Orbifold Ricci flow.

In Appendix A, we explain some mostly well known facts on Harmonic coordinates, which are needed in 
the proof of Section \ref{AnApplication},  when examining the behaviour of so called {\it regular points}. 
In Appendix B we prove a version of the {\it Neck  Lemma} of Anderson/Cheeger, which we use to show that small annuli centred at singular points on  the limiting, possibly singular metric space obtained with $t \upto T,$ are connected.

\section{Non-collapsing and non-inflating  estimates}\label{NCNE}

In this section, we consider  smooth   Ricci flows $(M,g(t))_{t\in [0,T)}$ on closed manifolds on  finite time intervals. 
  Ye \cite{RGY} (Theorem 1.1) and Zhang \cite{QSZ, QSZ2} (Theorem  1.1 in \cite{QSZ}  ) proved   a Sobolev inequality in this setting, namely :
\begin{thm}( Ye \cite{RGY}  , Zhang  \cite{QSZ, QSZ2} ) \label{SobolevZhangYe}
Let $(M,g(t))_{t\in [0,T)},$ with $T<\infty$ be a smooth closed solution to Ricci flow.
Then there exists 
$A,B>0$ depending only on \\
i) a lower bound for $\vol(M,g(0)),$  \\
 ii) the Sobolev constant $C_S(M,g_0)$ at time zero, and \\
iii) $T< \infty,$ $n   \in  \N$, $ \inf_M  \Sc_{g(0)} $  \\
such that
\begin{eqnarray} 
\int_M |u|^{\frac {2n}{n-2}} dV_{g(t)} \leq A \int_M ( |\grad u|^2_{g(t)}  + \Sc_{g(t)} \frac{u^2}{4}  )dV_{g(t)}
+ B \int_M  u^2 dV_{g(t)}
\end{eqnarray}
for all smooth $u:M \to \R$. 
\end{thm}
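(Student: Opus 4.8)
The plan is to deduce the inequality from Perelman's monotone $\mathcal{W}$-entropy, using the standard dictionary between lower bounds for its scale-invariant minimum $\mu(g,\tau)$ and logarithmic Sobolev inequalities carrying the scalar-curvature potential. Recall that for closed $(M^n,h)$ and $\tau>0$, writing $u^2=(4\pi\tau)^{-n/2}e^{-f}$ turns the constraint $\mathcal{W}(h,f,\tau)\geq -C$ for all admissible $f$ into
$$\int_M u^2\log u^2\, dV_h \;\leq\; 4\tau\int_M\Big(|\grad u|_h^2 + \frac{\Sc_h}{4}u^2\Big)dV_h \;-\;\frac n2\log(4\pi\tau)\;-\;n\;+\;C$$
valid for all $u$ with $\int_M u^2\, dV_h = 1$; that is, $\mu(h,\tau)\geq -C$ is \emph{equivalent} to this single log-Sobolev inequality. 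So the theorem reduces to two things: a lower bound for $\mu(g(t),\tau)$ that is uniform in $t\in[0,T)$ for $\tau$ in a suitable range, and the classical implication ``a $\tau$-family of such log-Sobolev inequalities $\Rightarrow$ the Sobolev inequality''.

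For the uniform $\mu$-bound I would first invoke Perelman's monotonicity of $\mathcal{W}$ along the flow coupled to the conjugate heat equation: for fixed $\tau_0$ the quantity $t\mapsto \mu(g(t),\tau_0-t)$ is nondecreasing, which after relabelling gives $\mu(g(t),\tau)\geq \mu(g_0,\tau+t)$ for all $t\in[0,T)$, $\tau>0$. It then remains to bound $\mu(g_0,\sigma)$ from below for $\sigma\in(0,\tau_0+T]$. Starting from the initial Sobolev inequality with constant $C_S(M,g_0)$, an application of Jensen's inequality (together with $\int_M\Sc_{g_0}u^2\geq(\inf_M\Sc_{g_0})\int_M u^2$ to install the potential term) yields an initial logarithmic Sobolev inequality with exactly the $-\tfrac n2\log\sigma$ dependence required above, hence $\mu(g_0,\sigma)\geq -C_0$ for all small $\sigma$, with $C_0$ depending only on $n$, $C_S(M,g_0)$, $\inf_M\Sc_{g_0}$ and a lower bound for $\vol(M,g_0)$. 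The scaling identity $\mu(\lambda h,\lambda\sigma)=\mu(h,\sigma)$, together with $T<\infty$, extends this to $\sigma\in(0,\tau_0+T]$ at the cost of a $T$-dependence in the constant, and thus $\mu(g(t),\tau)\geq -C_1$ for all $t\in[0,T)$ and all $\tau\in(0,\tau_0]$.

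The last step --- turning the resulting one-parameter family of logarithmic Sobolev inequalities (each carrying the potential $\tfrac14\Sc_{g(t)}u^2$ and an additive constant of size $\sim -\tfrac n2\log\tau$) into the single Sobolev inequality of the statement --- is where the real work lies, and I expect it to be the main obstacle. This is the equivalence between Gross-type log-Sobolev inequalities with the sharp dimensional $\tau$-dependence and Nash/Sobolev-type inequalities (in the flat setting due to Davies and Beckner--Pearson; carried out along Ricci flow by Ye \cite{RGY} and Zhang \cite{QSZ}). One point needing care is that the ``energy'' $\int_M(|\grad u|_{g(t)}^2+\tfrac14\Sc_{g(t)}u^2)\,dV_{g(t)}$ is not a priori nonnegative; this is handled by first adding a sufficiently large multiple of $\int_M u^2\,dV_{g(t)}$ (so that the Davies/semigroup iteration applies verbatim) and removing it at the end, the extra constant being controlled since $T<\infty$. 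By contrast, Perelman's monotonicity and the $\mu$--log-Sobolev dictionary are, for the purposes of this proof, routine inputs; the real care goes into the book-keeping that keeps every constant dependent only on the quantities listed in (i)--(iii).
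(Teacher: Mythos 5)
The paper does not prove Theorem \ref{SobolevZhangYe}: it states it as a cited result of Ye \cite{RGY} and Zhang \cite{QSZ,QSZ2}, so there is no internal argument to compare against. Your proposal, however, is exactly the strategy those references employ: use the monotonicity $t\mapsto\mu(g(t),\tau_0-t)$ to reduce to a lower bound on $\mu(g_0,\sigma)$; obtain that lower bound from $C_S(M,g_0)$, $\inf_M\Sc_{g_0}$ and a volume lower bound via Jensen's inequality; then run the Davies/Beckner--Pearson machinery to convert the resulting one-parameter family of logarithmic Sobolev inequalities (with the correct $-\tfrac n2\log\tau$ normalisation) into the single Sobolev inequality carrying the $\tfrac14\Sc_{g(t)}u^2$ potential. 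Your $\mu$-to-log-Sobolev dictionary display is correct, and you rightly single out the log-Sobolev-to-Sobolev conversion (and the handling of the possibly indefinite energy) as the genuinely technical part.

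One imprecision worth flagging: the scaling identity $\mu(\lambda h,\lambda\sigma)=\mu(h,\sigma)$ relates different \emph{metrics}, not different values of $\sigma$ for a fixed metric, so it does not by itself extend the lower bound on $\mu(g_0,\cdot)$ from small $\sigma$ to $\sigma\in(0,\tau_0+T]$. The standard remedy is either to note that the log-Sobolev inequality at scale $\sigma_0$, together with $\Sc_{g_0}\geq\inf_M\Sc_{g_0}$, implies it at every $\sigma>\sigma_0$ with an additive constant depending on $\sigma-\sigma_0$ and $\inf_M\Sc_{g_0}$ (so $T<\infty$ controls it), or to invoke that $\sigma\mapsto\mu(g_0,\sigma)$ is bounded below on compact subintervals of $(0,\infty)$ for a fixed closed $(M,g_0)$. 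With that adjustment your outline is essentially the Ye/Zhang proof; it is also what makes the volume lower bound in (i) appear among the constants, since it enters in controlling the constant-function contribution when passing from the initial Sobolev inequality to the initial log-Sobolev inequality.
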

Using this, they also showed ,  Ye,  Theorem 1.7 in \cite{RGY},  and Zhang, Corollary 1 in  \cite{QSZ} (and the errata for this in \cite{QSZ2}),   that if a closed Ricci-Flow solution  $(M,g(t))_{t\in [0,T)}$  satisfies  $|\Sc|(\cdot,t_0)\leq c_0< \infty$ on $B_{g(t_0)}(x_0,r_0)$ 
for some $t_0 \in [0,T)$,  that then {\it  $B_{g(t_0)}(x_0,r_0)$   is $\si_0$ non-collapsed} (with respect  to the metric $g(t_0)$) , where $\si_0 = \si_0(A,B,n) >0$ is a positive constant depending only on $A$ and $B$ and the dimension  $n$: here,  {\it $B_{g(t_0)}(x_0,r_0)$  is  $\si_0$ non-collapsed}    means that $\vol_{g(t_0)}( B_{g(t_0)}(x_0,r)) \geq \si_0 r^n$ for all $0<r\leq \min(r_0,1)$.

Under the assumption  that     $\int_{B_{g(t_0)}(x_0,R_0)} |\Sc_{g(t_0)}|^{\frac n 2}dV_{g(t_0)} \leq \ep_0 $ for some  $t_0\in [0,T)$ for some $R_0>0$ and some $x_0 \in M$,  where $\ep_0 = \ep_0(A,B,n)$ is small enough,  $A$ and $B$   as in Theorem \ref{SobolevZhangYe},  
  Hamanaka,  Lemma $2.5$ in  \cite{SHA},  modified  the arguments of Zhang and Ye,  
  to show that $\vol_{g(t_0)}(B_{g(t_0)}(x_0,r)) \geq \si_0 r^n ,$  for all $r\leq \min(1,R_0)$  where   $\si_0$,  depends only on $A,B,n$.
In the case that $\int_{B_{g(t_0)}(x_0,R_0)} |\Sc_{g(t_0)}|^{\si + \frac n 2}dV_{g(t_0)} \leq L $ for some $L>0, 0<\si<1$ we use a similar argument to those mentioned above to obtain a non-collapsing result.

\begin{lem}\label{noncollapselemma} Let $(M,g)$ be a smooth complete $n$-dimensional Riemannian manifold. Suppose that there exist positive constants $A$ and $B$ such that the Sobolev inequality
\begin{equation}\label{0527Sobolev1}
(\int_M |u|^{\frac{2n}{n-2}} dV_g)^{\frac{n-2}{n}}\leqslant A\int_M ( |\nabla u|^2_g+\frac{\Sc_g}{4}u^2) dV_g+B\int_M u^2 dV_g
\end{equation}
holds true for all $u\in W^{1,2}(M)$. If $\int_{B_g(x,r)}|\Sc_g|^{\frac{n}{2}+\sigma}dV_g\leqslant L$ on a geodesic ball $B_g(x,r)$ with positive constants $r$, $L$ and $\sigma$ satisfying $\frac{A}{4}L^{\frac{2}{n+2\sigma}}r^{\frac{4\sigma}{n+2\sigma}}<1$. Then there holds
\begin{equation}\label{0527noncoll}
\Vol_g(B_g(x,r))\geqslant (\frac{1}{2^{n+3}\tilde{A}+2\tilde{B}r^2})^{\frac{n}{2}}r^n,
\end{equation}
where $\tilde{A}=\frac{A}{1-\frac{A}{4}L^{\frac{2}{n+2\sigma}}r^{\frac{4\sigma}{n+2\sigma}}}$ and $\tilde{B}=\frac{B}{1-\frac{A}{4}L^{\frac{2}{n+2\sigma}}r^{\frac{4\sigma}{n+2\sigma}}}.$
If we further restrict $r$ to 
$r \leq  \min(1,\frac{2^{n+2\si}{4\si}}{A^{\frac{n+2\si}{4\si}} L^{\frac{1}{2\si} } }) $ then we obtain
 \begin{equation}\label{0527noncoll2}
\Vol_g(B_g(x,r))\geqslant (\frac{1}{2^{n+4}A+4B})^{\frac{n}{2}}r^n,
\end{equation}
\end{lem}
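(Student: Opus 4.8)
The plan is to reduce \eqref{0527noncoll} to a \emph{local Faber--Krahn type inequality} supported in $B_g(x,r)$, obtained by absorbing the indefinite scalar curvature term in \eqref{0527Sobolev1} against the $L^{\frac n2+\sigma}$ bound, and then to propagate that inequality down the dyadic scales, using the Euclidean volume asymptotics of the smooth manifold $(M,g)$ at scale zero to fix the constant.

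\emph{Step 1: absorbing the curvature term.} Let $\phi\in C_c^\infty(B_g(x,r))$. The only term in \eqref{0527Sobolev1} without a sign is $\tfrac A4\int_M\Sc_g\,\phi^2\le\tfrac A4\int_{B_g(x,r)}|\Sc_g|\,\phi^2$. Apply H\"older with exponents $p=\tfrac n2+\sigma$ and $p'=\tfrac{p}{p-1}$; since $\sigma>0$ one has $p'<\tfrac n{n-2}$ and hence $2p'<\tfrac{2n}{n-2}$, so a second H\"older on the finite-measure set $B_g(x,r)$ gives
\[
\tfrac A4\int_M\Sc_g\,\phi^2\ \le\ \tfrac A4\,L^{\frac2{n+2\sigma}}\big(\vol_g(B_g(x,r))\big)^{\frac{4\sigma}{n(n+2\sigma)}}\Big(\int_M|\phi|^{\frac{2n}{n-2}}\Big)^{\frac{n-2}{n}},
\]
where the identity $\tfrac1{p'}-\tfrac{n-2}{n}=\tfrac{4\sigma}{n(n+2\sigma)}$ is exactly what turns the power of $L$ into $L^{\frac2{n+2\sigma}}$ and lets the volume exponent match $r^{\frac{4\sigma}{n+2\sigma}}$. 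If $\vol_g(B_g(x,r))>r^n$ then \eqref{0527noncoll} holds trivially (the right-hand constant being $\le1$). Otherwise $\vol_g(B_g(x,\rho))\le\vol_g(B_g(x,r))\le r^n$ for \emph{every} $\rho\le r$, so the coefficient above is at most $\tfrac A4L^{\frac2{n+2\sigma}}r^{\frac{4\sigma}{n+2\sigma}}=:\theta<1$ by hypothesis; absorbing the corresponding multiple of $\big(\int|\phi|^{2n/(n-2)}\big)^{(n-2)/n}$ to the left yields, for all $\phi\in C_c^\infty(B_g(x,\rho))$ with $\rho\le r$,
\[
\Big(\int_M|\phi|^{\frac{2n}{n-2}}\Big)^{\frac{n-2}{n}}\ \le\ \tilde A\int_M|\grad\phi|_g^2+\tilde B\int_M\phi^2,
\]
with $\tilde A=\tfrac A{1-\theta}$, $\tilde B=\tfrac B{1-\theta}$ as in the statement.

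\emph{Step 2: iteration.} For $\rho\le r$ feed into this Faber--Krahn inequality the cut-off $\phi_\rho$ that equals $1$ on $B_g(x,\rho/2)$, vanishes outside $B_g(x,\rho)$ and satisfies $|\grad\phi_\rho|_g\le\tfrac2\rho$; writing $w(\rho):=\vol_g(B_g(x,\rho))\,\rho^{-n}$ one obtains a recursion of the shape $w(\rho/2)^{\frac{n-2}{n}}\le C_\ast\,w(\rho)$ with $C_\ast$ a fixed dimensional multiple of $\tilde A+\tilde B r^2$. Telescoping down the scales $\rho,\rho/2,\rho/4,\dots$ and summing the resulting geometric series, whose total exponent is $\big(1-\tfrac{n-2}{n}\big)^{-1}=\tfrac n2$, shows that $w(r)$ cannot be smaller than a fixed constant times $(\tilde A+\tilde B r^2)^{-n/2}$: otherwise the recursion would force $w(\rho/2^k)\to0$, contradicting $w(\rho)\to\omega_n>0$ as $\rho\downarrow0$, which holds because $(M,g)$ is smooth. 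Careful bookkeeping of the dyadic factors produces the constant $2^{n+3}\tilde A+2\tilde B r^2$ in \eqref{0527noncoll}. For \eqref{0527noncoll2}, the extra restriction on $r$ is tuned precisely so that $\theta\le\tfrac12$, whence $\tilde A\le2A$ and $\tilde B\le2B$, giving $2^{n+3}\tilde A+2\tilde Br^2\le2^{n+4}A+4Br^2\le2^{n+4}A+4B$ since $r\le1$.

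\emph{Expected main difficulty.} The analytic input (Sobolev $+$ two H\"older steps) is routine once the exponents are lined up; the delicate point is the propagation to the full ball and down to scale zero. One must be sure the absorption of Step 1 is valid \emph{at every scale entering the iteration} --- which is exactly why the otherwise trivial dichotomy $\vol_g(B_g(x,r))\le r^n$ versus $>r^n$ has to be made --- and then control the accumulated dyadic constants sharply enough that the exponent is precisely $n/2$ and the constant does not exceed $2^{n+3}\tilde A+2\tilde B r^2$ (respectively $2^{n+4}A+4B$). The rest is bookkeeping.
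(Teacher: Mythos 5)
Your Step 1 is essentially the paper's absorption argument in disguise: the paper rescales $\bar g=r^{-2}g$ and bounds the curvature term by the same two H\"older steps, using the contradiction hypothesis $\Vol_{\bar g}(B_{\bar g}(x,1))<\delta_r<1$ exactly where you use the dichotomy $\Vol_g(B_g(x,r))\leq r^n$; your exponent bookkeeping ($\tfrac1{p'}-\tfrac{n-2}{n}=\tfrac{4\sigma}{n(n+2\sigma)}$) is correct. Where you genuinely diverge is Step 2: the paper does not iterate over scales at all. It first absorbs the zeroth-order term as well (using the assumed volume smallness, which is why the combination $2^{n+3}\tilde A+2\tilde Br^2$ appears in $\delta_r$), obtains the pure inequality $(\int u^{2n/(n-2)})^{(n-2)/n}\leq 2A_0\int|\nabla u|^2$ for all test functions in the unit ball, converts this into the Faber--Krahn eigenvalue bound $\Vol^{2/n}(\Omega)\lambda_1(\Omega)\geq\tfrac1{2A_0}$, and then quotes Carron's result (Lemma \ref{Faber}) to get the volume lower bound and the contradiction. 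Your concentric-ball iteration anchored at $\Vol_g(B_g(x,\rho))\rho^{-n}\to\omega_n$ replaces the appeal to Carron by an inline proof of the same type of statement; that is a legitimate, self-contained alternative, and it avoids the eigenvalue formulation and the contradiction setup entirely.

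The concrete problem is the constant you assert. With the halving scales you prescribe, the cutoff from $B_g(x,\rho)$ to $B_g(x,\rho/2)$ gives $\Vol(\rho/2)^{\frac{n-2}{n}}\leq\big(4\tilde A\rho^{-2}+\tilde B\big)\Vol(\rho)$, and telescoping with weights $\theta^j$, $\theta=\tfrac{n-2}{n}$ (so $\sum_j\theta^j=\tfrac n2$, $\sum_j j\theta^j=\tfrac{n(n-2)}4$), yields $\Vol_g(B_g(x,r))\geq\big(2^{n}\tilde A+2^{n-2}\tilde Br^2\big)^{-n/2}r^n$, not $\big(2^{n+3}\tilde A+2\tilde Br^2\big)^{-n/2}r^n$. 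For $n\geq4$ the coefficient $2^{n-2}$ on $\tilde Br^2$ exceeds $2$, so when $B$ dominates $A$ your bound is strictly weaker than \eqref{0527noncoll} and does not yield \eqref{0527noncoll2} with the stated constant $2^{n+4}A+4B$ (you get $2^{n+1}A+2^{n-1}B$ instead); so ``careful bookkeeping'' of the dyadic factors does not in fact produce the claimed constants. The repair is cheap: iterate with ratio $\lambda=2^{-1/(n-2)}$ rather than $\tfrac12$, so that $\lambda^{n-2}=\tfrac12$; the same telescoping then gives the constant $\tfrac{2\tilde A}{(1-\lambda)^2}+2\tilde Br^2$, and since $1-2^{-1/(n-2)}\geq\tfrac1{2(n-2)}$ one has $\tfrac{2}{(1-\lambda)^2}\leq8(n-2)^2\leq2^{n+3}$, recovering \eqref{0527noncoll} exactly (alternatively, any explicit constant of this shape would suffice for the paper's later applications). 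One smaller point, shared with the paper: your ``trivial'' branch $\Vol_g(B_g(x,r))>r^n$ needs $2^{n+3}\tilde A+2\tilde Br^2\geq1$, just as the paper's proof tacitly assumes $\delta_{r_0}<1$; this is harmless, since the validity of \eqref{0527Sobolev1} on a smooth manifold forces $A$ to be at least the sharp Euclidean Sobolev constant (test with concentrated bumps), but it deserves a sentence in either argument.
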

%\begin{rem}
%If $\int_{B_g(x,r)}|\Sc_g|^{\frac{n}{2}+\sigma}dV_g\leqslant \tilde{L}$ on a geodesic ball $B_g(x,r)$ with $0<r<1$ and $\tilde{L}\geqslant1$, we consider $\bar{g}=K^2g$ with %$K>1$. Then $\bar{g}$ also satisfies the Sobolev inequality 
%\begin{equation}\label{05270001}
%\begin{split}
%(\int_M |u|^{\frac{2n}{n-2}} dV_{\bar{g}})^{\frac{n-2}{n}}&\leqslant A\int_M ( |\nabla u|^2_{\bar{g}}+\frac{\Sc_{\bar{g}}}{4}u^2) dV_{\bar{g}}+\frac{B}{K^2}\int_M u^2 %dV_{\bar{g}}\\
%&\leqslant A\int_M ( |\nabla u|^2_{\bar{g}}+\frac{\Sc_{\bar{g}}}{4}u^2) dV_{\bar{g}}+B\int_M u^2 dV_{\bar{g}}.
%\end{split}
%\end{equation}
%Moreover, on the geodesic ball $B_{\bar{g}}(x,r)$ with $0<r<1$, we have
%\begin{equation}\label{05270002}
%\int_{B_{\bar{g}}(x,r)}|\Sc_{\bar{g}}|^{\frac{n}{2}+\sigma}dV_{\bar{g}}=\frac{1}{K^{2\sigma}}\int_{B_g(x,\frac{r}{K})}|\Sc_g|^{\frac{n}{2}+\sigma}dV_g\leqslant\frac{1}
%{K^{2\sigma}}\int_{B_g(x,r)}|\Sc_g|^{\frac{n}{2}+\sigma}dV_g\leqslant\frac{\tilde{L}}{K^{2\sigma}}.
%\end{equation}
%\end{rem}
\begin{proof}  Assume that $\int_{B_g(x_0,r_0)}|\Sc _g|^{\frac{n}{2}+\sigma}dV_g \leqslant L$ on a closed geodesic ball $B_g(x_0,r_0)$ with positive constants $r_0$, $L$ and $\sigma$ satisfying $\frac{A}{4}L^{\frac{2}{n+2\sigma}}r_0^{\frac{4\sigma}{n+2\sigma}}<1$, but estimate $(\ref{0527noncoll})$ does not hold, that is,
\begin{equation}\label{0527201}
\Vol_g(B_g(x_0,r_0))< \delta_{r_0}r_0^n,
\end{equation}
where $\delta_{r_0}=(\frac{1}{2^{n+3}A_0+2B_0r_0^2})^{\frac{n}{2}}$, $A_0=\frac{A}{1-\frac{A}{4}L^{\frac{2}{n+2\sigma}}r_0^{\frac{4\sigma}{n+2\sigma}}}$ and $B_0=\frac{B}{1-\frac{A}{4}L^{\frac{2}{n+2\sigma}}r_0^{\frac{4\sigma}{n+2\sigma}}}$. 

We next derive a contradiction. Set $\bar{g}=\frac{1}{r_0^2}g$. Then we have for $\bar{g}$,
\begin{equation}\label{0527202}
\Vol_{\bar{g}}(B_{\bar{g}}(x_0,1))< \delta_{r_0}<1
\end{equation}
and 
\begin{equation}\label{0527203}
\int_{B_{\bar{g}}(x_0,1)}|\Sc _{\bar{g}}|^{\frac{n}{2}+\sigma} dV_{\bar{g}}=r_0^{2\sigma}\int_{B_{g}(x_0,r_0)}|\Sc _g|^{\frac{n}{2}+\sigma} dV_{g}\leqslant Lr_0^{2\sigma}
\end{equation}
on $B_{\bar{g}}(x_0,1)$. Moreover, the Sobolev inequality $(\ref{0527Sobolev1})$ for $\bar{g}$ is 
\begin{equation}\label{0527204}
(\int_M |u|^{\frac{2n}{n-2}} dV_{\bar{g}})^{\frac{n-2}{n}}\leqslant A\int_M ( |\nabla u|^2_{\bar{g}}+\frac{\Sc_{\bar{g}}}{4}u^2) dV_{\bar{g}}+Br_0^2\int_M u^2 dV_{\bar{g}}.
\end{equation}
For $u\in C^\infty (M)$ with support contained in $B_{\bar{g}}(x_0,1)$, we then have
\begin{equation*}\label{0527205}
(\int_{B_{\bar{g}}(x_0,1)} |u|^{\frac{2n}{n-2}} dV_{\bar{g}})^{\frac{n-2}{n}}\leqslant A\int_{B_{\bar{g}}(x_0,1)} ( |\nabla u|^2_{\bar{g}}+\frac{\Sc_{\bar{g}}}{4}u^2) dV_{\bar{g}}+Br_0^2\int_{B_{\bar{g}}(x_0,1)} u^2 dV_{\bar{g}}.
\end{equation*}
By using H\"older's inequality and inequality $(\ref{0527203})$, we have
\begin{equation}\label{0527206}
\begin{split}
\int_{B_{\bar{g}}(x_0,1)} \Sc_{\bar{g}}u^2 dV_{\bar{g}}&\leqslant (\int_{B_{\bar{g}}(x_0,1)}|\Sc_{\bar{g}}|^\frac{n+2\sigma}{2} dV_{\bar{g}})^{\frac{2}{n+2\sigma}}(\int_{B_{\bar{g}}(x_0,1)}u^\frac{2n+4\sigma}{n+2\sigma-2} dV_{\bar{g}})^{\frac{n+2\sigma-2}{n+2\sigma}}\\
&\leqslant L^{\frac{2}{n+2\sigma}}r_0^{\frac{4\sigma}{n+2\sigma}}(\int_{B_{\bar{g}}(x_0,1)}u^\frac{2n+4\sigma}{n+2\sigma-2} dV_{\bar{g}})^{\frac{n+2\sigma-2}{n+2\sigma}}\\
&\leqslant L^{\frac{2}{n+2\sigma}}r_0^{\frac{4\sigma}{n+2\sigma}}\left(\left(\int_{B_{\bar{g}}(x_0,1)}u^{2p\frac{n+2\sigma}{n+2\sigma-2}} dV_{\bar{g}}\right)^{\frac{1}{p}}\left(\Vol_{\bar{g}}(B_{\bar{g}}(x_0,1))\right)^{\frac{p-1}{p}}\right)^{\frac{n+2\sigma-2}{n+2\sigma}}\\
&\leqslant L^{\frac{2}{n+2\sigma}}r_0^{\frac{4\sigma}{n+2\sigma}}\left(\int_{B_{\bar{g}}(x_0,1)}u^{\frac{2n}{n-2}} dV_{\bar{g}}\right)^{\frac{n-2}{n}}, 
\end{split}
\end{equation}
where we take $p=\frac{n}{n-2}\cdot\frac{n+2\sigma-2}{n+2\sigma}>1,$ and used 
\eqref{0527202} in the third inequality.  

Then
\begin{equation}\label{05272061}
\begin{split}
(\int_{B_{\bar{g}}(x_0,1)} |u|^{\frac{2n}{n-2}} dV_{\bar{g}})^{\frac{n-2}{n}}&\leqslant \frac{1}{1-\frac{A}{4}L^{\frac{2}{n+2\sigma}}r_0^{\frac{4\sigma}{n+2\sigma}}}\left(A\int_{B_{\bar{g}}(x_0,1)} |\nabla u|^2_{\bar{g}}dV_{\bar{g}}+Br_0^2\int_{B_{\bar{g}}(x_0,1)} u^2 dV_{\bar{g}}\right)\\
&\leqslant A_0\int_{B_{\bar{g}}(x_0,1)} |\nabla u|^2_{\bar{g}}dV_{\bar{g}}+B_0r_0^2\int_{B_{\bar{g}}(x_0,1)} u^2 dV_{\bar{g}}.
\end{split}
\end{equation}
Using H\"older's inequality and inequality \eqref{0527202} again, we have
\begin{equation}\label{05272060}
\begin{split}
\int_{B_{\bar{g}}(x_0,1)} u^2 dV_{\bar{g}}&\leqslant \Vol_{\bar{g}}^{\frac{2}{n}}(B_{\bar{g}}(x_0,1))(\int_{B_{\bar{g}}(x_0,1)}u^\frac{2n}{n-2} dV_{\bar{g}})^{\frac{n-2}{n}}\\
&< \delta_{r_0}^{\frac{2}{n}}(\int_{B_{\bar{g}}(x_0,1)}u^\frac{2n}{n-2} dV_{\bar{g}})^{\frac{n-2}{n}}.
\end{split}
\end{equation}
Hence we deduce 
\begin{equation*}\label{0527207}
\begin{split}
(\int_{B_{\bar{g}}(x_0,1)} |u|^{\frac{2n}{n-2}} dV_{\bar{g}})^{\frac{n-2}{n}}&\leqslant A_0\int_{B_{\bar{g}}(x_0,1)} |\nabla u|^2_{\bar{g}}dV_{\bar{g}}+\frac{B_0r_0^2}{2^{n+3}A_0+2B_0r_0^2}(\int_{B_{\bar{g}}(x_0,1)}u^\frac{2n}{n-2} dV_{\bar{g}})^{\frac{n-2}{n}}\\
&\leqslant A_0\int_{B_{\bar{g}}(x_0,1)} |\nabla u|^2_{\bar{g}}dV_{\bar{g}}+\frac{1}{2}(\int_{B_{\bar{g}}(x_0,1)}u^\frac{2n}{n-2} dV_{\bar{g}})^{\frac{n-2}{n}},
\end{split}
\end{equation*}
which is equivalent to  
\begin{equation}\label{0527208}
(\int_{B_{\bar{g}}(x_0,1)} |u|^{\frac{2n}{n-2}} dV_{\bar{g}})^{\frac{n-2}{n}}\leqslant 2A_0\int_{B_{\bar{g}}(x_0,1)} |\nabla u|^2_{\bar{g}}dV_{\bar{g}}.
\end{equation}
Next we consider an arbitrary domain $\Omega\subset B_{\bar{g}}(x_0,1)$. For $u\in C^\infty(\Omega)$ with support contained in $\Omega$. Denote $\bar{u}=\frac{1}{\int_{\Omega} dV_{\bar{g}}}\int_{\Omega} udV_{\bar{g}}$. Using H\"older inequality and inequality \eqref{0527208}, we have
\begin{equation}\label{0527209}
\begin{split}
\int_{\Omega} |u-\bar{u}|^{2} dV_{\bar{g}}&=\int_{\Omega} u^{2} dV_{\bar{g}}-\frac{1}{\Vol_{\bar{g}}(\Omega)}(\int_{\Omega} u dV_{\bar{g}})^2\\
&\leqslant \int_{\Omega} u^{2} dV_{\bar{g}}\leqslant \Vol^{\frac{2}{n}}_{\bar{g}}(\Omega)(\int_{\bar{\Omega}}u^\frac{2n}{n-2} dV_{\bar{g}})^{\frac{n-2}{n}}\\
&\leqslant 2A_0\Vol^{\frac{2}{n}}_{\bar{g}}(\Omega)\int_{\Omega} |\nabla u|^2_{\bar{g}}dV_{\bar{g}},
\end{split}
\end{equation}
which implies the following Faber-Krahn inequality
\begin{equation}\label{0527210}
\Vol^{\frac{2}{n}}_{\bar{g}}(\Omega)\lambda_1(\Omega)\geqslant\frac{1}{2A_0}.
\end{equation}
Here $\lambda_1(\Omega)$ denotes the first Dirchlet eigenvalue of $-\Delta_{\bar{g}}$ on $\Omega$. Using Lemma \ref{Faber} below, we have
\begin{equation}\label{0527211}
\Vol_{\bar{g}}(B_{\bar{g}}(x,\rho))\geqslant(\frac{1}{2^{n+3}A_0})^{\frac{n}{2}}\rho^n
\end{equation}
for all $B_{\bar{g}}(x,\rho)\subset B_{\bar{g}}(x,1)$. Consequently we have
\begin{equation}\label{0527211second}
(\frac{1}{2^{n+3}A_0+2B_0r_0^2})^{\frac{n}{2}}=\delta_{r_0}>\Vol_{\bar{g}}(B_{\bar{g}}(x,1))\geqslant(\frac{1}{2^{n+3}A_0})^{\frac{n}{2}},
\end{equation}
which is impossible.This finishes the proof.
\end{proof}

In the last step of the above proof, we use the following lemma which was  proved by Carron \cite{Carron}. Ye reproduced Carron's arguments in his  proof of Theorem $6.1$ in \cite{RGY}.
\begin{lem}\label{Faber} (Carron, Proposition $2.4$ in \cite{Carron}) Let $M$ be a smooth complete $n$-dimensional Riemannian manifold and $\Omega\subset M$ be an open bounded regular set. If the Faber-Krahn constant 
\begin{equation}\label{2110000001}
\Lambda_n(\Omega)=\inf\{\Vol^{\frac{2}{n}}_{g}(U)\lambda_1(U)\big|\ U\subset \Omega\ is\ bounded\ open\ set\}
\end{equation}
is positive. Then for any geodesic ball $B_g(x,r)$ contained in $\Omega$, we have
\begin{equation}\label{212}
\Vol_g(B_g(x,r))\geqslant (\frac{\Lambda_n}{2^{n+2}})^{\frac{n}{2}}r^n.
\end{equation}
\end{lem}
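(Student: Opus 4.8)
The plan is to reproduce the argument of Carron (as presented by Ye in the proof of Theorem~6.1 of \cite{RGY}): an iteration on the volumes of the concentric balls $B_g(x,r/2^k)$, fed by one application of the Faber--Krahn hypothesis at each scale.

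Fix $x$ with $B_g(x,r)\subseteq\Omega$ and set $V(\rho):=\Vol_g(B_g(x,\rho))$. Since geodesic balls are nested, $B_g(x,\rho)\subseteq\Omega$ for every $0<\rho\le r$, so each $B_g(x,\rho)$ is an admissible competitor in the infimum \eqref{2110000001} defining $\Lambda_n(\Omega)$. For $0<\rho\le r$ I would use the Lipschitz cut-off
\[
\eta_\rho(y):=\min\Big\{1,\ \tfrac{2}{\rho}\big(\rho-d_g(x,y)\big)_+\Big\},
\]
which equals $1$ on $B_g(x,\rho/2)$, vanishes outside $B_g(x,\rho)$, and satisfies $|\nabla\eta_\rho|_g\le\tfrac2\rho$ a.e.; hence $\int_M|\nabla\eta_\rho|_g^2\,dV_g\le\tfrac{4}{\rho^2}V(\rho)$ and $\int_M\eta_\rho^2\,dV_g\ge V(\rho/2)$. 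Using $\eta_\rho$ (regularised to lie in $C_c^\infty(B_g(x,\rho))$) as a test function in the variational characterisation of $\lambda_1(B_g(x,\rho))$, and then inserting the result into \eqref{2110000001} with $U=B_g(x,\rho)$, gives
\[
\Lambda_n\ \le\ V(\rho)^{\frac2n}\lambda_1(B_g(x,\rho))\ \le\ V(\rho)^{\frac2n}\cdot\frac{4\,V(\rho)}{\rho^2\,V(\rho/2)},
\qquad\text{i.e.}\qquad
V(\rho)\ \ge\ \Big(\tfrac{\Lambda_n\rho^2}{4}\Big)^{\frac{n}{n+2}}V(\rho/2)^{\frac{n}{n+2}}.
\]

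Next I would iterate this inequality along $\rho_k:=r/2^k$, $k\in\N\cup\{0\}$. Writing $\theta:=\tfrac{n}{n+2}\in(0,1)$ and taking logarithms, the recursion $\log V(\rho_k)\ge\theta\log(\Lambda_n\rho_k^2/4)+\theta\log V(\rho_{k+1})$ telescopes to
\[
\log V(r)\ \ge\ \sum_{k=0}^{N-1}\theta^{k+1}\log\!\Big(\tfrac{\Lambda_n\rho_k^2}{4}\Big)\ +\ \theta^{N}\log V(\rho_{N})\qquad\text{for every }N.
\]
The remainder $\theta^N\log V(\rho_N)$ tends to $0$ as $N\to\infty$: on a smooth manifold a small geodesic ball has almost-Euclidean volume, $V(\rho_N)=\omega_n\rho_N^n(1+o(1))$, so $\log V(\rho_N)=O(N)$ while $\theta^N\to0$. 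Letting $N\to\infty$ and using $\log(\Lambda_n\rho_k^2/4)=\log(\Lambda_n r^2/4)-2k\log2$ together with $\sum_{k\ge0}\theta^{k}=\tfrac{n+2}{2}$ and $\sum_{k\ge0}k\theta^{k}=\tfrac{n(n+2)}{4}$, the series sums to exactly $\log V(r)\ge\tfrac n2\log\!\big(\tfrac{\Lambda_n r^2}{2^{n+2}}\big)$, which is \eqref{212}.

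I expect the only genuinely delicate points to be: (i) the passage from the merely Lipschitz, compactly supported $\eta_\rho$ to the Dirichlet eigenvalue bound, which uses the density of $C_c^\infty(B_g(x,\rho))$ in $W_0^{1,2}(B_g(x,\rho))$ and the admissibility of $B_g(x,\rho)$ as a bounded open subset of $\Omega$; (ii) the vanishing of the boundary term $\theta^N\log V(\rho_N)$, which rests on nothing more than the infinitesimally Euclidean behaviour of volumes of small balls on a smooth manifold; and (iii) the bookkeeping of the two geometric-type series, so as to land on the sharp constant $2^{n+2}$ rather than a cruder one. Everything else is a direct computation.
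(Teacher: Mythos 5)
Your proof is correct, and it is essentially the argument the paper relies on: the paper does not prove Lemma \ref{Faber} itself but cites Carron (Proposition 2.4 in \cite{Carron}, reproduced by Ye in \cite{RGY}), whose iteration scheme --- apply the Faber--Krahn hypothesis to the concentric balls $B_g(x,r/2^k)$ with the standard dyadic cutoff, then telescope --- is exactly what you carry out, and your bookkeeping of the two geometric series does land on the stated constant $2^{n+2}$. The technical points you flag (regularising the Lipschitz cutoff so it is admissible for the Dirichlet eigenvalue, and the vanishing of the remainder $\theta^N\log V(\rho_N)$ via the Euclidean small-ball volume asymptotics at a fixed point of a smooth manifold) are handled correctly and are indeed the only delicate steps.
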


Combining Lemma \ref{noncollapselemma} and Theorem \ref{SobolevZhangYe}, we obtain the following. 
\begin{thm}\label{noncollapsethmcom}
Let $(M,g(t))_{t\in [0,  T)}$ be a smooth, closed solution to Ricci flow and $A,B>0$ be the constants from Theorem \ref{SobolevZhangYe}.  Assume furthermore that   
 $\int_{ B_{g(t)}(x,4r) } |\Sc_{g(t)}|^{\frac{n}{2} + \si}  dV_{g(t)} \leq L$ for some constants $0<\si<1$ and $L< \infty$ for all $t\in [0,  T)$ for some $x \in M,$ for some $r  \leq  r_0:= \min(\frac{1}{A^{\frac{n+2\si}{4\si}} L^{\frac{1}{2\si} } }, 1).$ Then we have 
 \begin{equation}\label{noncoll}
\Vol_{g(t)}(B_{g(t)}(x,r))\geqslant (\frac{1}{2^{n+4}A+4B})^{\frac{n}{2}}r^n,
\end{equation}
for all $t\in [0, T).$  
\end{thm}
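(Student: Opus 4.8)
The plan is to read this off directly from Theorem \ref{SobolevZhangYe} and the sharpened part of Lemma \ref{noncollapselemma}, applied to each time slice separately. Fix $t\in[0,T)$. First I would invoke Theorem \ref{SobolevZhangYe}: since $(M,g(t))$ is a time slice of a smooth closed Ricci flow on $[0,T)$ with $T<\infty$, that theorem produces constants $A,B>0$, depending only on a lower volume bound for $g(0)$, the Sobolev constant $C_S(M,g_0)$, $T$, $n$ and $\inf_M\Sc_{g(0)}$ and hence \emph{independent of $t$}, such that the Sobolev inequality \eqref{0527Sobolev1} holds for the complete manifold $(M,g(t))$. These are precisely the constants $A,B$ named in the statement of the theorem.

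Next I would verify the hypotheses of Lemma \ref{noncollapselemma} for the metric $g(t)$ and the geodesic ball $B_{g(t)}(x,r)$. Since $B_{g(t)}(x,r)\subseteq B_{g(t)}(x,4r)$ and the integrand $|\Sc_{g(t)}|^{\frac n2+\si}$ is nonnegative, the assumed bound on $B_{g(t)}(x,4r)$ yields $\int_{B_{g(t)}(x,r)}|\Sc_{g(t)}|^{\frac n2+\si}\,dV_{g(t)}\le L$. Moreover, the restriction $r\le r_0=\min\big(A^{-\frac{n+2\si}{4\si}}L^{-\frac1{2\si}},\,1\big)$ forces, after substituting the bound on $r$ and simplifying exponents, $\tfrac A4 L^{\frac2{n+2\si}}r^{\frac{4\si}{n+2\si}}\le\tfrac14<1$; and because $A^{-\frac{n+2\si}{4\si}}L^{-\frac1{2\si}}$ is no larger than the corresponding quantity $2^{\frac{n+2\si}{4\si}}A^{-\frac{n+2\si}{4\si}}L^{-\frac1{2\si}}$ appearing in the sharpened hypothesis of Lemma \ref{noncollapselemma}, the condition $r\le r_0$ is at least as restrictive as the range required there for estimate \eqref{0527noncoll2}. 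Hence Lemma \ref{noncollapselemma} applies, and its conclusion \eqref{0527noncoll2} gives $\Vol_{g(t)}(B_{g(t)}(x,r))\ge\big(\tfrac1{2^{n+4}A+4B}\big)^{\frac n2}r^n$. As $t\in[0,T)$ was arbitrary, this is exactly \eqref{noncoll}.

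I do not expect a genuine obstacle here; the argument is essentially bookkeeping of the constants and of the admissible range of $r$, with all the analytic content already in Lemma \ref{noncollapselemma} (via the Faber--Krahn inequality and Carron's Lemma \ref{Faber}) and in Theorem \ref{SobolevZhangYe}. The one point requiring a word of care is the compatibility of the two quoted inputs: Theorem \ref{SobolevZhangYe} is stated with the left-hand side $\int_M|u|^{\frac{2n}{n-2}}dV_{g(t)}$, whereas Lemma \ref{noncollapselemma} uses $\big(\int_M|u|^{\frac{2n}{n-2}}dV_{g}\big)^{\frac{n-2}{n}}$; in the Ricci-flow Sobolev inequality of Ye and Zhang the power $\tfrac{n-2}{n}$ belongs on the left, so these are the same inequality, and in any case the passage between the two forms only adjusts $A,B$ by controlled factors that can be absorbed into their definitions.
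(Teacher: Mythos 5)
Your argument is correct and is exactly the paper's (implicit) proof: the paper derives Theorem \ref{noncollapsethmcom} simply by combining Theorem \ref{SobolevZhangYe} on each time slice with the sharpened estimate \eqref{0527noncoll2} of Lemma \ref{noncollapselemma}, which is precisely your bookkeeping of the constants and of the admissible range of $r$ (including the harmless passage from $B_{g(t)}(x,4r)$ to $B_{g(t)}(x,r)$ and the missing exponent $\frac{n-2}{n}$ in the displayed Sobolev inequality, which is indeed just a typo in the statement of Theorem \ref{SobolevZhangYe}).
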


Qi Zhang also studied the non-inflating case in his paper  \cite{QSZ1}. There  he showed a local non-inflating result in the case that 
the scalar curvature satisfies some local time dependent upper and lower   bounds  :  See Theorem 1.1  in \cite{QSZ1}. 
If ($M,g(t))_{t\in [0,T)}$ is smooth, closed, $T< \infty$  and $ |\Sc(\cdot,t)| \leq c_0< \infty $ for all $t\in [0,T),$ then the result of Qi Zhang shows that there exists a $\si_1>0$ such that $M\times [0,T)$ is $\si_1$  non-inflating, that is $\vol_{g(t)}(B_{g(t)}(x,r)) \leq \si_1r^n$ for all $t\in  [0,T),0<r\leq 1.$
 Hamanaka, also showed (see Lemma 2.2 in \cite{SHA})  that    $M\times [0,T)$ is $\si_1$  non-inflating for some $\si_1>0,$
 if one assumes   an  $L^{\frac  n 2}$ bound on the Scalar curvature,   $\int_M  |\Sc_{g(t)}|^{\frac n 2}dV_{g(t)} \leq E< \infty$ for all $t\in [0,T), $  and a bound of the type 
$\int_0^{T} \sup_{p\in M}  |\Sc_{g(t)}| dt  < \infty. $ 
Later, Bamler (Theorem 8.1 of \cite{Bam}) showed that non-inflating estimates always hold if $M$ is closed.  

\begin{thm}\label{noninthm} (Bamler,  Theorem  8.1 \cite{Bam})
Let $(M,g(t))_{t\in[0,  T)}$ with $  T<\infty$ be a smooth solution to Ricci flow on a closed $n$-dimensional Riemannian manifold, with $\Sc(g(t)) \geq -1$.
Then  $\Vol_{g(t_0)}(B_{g(t_0)}(x,r))\leq  C(n)r^n $ for all $r\in (0,\sqrt{t_0} ),$  
\end{thm}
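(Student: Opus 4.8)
The plan is to deduce the non-inflating bound from Perelman's monotonicity of the reduced volume together with the scalar curvature lower bound $\Sc(g(t)) \geq -1$, following Bamler's Theorem 8.1 in \cite{Bam}; since we are only allowed to invoke results stated earlier in the excerpt, I will reproduce the argument in the self-contained form that Bamler gives. First I would fix a point $(x,t_0)$ and a radius $r \in (0,\sqrt{t_0})$. The natural strategy is to compare the volume of $B_{g(t_0)}(x,r)$ to the value of a heat kernel $u(\cdot,s) = (4\pi(t_0-s))^{-n/2} e^{-f}$ based at $(x,t_0)$, or equivalently to compare with Perelman's $\mathcal{W}$-entropy / reduced volume $\tilde V_{(x,t_0)}(s)$. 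The key monotonicity is that $s \mapsto \tilde V_{(x,t_0)}(s)$ is non-decreasing in $s \upto t_0$, with limiting value $1$ as $s \upto t_0$, which in particular gives the two-sided bound $0 < \tilde V_{(x,t_0)}(s) \leq 1$ for all $s < t_0$.

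The core of the argument then runs as follows. One chooses the comparison time $s = t_0 - r^2$ (legitimate since $r^2 < t_0$, so $s \geq 0$ lies in the flow interval) and estimates the reduced volume integral $\tilde V_{(x,t_0)}(t_0 - r^2) = (4\pi r^2)^{-n/2}\int_M e^{-\ell(y,r^2)}\,dV_{g(t_0-r^2)}(y)$ from below by the contribution of points $y$ whose reduced distance $\ell(y, r^2)$ is bounded by a dimensional constant — heuristically, the points that are "reachable" by an $\mathcal{L}$-geodesic of length comparable to $r$. The scalar curvature lower bound $\Sc \geq -1$ enters precisely here: it controls the $\mathcal{L}$-length functional from below (the $\mathcal{L}$-length contains a $\int \sqrt{\tau}\,\Sc$ term), so that $\mathcal{L}$-geodesics emanating from $(x,t_0)$ do not shrink distances too badly, and hence a definite $g(t_0)$-ball, say $B_{g(t_0)}(x, c(n) r)$, is contained in the set $\{\ell(\cdot, r^2) \leq c(n)\}$. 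Combined with $\tilde V_{(x,t_0)}(t_0 - r^2) \leq 1$ this yields $\vol_{g(t_0)}(B_{g(t_0)}(x, c(n) r)) \leq C'(n) r^n$, and a covering/rescaling argument removes the constant $c(n)$ from the radius to give $\vol_{g(t_0)}(B_{g(t_0)}(x,r)) \leq C(n) r^n$.

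I expect the main obstacle to be making rigorous the comparison between the $g(t_0)$-ball at the final time and the sublevel set of the reduced distance at the earlier time $t_0 - r^2$: one must control how the metric $g$ and the distance function distort along the backward flow over a time interval of length $r^2$, using only $\Sc \geq -1$ (no two-sided curvature bound is available), and then verify that the distortion is absorbed by a dimensional constant. This is where the $\Sc \geq -1$ hypothesis and the specific structure of Perelman's $\mathcal{L}$-geometry are indispensable; the normalization $r < \sqrt{t_0}$ is exactly what guarantees the comparison time $t_0 - r^2$ stays inside $[0,T)$. Everything else — the monotonicity of $\tilde V$, the $\ell \to 0$ behavior as $s \upto t_0$, and the final covering argument — is standard once this localization step is in place. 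For a fully detailed account we refer to \cite{Bam}, Theorem 8.1; here we have only indicated the mechanism, since in the sequel we anyway re-derive the non-inflating estimate in our setting by the independent method of Section \ref{NCNE}.
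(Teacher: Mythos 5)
Your argument hinges on the claim that $\Sc_{g(t)}\geq -1$ alone forces a definite ball $B_{g(t_0)}(x,c(n)r)$ into the sublevel set $\{\ell(\cdot,r^2)\leq c(n)\}$ of Perelman's reduced distance based at $(x,t_0)$. That is exactly the step that cannot be carried out with only a scalar lower bound: to bound $\ell(y,r^2)$ from above you must exhibit an $\mathcal{L}$-path from $(x,t_0)$ to $(y,t_0-r^2)$ with bounded $\mathcal{L}$-length $\int_0^{r^2}\sqrt{\tau}\,\big(|\gamma'|^2_{g(t_0-\tau)}+\Sc_{g(t_0-\tau)}\big)\,d\tau$, and neither term is under control: the kinetic term is measured in the earlier-time metrics, whose distortion relative to $g(t_0)$ is not governed by a scalar lower bound (no Ricci or two-sided curvature bound is available), and the potential term would require an \emph{upper} bound on $\Sc$, which is not assumed. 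So the localization step you yourself flag as ``the main obstacle'' is the entire content of the theorem, and the mechanism you invoke (reduced-volume monotonicity, which classically yields the non-collapsing, i.e.\ \emph{lower}, volume bound) does not produce the non-inflating upper bound; Bamler's actual proof of Theorem 8.1 goes through conjugate heat kernel and Nash-entropy estimates (H-centres), not reduced volume, and deferring the key step to \cite{Bam} makes your proposal a citation rather than a proof. A smaller point: the final passage from radius $c(n)r$ to $r$ needs either $r/c(n)<\sqrt{t_0}$ or a bound on the number of covering balls, neither of which you have without further volume information.

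For comparison, the paper itself does not reprove Bamler's general statement either: Theorem \ref{noninthm} is quoted from \cite{Bam}. What the paper proves self-containedly is the non-inflating estimate under the extra hypotheses of \ref{claim}, namely an integral bound $\int_Z|\Sc_{g(t)}|^{n/2}dV_{g(t)}\leq E$ and $\kappa$-non-collapsedness of $B_{g(t_0)}(x,1)$. That proof uses Qi Zhang's two bounds for the fundamental solution $G$ of the conjugate heat equation: the pointwise lower bound \eqref{202002001} and the integral upper bound \eqref{2020020002}. One integrates the lower bound over $B_{g(t_0)}(x_0,r)$ with $l=t_0-r^2$, applies Jensen's inequality to the exponential involving $\int_{t_0-r^2}^{t_0}\sqrt{t_0-s}\,\Sc_{g(s)}ds$, and controls that exponent via H\"older, the $L^{n/2}$ bound and the non-collapsing hypothesis, arriving at \eqref{nonin}. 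If you want a route consistent with the paper, either cite \cite{Bam} for the general statement as the paper does, or adopt this conjugate heat kernel argument under the additional hypotheses: the kernel bounds already encode the space-time localization that your $\mathcal{L}$-geodesic construction cannot supply under $\Sc\geq-1$ alone.
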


 We present here a short proof in the setting we are considering, which involves   refining some 
 arguments and estimates   of  Qi Zhang  \cite{QSZ1}  in the setting we are considering, but is otherwise self-contained.  \\
\begin{emptythm} \label{claim} We assume: \\
{\rm 1)   $(M,g(t))_{t\in[0,  T)}$ with $  T<\infty$ is a smooth solution to Ricci flow on a closed $n$-dimensional Riemannian manifold with $\Sc \geq -1$ ,    $Z \subseteq M$ is an open     subset of $M$   with  $\int_{Z}|\Sc_{g(t)}|^{\frac{n}{2}}dV_{g(t)}\leqslant E$  for all $t\in[0, T)$. \\
 2) $B_{g(t_0)}(x,1)$   is $\kappa$ non-collapsed, that is, $\Vol_{g(t_0)}(B_{g(t_0)}(x,r))\geqslant\kappa r^n$ or some $\kappa>0$ for all $r\in (0,1), $ and  $B_{g(t_0)}(x,r) \subseteq Z$.\\
 Then we have
\begin{equation}\label{nonin}
\Vol_{g(t_0)}(B_{g(t_0)}(x,r))\leqslant \Big(1+C_0(1+r^2)^{\frac{n}{2}}\Big)c_1^{-1}J^{-1}(T)e^{2c_2+\frac{2e^{\frac{2B}{n}r^2}E^{\frac{2}{n}}}{3\kappa^{\frac{2}{n}}}}r^n,
\end{equation}
for all $r\in (0,\sqrt{t_0} ),$ 
where $J(s)>0$  is monotone decreasing and defined in $(\ref{2020020001})$ for arbitrary $s>0$,   the constants $c_1,c_2,C_0,$  are those appearing in   $(\ref{202002001})$ and  $(\ref{2020020002}),$   $B$ is the lower bound of the scalar curvature $\Sc_{g(0)},$ and $c_1,c_2,C_0,J^{-1}(T)$  depend only on $(M,g(0))$ and $n$.}
\end{emptythm}
{\it A proof of the non-inflating estimate in this setting  :}\\
The starting point of the proof are the following two heat kernel estimates of  Qi Zhang. Qi Zhang considered the conjugate heat equation coupled with the Ricci flow
\begin{equation}\label{202002002}
\Delta_{g} u-\Sc_g u+ \frac{\partial u}{\partial l}=0, 
\end{equation}
 and proved  ((1.3)   of  \cite{QSZ1})  the following lower bound for 
  $G(z,l;x,t),$ the fundamental solution of this equation, 
\begin{equation}\label{202002001}
G(z,l;x,t)\geqslant\frac{c_1J(t)}{(t-l)^{\frac{n}{2}}}e^{-2c_2\frac{d_t^2(z,x)}{t-l}}e^{-\frac{1}{\sqrt{t-l}}\int_l^t\sqrt{t-s}\Sc_{g(s)}ds},
\end{equation}
where   
\begin{equation}\label{2020020001}
J(s)=e^{-\alpha-s\beta-s\sup\limits_M \Sc^-_{g(0)}}.
\end{equation}
The constants $\alpha$, $\beta$, $c_1$ and $c_2$ depend only on the dimension $n$, the coefficients in the Sobolev inequality for $g(0)$ and the infimum of Perelman's $\mathcal{F}$ entropy for $g(0)$.  Qi Zhang    ((1.2) in \cite{QSZ1}) also obtained an upper bound for   $G$ of the following form
\begin{equation}\label{2020020002}
\int_MG(z,l;x,t)dV_{g(t)}\leqslant1+C_0(1+t-l)^{\frac{n}{2}},
\end{equation}
where the constant $C_0$ depends only on a lower bound of $\Sc_{g(0)}$ and $n$. 
We take  $z=x_0$, $t=t_0$ and $l=t_0-r^2$ with $r\in(0,\sqrt{t_0})$ in $(\ref{202002001})$. For $x$ such that $d_{t_0}(x,x_0)\leqslant r$, we have
\begin{equation*}\label{202002008}
\begin{split}
G(x_0,t_0-r^2;x,t_0)&\geqslant\frac{c_1J(t_0)}{r^n}e^{-2c_2\frac{d^2_{t_0}(x,x_0)}{r^2}}e^{-\frac{1}{r}\int_{t_0-r^2}^{t_0}\sqrt{t_0-s}\Sc_{g(s)}ds}\\
&\geqslant \frac{c_1J(t_0)}{r^n}e^{-2c_2}e^{-\frac{1}{r}\int_{t_0-r^2}^{t_0}\sqrt{t_0-s}\Sc_{g(s)}ds}.
\end{split}
\end{equation*}
Integrating this  inequality over $M$, and  using $(\ref{2020020002})$, we get 
\begin{equation}\label{202002009}
\begin{split}
&\ \ \ 1+C_0(1+r^2)^{\frac{n}{2}}\geqslant\int_MG(x_0,t_0-r^2;x,t_0)dV_{g(t_0)}\\
&\geqslant\int_{B_{g(t_0)}(x_0,r)}G(x_0,t_0-r^2;x,t_0)dV_{g(t_0)}\\
&\geqslant \frac{c_1J(t_0)}{r^n}e^{-2c_2}\int_{B_{g(t_0)}(x_0,r)}e^{-\frac{1}{r}\int_{t_0-r^2}^{t_0}\sqrt{t_0-s}\Sc_{g(s)}ds}dV_{g(t_0)}\\
&\geqslant \frac{c_1J(t_0)}{r^n}e^{-2c_2}\Vol_{g(t_0)}(B_{g(t_0)}(x_0,r))e^{-\frac{1}{r\Vol_{g(t_0)}(B_{g(t_0)}(x_0,r))}\int_{B_{g(t_0)}(x_0,r)}\int_{t_0-r^2}^{t_0}\sqrt{t_0-s}\Sc_{g(s)}ds\ dV_{g(t_0)}},
\end{split}
\end{equation}
where we  used Jensen's inequality in the last step.
Note that $\Sc_{	g(t)} \geq -  B$ implies $\int_\Omega f(x)  dV_{g(t)}(x)\leq e^{B(t-s)}\int_\Omega f(x) dV_{g(s)}(x)$ for $0\leq s \leq t<T$ for  any open set $\Omega \subseteq M$ and any non-negative integrable (in space ) function $f,$ as one sees by differentiating  $\int_\Omega f(x)  dV_{g(t)}(x)$ and integrating with respect to time. 
 Using this fact and the   H\"older inequality, we deduce 
\begin{equation*}\label{2020020010}
\begin{split}
&\ \ -\frac{1}{r\Vol_{g(t_0)}(B_{g(t_0)}(x_0,r))}\int_{B_{g(t_0)}(x_0,r)}\int_{t_0-r^2}^{t_0}\sqrt{t_0-s}\Sc_{g(s)}ds\ dV_{g(t_0)}\\
&\geqslant-\frac{1}{r\Vol_{g(t_0)}(B_{g(t_0)}(x_0,r))}\int_{t_0-r^2}^{t_0}\sqrt{t_0-s}\Big(\int_{B_{g(t_0)}(x_0,r)}|\Sc_{g(s)}|^{\frac{n}{2}}dV_{g(t_0)}\Big)^{\frac{2}{n}}\Vol^{\frac{n-2}{n}}_{g(t_0)}(B_{g(t_0)}(x_0,r))ds\\
&\geqslant -\frac{1}{r\Vol^{\frac{2}{n}}_{g(t_0)}(B_{g(t_0)}(x_0,r))}\int_{t_0-r^2}^{t_0}\sqrt{t_0-s}\Big(\int_{Z}|\Sc_{g(s)}|^{\frac{n}{2}}dV_{g(t_0)}\Big)^{\frac{2}{n}}ds\\
&\geqslant -\frac{1}{r\Vol^{\frac{2}{n}}_{g(t_0)}(B_{g(t_0)}(x_0,r))}\int_{t_0-r^2}^{t_0}\sqrt{t_0-s}\Big(e^{B(t_0-s)}\int_{Z}|\Sc_{g(s)}|^{\frac{n}{2}}dV_{g(s)}\Big)^{\frac{2}{n}}ds\\
&\geqslant -\frac{1}{r\Vol^{\frac{2}{n}}_{g(t_0)}(B_{g(t_0)}(x_0,r))}\int_{t_0-r^2}^{t_0}\sqrt{t_0-s}\Big(e^{Br^2}\int_{Z}|\Sc_{g(s)}|^{\frac{n}{2}}dV_{g(s)}\Big)^{\frac{2}{n}}ds\\
&\geqslant-\frac{e^{\frac{2B}{n}r^2}E^{\frac{2}{n}}}{r\Vol^{\frac{2}{n}}_{g(t_0)}(B_{g(t_0)}(x_0,r))}\int_{t_0-r^2}^{t_0}\sqrt{t_0-s}\ ds\\
&=-\frac{2e^{\frac{2B}{n}r^2}E^{\frac{2}{n}}}{3\Vol^{\frac{2}{n}}_{g(t_0)}(B_{g(t_0)}(x_0,r))}r^2\geqslant-\frac{2e^{\frac{2B}{n}r^2}E^{\frac{2}{n}}}{3\kappa^{\frac{2}{n}} }.
\end{split} 
\end{equation*}
Inserting this inequality into \eqref{202002009}, we see that 
\begin{equation}\label{2020020011}
\Vol_{g(t_0)}(B_{g(t_0)}(x_0,r))\leqslant \Big(1+C_0(1+r^2)^{\frac{n}{2}}\Big)c_1^{-1}J^{-1}(t_0)e^{2c_2+\frac{2e^{\frac{2B}{n}r^2}E^{\frac{2}{n}}}{3\kappa^{\frac{2}{n}}}}r^n,
\end{equation}
which is the claimed statement. 
$\Box$ ({\it End of proof of the non-inflating estimate  }).

Combining  Theorem \ref{noncollapsethmcom}, with the non-inflating result of Bamler  and  Theorem \ref{LiuSim1}, we can now prove Theorem \ref{mainthmIntro}.\\
{\it Proof of Theorem \ref{mainthmIntro} and $\ti 1)$ of Remark \ref{noninflremark} } 

  For $Z:= N \cup \Omega$, where $\Omega$ is as in ({\bf B}), we have $\int_{Z}|\Sc_{g(t)}|^{\frac{n}{2}+\si}dV_{g(t)}\leqslant  L_0$  where  $L_0$
depends on $N,\Omega, g|_{\Omega}, g_0,L.$  
Due to the definition of $\Omega$ we have $ B_{g(t)}(x,r) \subseteq Z$ for all $ x \in N$ all $r \leq  10,$  for all $t\in [0, T).$   Hence      
the assumptions  of Theorem \ref{noncollapsethmcom} are satisfied. Hence $\vol_{g(t)}(B_{g(t)} (x,r)) \geq \si_0r^n$ for all $ x \in N$ for all $t \in [0, T)$  , for all $r\leq 1$ where $\si_0= \si_0(N,\Omega, g|_{\Omega}, g_0,L).$ 

We know  $\Sc_{g(t)} \geq -1$ for all $t\in [0,T)$ from the  assumption  in ({\bf B}) , and hence $\partt \vol_{g(t)}(M) = \partt (\int_M dV_{g(t))})  = -\int_M \Sc_{g(t)}dV_{g(t)} \leq  \vol_{g(t)} (M)$.
Hence $\vol_{g(t)}(M) \leq e^{ T}\vol_{g(0)}(M) = K_0,$ and consequently 
$\int_Z |\Sc_{g(t)}|^{\frac n 2} dV_{g(t)} \leq (\int_Z |\Sc_{g(t)}|^{\si +\frac n 2}dV_{g(t)})^{\frac{n}{n+2\si}} 
(\vol_{g(t)}(M) )^{\frac{2\si}{n+2\si}}=:\ti C.$
The non-inflating result in  $\ti 1)$ now  follows from Bamler (\cite{Bam} Theorem 8.1), or alternatively:   
using the fact that $ B_{g(t)}(x,r) \subseteq Z$ for all $ x \in N$ all $r \leq 1$  for all $t\in [0, T),$ we see the assumptions   \eqref{claim}  of the proof above are satisfied for  each $x\in N,$ $t=t_0 \in [0,T)$, which also 
leads to   $\ti 1)$. 
 The estimate 2) follows immediately from i) of Theorem \ref{LiuSim1}. 
The  non-inflating condition   implies the weaker condition   
  $ \vol(B_{g(t)}(x,Y\sqrt{V-t})) \leq \si_1 Y^n|V-t|^2 $ for all $t \in [S,V],$ $Y\sqrt{V-S}\leq 1$.
Hence,    
(iii)  from 
 Theorem \ref{LiuSim1}, combined with {(\bf B)} implies  3), the last claimed result. 
$\Box.$
\medskip

\section{Local estimates for heat flow coupled with Ricci flow in a c/t setting} \label{DGNM}

In this section, we give a local estimate for the non-negative solution of the linear heat equation associated with the Ricci flow by using iteration.

Let $(M,g(t))_{t\in[0,T)}$ with $T<\infty$ be a smooth solution to Ricci flow on an $n$-dimensional Riemannian manifold. Assume that $B_{g(t)}(x,r)\subset\subset M^n$ for all $0\leqslant t<T$. We further assume that there exists a constant $c_0$ such that
\begin{equation}\label{0427001}
|\Rm_{g(t)}|_{g(t)}\leqslant\frac{c_0}{t}\ \ \ \ on\ B_{g(t)}(x,r)\ \ \ for\ all\ t\in(0,T).
\end{equation}
Then for any $\vare\in(0,1)$ and $0<r_1<r_2$, by using the estimates and methods of Perelman \cite{GP}, one can construct cut-off function $\phi=\phi(\cdot,t)$ 
 (see Lemma $7.1$ in \cite{ST16},  also $(4.21)$ in \cite{MSIM2}), with the following properties.   There exists a positive constant $\widehat T$ such that on $M\times\left( [0,\widehat T)\cap [0,T) \right)$, there holds
\begin{eqnarray}\label{cut}
&& \frac{\partial\phi}{\partial t}\leqslant \Delta_{g(t)}\phi,\cr
&& |\nabla\phi|_{g(t)}\leqslant\frac{\alpha}{\vare(r_2-r_1)},\cr
&& \phi=e^{-\sigma t}\ \ \ \ \ on\ \ \ B_{g(t)}(x,r_1),\cr
&& \phi=0\ \ \ \ \ \ \ \ \ on\ \ \ B^c_{g(t)}(x,r_2),
\end{eqnarray}
where $\widehat T=\min\{r_1^2, \alpha c_1(r_2-r_1)^2\}$, $\sigma=\frac{\alpha}{\vare(r_2-r_1)^2}$, $c_1>0$ is a constant depending only on $c_0$ and $n$, and $\alpha>0$ is a universal constant.   

We study the heat equation
\begin{equation}\label{0322002}
\frac{\partial f(t)}{\partial t}\leqslant \Delta_{g(t)} f(t)+\curlyL(t)f(t)
\end{equation}
with $0\leqslant t\leqslant T$, where $f$ and $\curlyL$ are non-negative functions defined 
on  $ \cup_{t\in (0,T)} B_{g(t)}(x,r) \times \{t\}$ and
\begin{eqnarray}\label{0905001}
&& \curlyL(t)\leqslant \frac{\ell}{t}\ \ \ \text{on}\ \ \ B_{g(t)}(x,r)\  \text{for all }\ t \in (0,T) \cr
&&  \text{ for some positive constant}  \ \  \ell\geq 1 .
\end{eqnarray}
  We assume that the     there exist constants $A>0$ and $B\geq 1$ such that 
  for any $t\in[0,T],$  $B_{g(t)}(x,r) \subseteq M $,  and $h\in C^\infty_0(B_{g(t)}(x,r)),$  that there holds
\begin{equation}\label{0322003}
\Big(\int_{B_{g(t)}(x,r)} |h|^{\frac{2n}{n-2}} dV_{g(t)}\Big)^{\frac{n-2}{n}}\leqslant A\int_{B_{g(t)}(x,r)} \Big(4|\nabla h|^2_{g(t)}+\big(\Sc_{g(t)}+B\big)h^2\Big) dV_{g(t)}.
\end{equation}
%In general, this is a weaker Sobolev inequality than the standard one
%\begin{equation}\label{scaleinvariantsobolev}
%\Big(\int_{B_{g(t)}(x,r)} |h|^{\frac{2n}{n-2}} dV_{g(t)}\Big)^{\frac{n-2}{n}}\leqslant A\int_{B_{g(t)}(x,r)} |%\nabla h|^2_{g(t)} dV_{g(t)},
%\end{equation}
If   $M$ is  a closed manifold then this always is the case, due to Theorem \ref{SobolevZhangYe} of Ye and Zhang. 
\begin{thm}\label{MoserIteration}
Let $(M,g(t))_{t\in[0,T)}$ with $T<\infty$ be a smooth solution to Ricci flow on an $n$-dimensional Riemannian manifold and $f, \curlyL$ are   non-negative smooth functions defined on $ \cup_{t\in (0,T)} B_{g(t)}(x,r) \times \{t\} $ satisfying
  \eqref{0322002} and  \eqref{0905001}. 
Assume that
\eqref{0322003} holds,  that  $\Sc_{g(t)}  \geq -B$ for all $t \in [0,T),$ where $B\geq 1,$ that 
 $B_{g(t)}(x,r)  \subset \subset M^n$ for all $0\leqslant t<T,$ $ \cup_{t\in (0,T)} B_{g(t)}(x,r) \subset \subset M$  
    and that there 
       exists a constant $c_0$ such that
\begin{equation} 
|\Rm_{g(t)}|_{g(t)}\leqslant\frac{c_0}{t}\ \ \ \ \ on\ \ B_{g(t)}(x,r)\ \ \ for\ all\ t\in(0,T).
\end{equation}
Then for any $p\geqslant2$, there exist positive constants $K_1$ depending only on $n$, $p$, $A$ and $c_0$, and $\widehat T$ depending only on $r$, $c_0$ and $n$ such that for any $t\in(0,\widehat T)\cap(0,T)$, we have
\begin{equation}
\sup\limits_{B_{g(t)}(x,\frac{r}{2})}f(t)\leqslant K_1\Big(B+\frac{2n\ell p}{t}+\frac{32\alpha^2n^2}{r^2}\Big)^{\frac{n+2}{2p}}\Big(\int_{\frac{t}{2}}^{t}\int_{B_{g(s)}(x,r)}f^{p}dV_{g(s)}ds\Big)^{\frac{1}{p}}.
\end{equation}
For any $0<p<2$, there exists a positive constant $K_2$ depending only on $n$, $p$, $A$ and $c_0$ such that for any $t\in (0,\widehat T)\cap(0,T)$, we have
\begin{equation}
\sup\limits_{B_{g(t)}(x,\frac{r}{2})}f(t)\leqslant K_2\Big(B+\frac{4n \ell}{t}+\frac{32\alpha^2n^2}{r^2}\Big)^{\frac{n+2}{2p}}\Big(\int_{\frac{t}{2}}^{t}\int_{B_{g(s)}(x,r)}f^{p}dV_{g(s)}ds\Big)^{\frac{1}{p}}.
\end{equation}
More precisely, $K_1=(4A)^{\frac{n}{2p}}(1+\frac{2}{n})^{\frac{(n+2)^2}{2p}}e^{\frac{2c_1\alpha^2 (n+2)}{p}}$, $K_2=\frac{\gamma^{\frac{n+2}{p}}p(2-p)^{\frac{2-p}{p}}(4A)^{\frac{n}{2p}}(1+\frac{2}{n})^{\frac{(n+2)^2}{2p}}e^{\frac{2c_1\alpha^2(n+2)}{p}}}{(2\gamma^{\frac{n+2}{p}}-1)(1-\gamma)^{\frac{n+2}{p}}}$ with constant $\gamma\in(0,1)$ satisfying $2\gamma^{\frac{n+2}{p}}>1$, $\widehat T=\min\{\frac{r^2}{4}, \frac{\alpha c_1r^2}{4}\}$,  $A>0$ and  $B\geq 1$ come from \eqref{0322003} and $B$ satisfies $\Sc_{g(t)}  \geq -B$ for all $t \in [0,T),$ $\ell$ comes from \eqref{0905001}, $c_1$ is a positive constant depending only on $c_0$ and $n$, and $\alpha$ is a universal constant coming form \eqref{cut}.
\end{thm}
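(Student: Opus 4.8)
The plan is to run a Moser iteration scheme adapted to the Ricci flow setting, following the classical strategy but keeping careful track of the $\frac{c_0}{t}$ and $\frac{\ell}{t}$ dependencies. First I would derive the basic energy (Caccioppoli-type) inequality: multiply \eqref{0322002} by $\phi^2 f^{q-1}$ for the cut-off $\phi$ from \eqref{cut} and a power $q \geq 2$, integrate over $B_{g(t)}(x,r)$, and integrate by parts. The reaction term $\curlyL f$ contributes a term controlled by $\frac{\ell}{t}\int \phi^2 f^q$ via \eqref{0905001}, the evolving metric contributes a $-\Sc_{g(t)}$ term from $\partial_t dV_{g(t)}$, and the $\phi_t \leq \Delta\phi$ property of the cut-off is exactly what is needed so that the time-derivative of $\int \phi^2 f^q\, dV$ can be estimated without a bad sign. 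The outcome is a differential inequality of Grönwall type for $Y(t):=\int_{B_{g(t)}(x,r)} \phi^2 f^q\, dV_{g(t)}$ together with a space-time gradient term $\int\int \phi^2 |\nabla(f^{q/2})|^2$, with coefficients involving $B$, $\frac{\ell}{t}$, and $\frac{1}{r^2}$ (the last from $|\nabla\phi|^2$).

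Next I would feed the gradient term into the Sobolev inequality \eqref{0322003}. The $(\Sc_{g(t)}+B)h^2$ term on the right of \eqref{0322003} is where the lower bound $\Sc_{g(t)} \geq -B$ is used, and combining with the Caccioppoli estimate gives a reverse-Hölder / gain-of-integrability step: $\|\phi f^{q/2}\|_{L^{2n/(n-2)}}$ is bounded in terms of $\|f^{q/2}\|_{L^2}$ on the slightly larger region, with the multiplicative constant growing polynomially in $q$ and in $\big(B + \frac{\ell q}{t} + \frac{1}{r^2}\big)$. Iterating with $q = 2\cdot(\tfrac{n}{n-2})^k$ and a sequence of shrinking radii $r_k \downarrow r/2$ and shrinking time-origins, and controlling the product of the constants $\prod_k K^{(n/(n-2))^{-k}}$ (which converges since it is a geometric-type series in the exponents), yields the $L^\infty$-on-$L^p$ bound for $p = 2$. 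The precise exponent $\frac{n+2}{2p}$ comes from the standard parabolic scaling in the iteration (the $n+2$ being the parabolic dimension), and the explicit constants $K_1$, $K_2$ are just the bookkeeping of these geometric products, with the $e^{2c_1\alpha^2(n+2)/p}$ factor coming from the cut-off time-constant $c_1$ in $\widehat T$. The restriction to $t \in (0,\widehat T)$ with $\widehat T = \min\{r^2/4, \alpha c_1 r^2/4\}$ is precisely the interval on which the cut-off functions of \eqref{cut} exist with the stated properties.

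For general $p \geq 2$ one reruns the iteration starting from $q = p$ instead of $q=2$; the shift of the starting exponent changes the combinatorial constant to $K_1$ as written. For $0 < p < 2$ the usual trick applies: one cannot start the iteration below $L^2$, so instead one interpolates — bound $\sup f$ via the $L^2$-based estimate, then absorb $\|f\|_{L^2}$ against $(\sup f)^{1-p/2}\|f\|_{L^p}^{p/2}$ by Young's inequality with the free parameter $\gamma \in (0,1)$ (subject to $2\gamma^{(n+2)/p} > 1$ so the absorption closes), producing the constant $K_2$ with its $\gamma$-dependent denominator. I expect the main obstacle to be the first step, the energy inequality: getting the signs right for the coupled time-derivative term $\partial_t\big(\int \phi^2 f^q\, dV_{g(t)}\big)$ — one must simultaneously handle $\partial_t(f^q)$ using \eqref{0322002}, $\partial_t(\phi^2)$ using the supersolution property $\phi_t \leq \Delta\phi$, and $\partial_t(dV_{g(t)}) = -\Sc_{g(t)}\,dV_{g(t)}$, and then organize the leftover curvature and reaction terms so that everything is dominated by the Sobolev-admissible combination $4|\nabla h|^2 + (\Sc + B)h^2$ plus harmless $\frac{1}{t}$- and $\frac{1}{r^2}$-type terms; the $\frac{c_0}{t}$ curvature bound enters here only through the construction of $\phi$ and is otherwise not needed pointwise, which is the reason the final constants depend on $c_0$ only through $c_1$.
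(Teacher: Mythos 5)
Your proposal follows essentially the same route as the paper's proof: the Caccioppoli-type estimate built from the cut-off with $\frac{\partial \phi}{\partial t}\leq \Delta\phi$ and $\partial_t dV_{g(t)}=-\Sc_{g(t)}dV_{g(t)}$, the Sobolev inequality \eqref{0322003} with the $(\Sc_{g(t)}+B)$ term absorbing the lower scalar curvature bound, a parabolic Moser iteration with shrinking radii and time origins starting at exponent $p$ for $p\geq 2$, and for $0<p<2$ the interpolation/absorption argument with the parameter $\gamma\in(0,1)$ satisfying $2\gamma^{\frac{n+2}{p}}>1$. The only slip is cosmetic: the parabolic iteration gains the factor $1+\frac{2}{n}$ per step (from interpolating the sup-in-time $L^{q}$ bound against the spatial Sobolev norm, as in the paper's choice $p_k=p_0(1+\frac{2}{n})^k$), not $\frac{n}{n-2}$ as you wrote, and it is exactly this gain that produces the exponent $\frac{n+2}{2p}$ you correctly anticipate.
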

\begin{proof}
We first consider the case $p\geqslant2$. Let $\phi$ be a smooth function compactly supported in $B_{g(t)}(x,r)$ and satisfy heat inequality 
\begin{equation}
\frac{\partial\phi}{\partial t}\leqslant \Delta_{g(t)}\phi.
\end{equation}
For any $p\geqslant2$, direct computations show that
\begin{equation}\label{0322006}
\begin{split}
\frac{d}{dt}\int_M\phi^2f^pdV_{g(t)}&= \int_M2\phi\frac{\partial\phi}{\partial t}f^{p}dV_{g(t)}+p\int_M\phi^2f^{p-1}\frac{\partial f}{\partial t}dV_{g(t)}\\
&\ \ \ -\int_M\phi^2f^p\Sc_{g(t)}dV_{g(t)}\\
&\leqslant \int_M2\phi\Delta_{g(t)}\phi f^{p}dV_{g(t)}+p\int_M\phi^2f^{p-1}(\Delta_{g(t)}f+\curlyL f)dV_{g(t)}\\
&\ \ \ -\int_M\phi^2f^p\Sc_{g(t)}dV_{g(t)}.
\end{split}
\end{equation}
Applying integration by parts and Young's inequality, we get
\begin{equation*}\label{0322007}
\begin{split}
&\ \ \ \int_M2\phi\Delta_{g(t)}\phi f^{p}dV_{g(t)}+p\int_M\phi^2f^{p-1}\Delta_{g(t)}fdV_{g(t)}\\
&=-p(p-1)\int_M\phi^2f^{p-2}|\nabla f|_{g(t)}^2dV_{g(t)}-2\int_Mf^{p}|\nabla\phi|^2_{g(t)}dV_{g(t)}-4p\int_M\phi f^{p-1}(\nabla \phi,\nabla f)_{g(t)}dV_{g(t)}\\
&=-\frac{4(p-1)}{p}\int_M\Big(|\nabla(\phi f^{\frac{p}{2}})|^2_{g(t)}-f^p|\nabla\phi|^2_{g(t)}-p\phi f^{p-1}(\nabla \phi,\nabla f)_{g(t)}\Big)dV_{g(t)}\\
&\ \ \ \ -2\int_Mf^{p}|\nabla\phi|^2_{g(t)}dV_{g(t)}-4p\int_M\phi f^{p-1}(\nabla \phi,\nabla f)_{g(t)}dV_{g(t)}\\
&=-\frac{4(p-1)}{p}\int_M|\nabla(\phi f^{\frac{p}{2}})|^2_{g(t)}dV_{g(t)}+\frac{2(p-2)}{p}\int_Mf^p|\nabla\phi|^2_{g(t)}dV_{g(t)}-4\int_M\phi f^{p-1}(\nabla \phi,\nabla f)_{g(t)}dV_{g(t)}\\
&=-\frac{4(p-1)}{p}\int_M|\nabla(\phi f^{\frac{p}{2}})|^2_{g(t)}dV_{g(t)}+\frac{2(p+2)}{p}\int_Mf^p|\nabla\phi|^2_{g(t)}dV_{g(t)}-\frac{8}{p}\int_Mf^{\frac{p}{2}}\big(\nabla \phi,\nabla (\phi f^{\frac{p}{2}})\big)_{g(t)}dV_{g(t)}\\
&\leqslant-\frac{4(p-1)}{p}\int_M|\nabla(\phi f^{\frac{p}{2}})|^2_{g(t)}dV_{g(t)}+\frac{2(p+2)}{p}\int_Mf^p|\nabla\phi|^2_{g(t)}dV_{g(t)}\\
&\ \ \ \ +\frac{2}{p}\int_M|\nabla(\phi f^{\frac{p}{2}})|^2_{g(t)}dV_{g(t)}+\frac{8}{p}\int_Mf^p|\nabla\phi|^2_{g(t)}dV_{g(t)}\\
&=\frac{-p-3(p-2)}{p}\int_M|\nabla(\phi f^{\frac{p}{2}})|^2_{g(t)}dV_{g(t)}+(2+\frac{12}{p})\int_Mf^p|\nabla\phi|^2_{g(t)}dV_{g(t)}\\
&\leqslant-\int_M|\nabla(\phi f^{\frac{p}{2}})|^2_{g(t)}dV_{g(t)}+8\int_Mf^p|\nabla\phi|^2_{g(t)}dV_{g(t)},
\end{split}
\end{equation*}
in view of the fact that we are assuming $p\geq 2$.
Inserting the inequality $(\ref{0322006})$ into the one we just obtained, we see 
\begin{equation}\label{0427003}
\begin{split}
&\ \ \ \ \frac{d}{dt}\int_M\phi^2f^pdV_{g(t)}+\int_M\phi^2f^p\Sc_{g(t)}dV_{g(t)}+\int_M|\nabla(\phi f^{\frac{p}{2}})|^2_{g(t)}dV_{g(t)}\\
&\leqslant p\int_M\curlyL\phi^2f^{p}dV_{g(t)}+8\int_Mf^p|\nabla\phi|^2_{g(t)}dV_{g(t)}.
\end{split}
\end{equation}
and then
\begin{equation}\label{0322008}
\begin{split}
&\ \ \ \ \frac{d}{dt}\int_M\phi^2f^pdV_{g(t)}+\frac{1}{4}\int_M\phi^2f^p(\Sc_{g(t)}+B)dV_{g(t)}+\int_M|\nabla(\phi f^{\frac{p}{2}})|^2_{g(t)}dV_{g(t)}\\
&\leqslant\frac{d}{dt}\int_M\phi^2f^pdV_{g(t)}+\int_M\phi^2f^p(\Sc_{g(t)}+B)dV_{g(t)}+\int_M|\nabla(\phi f^{\frac{p}{2}})|^2_{g(t)}dV_{g(t)}\\
&\leqslant p\int_M\curlyL\phi^2f^{p}dV_{g(t)}+B\int_M\phi^2f^{p}dV_{g(t)}+8\int_M|\nabla\phi|^2_{g(t)}f^pdV_{g(t)}.
\end{split}
\end{equation}
Now for  any $0<\tau<\tau'\leqslant T'<\widetilde T\leqslant T$, we let
$$\psi=\left\{
\begin{aligned}
 &0,\ \ \ \ \ \ \ \ \ \ \ 0\leqslant t\leqslant\tau,\\
 &\frac{t-\tau}{\tau'-\tau}\ \ \ \ \ \tau\leqslant t\leqslant\tau',\\
 &1,\ \ \ \ \ \ \ \ \ \ \ \tau'\leqslant t< \widetilde T.
\end{aligned}
\right.$$
Multiplying $(\ref{0322008})$ by $\psi$, we have
\begin{equation}\label{0322009}
\begin{split}
&\ \ \ \ \frac{d}{dt}\Big(\psi\int_M\phi^2f^pdV_{g(t)}\Big)+\frac{1}{4}\psi\int_M\phi^2f^p(\Sc_{g(t)}+B)dV_{g(t)}+\psi\int_M|\nabla(\phi f^{\frac{p}{2}})|^2_{g(t)}dV_{g(t)}\\
&\leqslant p\psi\int_M\curlyL\phi^2f^{p}dV_{g(t)}+(B\psi+\psi')\int_M\phi^2f^{p}dV_{g(t)}+8\psi\int_Mf^p|\nabla\phi|^2_{g(t)}dV_{g(t)}\\
&\leqslant p\int_M\curlyL\phi^2f^{p}dV_{g(t)}+(B+\frac{1}{\tau'-\tau})\int_M\phi^2f^{p}dV_{g(t)}+8\int_Mf^p|\nabla\phi|^2_{g(t)}dV_{g(t)}.
\end{split}
\end{equation}
Integrating $(\ref{0322009})$ from $\tau$ to $t$ gives
\begin{equation}\label{03220010}
\begin{split}
&\ \ \ \ \psi\int_M\phi^2f^pdV_{g(t)}+\frac{1}{4}\int_\tau^t\int_M\psi\Big(\phi^2f^p(\Sc_{g(s)}+B)+4|\nabla(\phi f^{\frac{p}{2}})|^2_{g(s)}\Big)dV_{g(s)}ds\\
&\leqslant p\int_\tau^t\int_M\curlyL\phi^2f^{p}dV_{g(s)}ds+(B+\frac{1}{\tau'-\tau})\int_\tau^t\int_M\phi^2f^{p}dV_{g(s)}ds\\
&\ \ \ +8\int_\tau^t\int_Mf^p|\nabla\phi|^2_{g(s)}dV_{g(s)}ds.
\end{split}
\end{equation}
Therefore, for any $\tau<\tau'\leqslant t\leqslant T'< \widetilde T\leqslant T$, we have
\begin{equation}\label{03220011}
\begin{split}
&\ \ \ \ \int_M\phi^2f^pdV_{g(t)}+\frac{1}{4}\int_{\tau'}^t\int_M\Big(\phi^2f^p(\Sc_{g(s)}+B)+4|\nabla(\phi f^{\frac{p}{2}})|^2_{g(s)}\Big)dV_{g(s)}ds\\
&\leqslant p\int_\tau^{T'}\int_M\curlyL\phi^2f^{p}dV_{g(t)}dt+(B+\frac{1}{\tau'-\tau})\int_\tau^{T'}\int_M\phi^2f^{p}dV_{g(t)}dt\\
&\ \ \ +8\int_\tau^{T'}\int_Mf^p|\nabla\phi|^2_{g(t)}dV_{g(t)}dt.
\end{split}
\end{equation}
Therefore, we have
\begin{equation}\label{03220012}
\begin{split}
&\ \ \ \ \sup\limits_{t\in[\tau',T']}\int_M\phi^2f^pdV_{g(t)}+\frac{1}{4}\int_{\tau'}^{T'}\int_M\Big(\phi^2f^p(\Sc_{g(t)}+B)+4|\nabla(\phi f^{\frac{p}{2}})|^2_{g(t)}\Big)dV_{g(t)}dt\\
&\leqslant p\int_\tau^{T'}\int_M\curlyL\phi^2f^{p}dV_{g(t)}dt+(B+\frac{1}{\tau'-\tau})\int_\tau^{T'}\int_M\phi^2f^{p}dV_{g(t)}dt\\
&\ \ \ +8\int_\tau^{T'}\int_Mf^p|\nabla\phi|^2_{g(t)}dV_{g(t)}dt.
\end{split}
\end{equation}

For any $0<r'<r''\leqslant r$, we choose a cut-off function $\phi$ satisfying $(\ref{cut})$ with $r_1=r'$, $r_2=r''$ and $\vare=\frac{1}{2}$. Then for $0<\tau<\tau'\leqslant T'<\widetilde T$ with $\widetilde T=\min\{\widehat T, T\}=\min\{{r'}^{2}, c_1\alpha(r''-r')^2, T\}$, where positive constants $c_1$ and $\alpha>0$ come from \eqref{cut}, by using \eqref{03220012}, H\"older inequality and Sobolev inequality $(\ref{0322003})$, we have
\begin{equation}\label{03220015}
\begin{split}
&\ \ \ \ \int_{\tau'}^{T'}\int_{B_{g(t)}(x,r')}e^{-2\sigma t(1+\frac{2}{n})}f^{p(1+\frac{2}{n})}dV_{g(t)}dt\\
&\leqslant\int_{\tau'}^{T'}\Big(\int_{B_{g(t)}(x,r')}e^{-2\sigma t}f^{p}dV_{g(t)}\Big)^{\frac{2}{n}}\Big(\int_{B_{g(t)}(x,r')}e^{-2\sigma t\frac{n}{n-2}}f^{p\frac{n}{n-2}}dV_{g(t)}\Big)^{\frac{n-2}{n}}dt\\
&\leqslant\int_{\tau'}^{T'}\Big(\int_{M}\phi^2f^{p}dV_{g(t)}\Big)^{\frac{2}{n}}\Big(\int_{M}(\phi f^{\frac{p}{2}})^{\frac{2n}{n-2}}dV_{g(t)}\Big)^{\frac{n-2}{n}}dt\\
&\leqslant\sup\limits_{t\in[\tau',T']}\Big(\int_{M}\phi^2f^{p}dV_{g(t)}\Big)^{\frac{2}{n}}\int_{\tau'}^{T'}\Big(\int_{M}(\phi f^{\frac{p}{2}})^{\frac{2n}{n-2}}dV_{g(t)}\Big)^{\frac{n-2}{n}}dt\\
&\leqslant\sup\limits_{t\in[\tau',T']}\Big(\int_{M}\phi^2f^{p}dV_{g(t)}\Big)^{\frac{2}{n}}A\int_{\tau'}^{T'}\int_{M} \Big(4|\nabla (\phi f^{\frac{p}{2}})|^2_{g(t)}+\big(\Sc_{g(t)}+B\big)\phi^2 f^{p}\Big) dV_{g(t)}dt\\
&\leqslant 4A\Big[(B+\frac{1}{\tau'-\tau})\int_\tau^{T'}\int_M\phi^2f^{p}dV_{g(t)}dt+p\int_\tau^{T'}\int_M\curlyL\phi^2f^{p}dV_{g(t)}dt\\
& \ \ \ \ \ \ \ \ \ +8\int_\tau^{T'}\int_Mf^p|\nabla\phi|^2_{g(t)}dV_{g(t)}dt\Big]^{1+\frac{2}{n}}\\
&\leqslant 4A\Big(\frac{\ell p}{\tau}+B+\frac{32\alpha^2}{(r''-r')^2}+\frac{1}{\tau'-\tau}\Big)^{1+\frac{2}{n}}\Big(\int_\tau^{T'}\int_{B_{g(t)}(x,r'')}f^{p}dV_{g(t)}dt\Big)^{1+\frac{2}{n}}.
\end{split}
\end{equation}

For all $p\geqslant2$ and $0<s<T'$, we define
\begin{equation}\label{03220016}
H(p,s,\rho)=\Big(\int_s^{T'}\int_{B_{g(t)}(x,\rho)}f^{p}dV_{g(t)}dt\Big)^{\frac{1}{p}}.
\end{equation}
For any $0<\tau<\tau'\leqslant T'<\widetilde T$ and $0<r'<r''\leqslant r$, by $(\ref{03220015})$, we have
\begin{equation}\label{03220017}
H\Big(p(1+\frac{2}{n}),\tau',r'\Big)\leqslant \big(4Ae^{2\sigma \widetilde T(1+\frac{2}{n})}\big)^{\frac{1}{p(1+\frac{2}{n})}}\Big(\frac{\ell p}{\tau}+B+\frac{32\alpha^2}{(r''-r')^2}+\frac{1}{\tau'-\tau}\Big)^{\frac{1}{p}}H(p,\tau,r'').
\end{equation}
For any $0<t'<t''< T'<\widetilde T$, we let $\nu=1+\frac{2}{n}$, $p_k=p_0\nu^k$, $\tau_k=t'+(1-\nu^{-k})(t''-t')$ and $r_k=r''-(1-\nu^{-k})(r''-r')$. Then we have
\begin{equation}\label{03220018}
\frac{p_k}{\tau_k}\leqslant\frac{np_0\nu^{2k}}{2(t''-t')},\ \ \ \frac{1}{\tau_{k+1}-\tau_k}=\frac{n\nu^{k+1}}{2(t''-t')}\ \ \ and\ \ \ \frac{1}{(r_k-r_{k+1})^2}=\frac{n^2\nu^{2(k+1)}}{4(r''-r')^2}.
\end{equation}
Repeating $(\ref{03220017})$ for $p_k$, $\tau_k$ and $r_k$ with $k\in\mathbb{N}$, we have
\begin{equation*}\label{03220019}
\begin{split}
&\ \ \ \ H(p_{k+1},\tau_{k+1},r_{k+1})\\
&\leqslant\big(4Ae^{2\sigma \widetilde T(1+\frac{2}{n})}\big)^{\frac{1}{p_{k+1}}}\Big(\frac{\ell p_k}{\tau_k}+\frac{B}{p_0}p_k+\frac{8\alpha^2n^2\nu^{2(k+1)}}{(r''-r')^2}+\frac{n\nu^{k+1}}{2(t''-t')}\Big)^{\frac{1}{p_k}}H(p_{k},\tau_{k},r_{k})\\
&\leqslant \big(4Ae^{2\sigma \widetilde T(1+\frac{2}{n})}\big)^{\sum\limits_{j=0}^k\frac{1}{p_{j+1}}}\Big(\frac{n\ell p_0}{2(t''-t')}+B+\frac{8\alpha^2n^2}{(r''-r')^2}+\frac{n}{2(t''-t')}\Big)^{\sum\limits_{j=0}^k\frac{1}{p_j}}\nu^{\sum\limits_{j=0}^k\frac{2(j+1)}{p_j}}H(p_0,\tau_0,r_0)\\
&\leqslant  \big(4Ae^{2\sigma \widetilde T(1+\frac{2}{n})}\big)^{\sum\limits_{j=0}^k\frac{1}{p_{j+1}}}\Big(B+\frac{n \ell p_0}{(t''-t')}+\frac{8\alpha^2n^2}{(r''-r')^2}\Big)^{\sum\limits_{j=0}^k\frac{1}{p_j}}\nu^{\sum\limits_{j=0}^k\frac{2(j+1)}{p_j}}H(p_0,\tau_0,r_0).
\end{split}
\end{equation*}
On the other hand, since $\tau_k\nearrow t''$ and $r_k\searrow r'$ as $k\nearrow +\infty$, we have
\begin{equation*}\label{0513001}
\begin{split}
&\ \ \ \ H(p_{k+1},\tau_{k+1},r_{k+1})\\
&=\Big(\int_{\tau_{k+1}}^{T'}\int_{B_{g(t)}(x,r_{k+1})}f^{p_{k+1}}dV_{g(t)}dt\Big)^{\frac{1}{p_{k+1}}}\\
&\geqslant \Big(\int_{t''}^{T'}\int_{B_{g(t)}(x,r')}f^{p_{k+1}}dV_{g(t)}dt\Big)^{\frac{1}{p_{k+1}}}\\
&=\Big(\int_{t''}^{T'}\int_{M}\Big(\chi  f\Big)^{p_{k+1}}dV_{g(t)}dt\Big)^{\frac{1}{p_{k+1}}}\\
&\geqslant \Big(e^{-B(T'-t'')}\int_{t''}^{T'}\int_{M}\Big(\chi  f\Big)^{p_{k+1}}dV_{g(T')}dt\Big)^{\frac{1}{p_{k+1}}}\\
&= \Big(e^{-B(T'-t'')}(T'-t'')\Vol_{g(T')}(Z)\Big)^{\frac{1}{p_{k+1}}}\Big(\frac{1}{(T'-t'')\Vol_{g(T')}(Z)}\int_{t''}^{T'}\int_{M}\Big(\chi  f\Big)^{p_{k+1}}dV_{g(T')}dt\Big)^{\frac{1}{p_{k+1}}},
\end{split}
\end{equation*}
where $Z=   \cup_{t\in (0,T)} B_{g(t)}(x,r) $ and we use Lemma \ref{202002003} in the last inequality,  and  the fact that $Z$ is compactly contained in $M$ and hence has bounded positive volume for all $t\in [0,T)$, and  the function $\chi:M \times (0,T) \to \{0,1\} \subseteq \R $ is defined by
$$\chi(y,t)=\left\{
\begin{aligned}
 &1,\ \ \ \ \ \ \ \ \ \ \  \text{ for } \ \ y \in B_{g(t)}(x,r'),\\
 &0,\ \ \ \ \ \ \ \ \ \ \ \text{ for } \ \ y \in   M\setminus B_{g(t)}(x,r').
\end{aligned}
\right.$$
Therefore, there holds
\begin{equation*}\label{0513003}
\begin{split}
&\ \ \Big(e^{-B(T'-t'')}(T'-t'')\Vol_{g(T')}(Z)\Big)^{\frac{1}{p_{k+1}}}\Big(\frac{1}{(T'-t'')\Vol_{g(T')}(Z)}\int_{t''}^{T'}\int_{M}\Big(\chi f\Big)^{p_{k+1}}dV_{g(T')}dt\Big)^{\frac{1}{p_{k+1}}}\\
&\leqslant \big(4Ae^{2\sigma \widetilde T(1+\frac{2}{n})}\big)^{\sum\limits_{j=0}^k\frac{1}{p_{j+1}}}\Big(B+\frac{n\ell p_0}{(t''-t')}+\frac{8\alpha^2n^2}{(r''-r')^2}\Big)^{\sum\limits_{j=0}^k\frac{1}{p_j}}\nu^{\sum\limits_{j=0}^k\frac{2(j+1)}{p_j}}H(p_0,\tau_0,r_0).
\end{split}
\end{equation*}
After letting $k$ go to $+\infty$, since $\sigma\widetilde T\leqslant2c_1\alpha^2$, we have
\begin{equation*}
\sup\limits_{(y,t) \in M\times[t'',T']}\Big(\chi(y,t)f(y,t)\Big)\leqslant C'_1\Big(B+\frac{n\ell p_0}{(t''-t')}+\frac{8\alpha^2n^2}{(r''-r')^2}\Big)^{\frac{n+2}{2p_0}}\Big(\int_{t'}^{T'}\int_{B_{g(t)}(x,r'')}f^{p_0}dV_{g(t)}dt\Big)^{\frac{1}{p_0}},
\end{equation*}
which is exactly
\begin{equation}\label{03220020}
\sup\limits_{\cup_{  t \in [t'',T']}     B_{g(t)}(x,r') \times \{t\}    }f  \leqslant C'_1\Big(B+\frac{n\ell p_0}{(t''-t')}+\frac{8\alpha^2n^2}{(r''-r')^2}\Big)^{\frac{n+2}{2p_0}}\Big(\int_{t'}^{T'}\int_{B_{g(t)}(x,r'')}f^{p_0}dV_{g(t)}dt\Big)^{\frac{1}{p_0}},
\end{equation}
where constant $C'_1=(4A)^{\frac{n}{2p_0}}(1+\frac{2}{n})^{\frac{(n+2)^2}{2p_0}}e^{\frac{2c_1\alpha^2 (n+2)}{p_0}}$.

Taking $r'=\frac{r}{2}$ and $r''=r$, we have
\begin{equation}\label{0430009}
\sup\limits_{\cup_{t\in[t'',T']} B_{g(t)}(x,\frac r 2) \times \{t\}}f\leqslant C'_1\Big(B+\frac{n\ell p_0}{(t''-t')}+\frac{32\alpha^2n^2}{r^2}\Big)^{\frac{n+2}{2p_0}}\Big(\int_{t'}^{T'}\int_{B_{g(t)}(x,r)}f^{p_0}dV_{g(t)}dt\Big)^{\frac{1}{p_0}}.
\end{equation}
Therefore, for any non-negative solution $f(t)$ of equation $(\ref{0322002})$, $p\geqslant2$ and $0<t'<t''<t<\widetilde T$,
\begin{equation}\label{03220022}
\sup\limits_{\cup_{s\in[t'',T']} B_{g(s)}(x,\frac r 2) \times \{s\}}f\leqslant C'_1\Big(B+\frac{n\ell p_0}{(t''-t')}+\frac{32\alpha^2n^2}{r^2}\Big)^{\frac{n+2}{2p_0}}\Big(\int_{t'}^{t}\int_{B_{g(s)}(x,r)}f^{p_0}dV_{g(s)}ds\Big)^{\frac{1}{p_0}}.
\end{equation}
Letting $t'=\frac{t}{2}$ and $t''\to t$, we have
\begin{equation}\label{03220023}
\sup\limits_{B_{g(t)}(x,\frac{r}{2})}f(t)\leqslant C'_1\Big(B+\frac{2n\ell p_0}{t}+\frac{32\alpha^2n^2}{r^2}\Big)^{\frac{n+2}{2p_0}}\Big(\int_{\frac{t}{2}}^{t}\int_{B_{g(s)}(x,r)}f^{p_0}dV_{g(s)}ds\Big)^{\frac{1}{p_0}}.
\end{equation}

Now we consider the case for $0<p<2$. For any $0<t'\leqslant s\leqslant T'<\widetilde T$, we define
\begin{equation}\label{0430001}
G(\rho,s)=\sup\limits_{\cup_{t\in [s,T']}  B_{g(t)}(x,\rho) \times \{t\} }f.
\end{equation}
We choose $p_0=2$ in $(\ref{03220020})$. Then for any $t'\leqslant s<t''< T'$ and $0<r'<\tilde r\leqslant r''\leqslant r$, there holds
\begin{equation*}
\begin{split}
G(r',t'')&\leqslant C'_1\Big(B+\frac{2n\ell}{(t''-s)}+\frac{8\alpha^2n^2}{(\tilde r-r')^2}\Big)^{\frac{n+2}{4}}\Big(\int_{s}^{T'}\int_{B_{g(t)}(x,\tilde r)}f^{2}dV_{g(t)}dt\Big)^{\frac{1}{2}}\\
&\leqslant C'_1\Big(B+\frac{2n\ell}{(t''-s)}+\frac{8\alpha^2n^2}{(\tilde r-r')^2}\Big)^{\frac{n+2}{4}}\Big(\big(\sup\limits_{\cup_{t\in [s,T']} B_{g(t)}(x,\tilde r) \times \{t\}}f\big)^{2-p}\int_{s}^{T'}\int_{B_{g(t)}(x,\tilde r)}f^{p}dV_{g(t)}dt\Big)^{\frac{1}{2}}\\
&=C'_1\Big(B+\frac{2n\ell}{(t''-s)}+\frac{8\alpha^2n^2}{(\tilde r-r')^2}\Big)^{\frac{n+2}{4}}G(\tilde r,s)^{\frac{2-p}{2}}\Big(\int_{s}^{T'}\int_{B_{g(t)}(x,\tilde r)}f^{p}dV_{g(t)}dt\Big)^{\frac{1}{2}}\\
&\leqslant\frac{2-p}{2}\Big(C_3'G(\tilde r,s)^{\frac{2-p}{2}}\Big)^{\frac{2}{2-p}}+\frac{p(C^{'-1}_3C'_1)^{\frac{2}{p}}}{2}\Big(B+\frac{2n\ell}{(t''-s)}+\frac{8\alpha^2n^2}{(\tilde r-r')^2}\Big)^{\frac{n+2}{2p}}\Big(\int_{s}^{T'}\int_{B_{g(t)}(x,\tilde r)}f^{p}dV_{g(t)}dt\Big)^{\frac{1}{p}}\\
&\leqslant\frac{2-p}{2}{C'_3}^{\frac{2}{2-p}}G(\tilde r,s)+\frac{p({C'_3}^{-1}C'_1)^{\frac{2}{p}}}{2}\Big(B+\frac{2n\ell}{(t''-s)}+\frac{8\alpha^2n^2}{(\tilde r-r')^2}\Big)^{\frac{n+2}{2p}}\Big(\int_{t'}^{T'}\int_{B_{g(t)}(x,r'')}f^{p}dV_{g(t)}dt\Big)^{\frac{1}{p}}.
\end{split}
\end{equation*}
We choose $C'_3$ depending only on $p$ such that $(2-p){C'_3}^{\frac{2}{2-p}}\leqslant1$. Denote $\beta=\frac{n+2}{2p}$ and 
\begin{equation}\label{}
Q=\frac{p(C^{'-1}_3C'_1)^{\frac{2}{p}}}{2}\Big(\int_{t'}^{T'}\int_{B_{g(t)}(x,r'')}f^{p}dV_{g(t)}dt\Big)^{\frac{1}{p}},
\end{equation}
then we have
\begin{equation}\label{0430002}
G(r',t'')\leqslant\frac{1}{2}G(\tilde r,s)+Q\Big(B+\frac{2n\ell}{(t''-s)}+\frac{8\alpha^2n^2}{(\tilde r-r')^2}\Big)^{\beta}.
\end{equation}
We choose $0<\gamma<1$ depending on $n$ and $p$ such that $2\gamma^{2\beta}>1$, and we let
\begin{equation}\label{0430003}
\begin{split}
&\ t_0=t'',\ \ \ \ t_{i+1}=t_i-(1-\gamma)\gamma^{i}(t''-s),\ \ \ \ \ \ then\ t_i\searrow s\ as\ i\to+\infty,\\
&\ r_0=r',\ \ \ r_{i+1}=r_i+(1-\gamma)\gamma^{i}(\tilde r-r'),\ \ \ then\ r_i\nearrow \tilde r\ as\ i\to+\infty.
\end{split}
\end{equation}
Then for all $i\in\mathbb{N}$,
\begin{equation}\label{0430004}
\frac{1}{t_i-t_{i+1}}=\frac{\gamma^{-i}}{(1-\gamma)(t''-s)}\ \ \ and\ \ \ \frac{1}{r_{i+1}-r_i}=\frac{\gamma^{-i}}{(1-\gamma)(\tilde r-r')}.
\end{equation}
By iteration, since $0<\gamma<1$, we have
\begin{equation}\label{0430005}
\begin{split}
&\ \ \ G(r',t'')=G(r_0,t_0)\\
&\leqslant\frac{1}{2}G(r_1,t_1)+Q\Big(B+\frac{2n\ell}{(t_0-t_1)}+\frac{8\alpha^2n^2}{(r_1-r_0)^2}\Big)^{\beta}\\
&\leqslant\frac{1}{2^2}G(r_2,t_2)+\frac{Q}{2}\Big(B+\frac{2n\ell}{(t_1-t_2)}+\frac{8\alpha^2n^2}{(r_2-r_1)^2}\Big)^{\beta}+Q\Big(B+\frac{2n\ell }{(t_0-t_1)}+\frac{8\alpha^2n^2}{(r_1-r_0)^2}\Big)^{\beta}\\
&=\frac{1}{2^2}G(r_2,t_2)+\frac{Q}{2}\Big(B+\frac{2n\ell \gamma^{-1}}{(1-\gamma)(t''-s)}+\frac{8\alpha^2n^2\gamma^{-2}}{(1-\gamma)^2(\tilde r-r')^2}\Big)^{\beta}\\
&\ \ \ +Q\Big(B+\frac{2n\ell}{(1-\gamma)(t''-s)}+\frac{8\alpha^2n^2}{(1-\gamma)^2(\tilde r-r')^2}\Big)^{\beta}\\
&\leqslant\frac{1}{2^{k+1}}G(r_{k+1},t_{k+1})+Q\sum\limits_{i=0}^k\Big(B\gamma^{2i}+\frac{2n\ell\gamma^i}{(1-\gamma)(t''-s)}+\frac{8\alpha^2n^2}{(1-\gamma)^2(\tilde r-r')^2}\Big)^{\beta} \Big(\frac{1}{2\gamma^{2\beta}}\Big)^i\\
&\leqslant\frac{1}{2^{k+1}}G(r_{k+1},t_{k+1})+Q\Big(B+\frac{2n\ell}{(1-\gamma)(t''-s)}+\frac{8\alpha^2n^2}{(1-\gamma)^2(\tilde r-r')^2}\Big)^{\beta}\sum\limits_{i=0}^k \Big(\frac{1}{2\gamma^{2\beta}}\Big)^i
\end{split}
\end{equation}
Since $G(r_k,t_k)\to G(\tilde r,s)$ as $k\to+\infty$, after letting $k\to+\infty$, we have
\begin{equation}\label{0430006}
G(r',t'')\leqslant \frac{2\gamma^{2\beta}Q}{2\gamma^{2\beta}-1}\Big(B+\frac{2n\ell}{(1-\gamma)(t''-s)}+\frac{8\alpha^2n^2}{(1-\gamma)^2(\tilde r-r')^2}\Big)^{\beta}.
\end{equation}
Taking $s=t'$ and $\tilde r=r''$, we have
\begin{equation}\label{0430007}
\begin{split}
& \sup\limits_{\cup_{t \in [t'',T']} B_{g(t)}(x,r')\times \{t\} }f\\
&\leqslant \frac{\gamma^{2\beta}p(C^{'-1}_3)^{\frac{2}{p}}(4A)^{\frac{n}{2p}}(1+\frac{2}{n})^{\frac{(n+2)^2}{2p}}e^{\frac{2c_1\alpha^2(n+2)}{p}}}{2\gamma^{2\beta}-1}\times\\
&\ \ \ \Big(B+\frac{2n\ell}{(1-\gamma)(t''-t')}+\frac{8\alpha^2n^2}{(1-\gamma)^2(r''-r')^2}\Big)^{\beta}\Big(\int_{t'}^{T'}\int_{B_{g(t)}(x,r'')}f^{p}dV_{g(t)}dt\Big)^{\frac{1}{p}}
\end{split}
\end{equation}
Therefore, for any $0<p<2$, there exist positive constants $C'_2$ and $C'_4$ depending only on $p$, $A$, $c_0$ and $n$ such that
\begin{equation}\label{0430008}
\begin{split}
& \sup\limits_{\cup_{ t\in [t'',T']} B_{g(t)}(x,r') \times \{t\}}f \\
& \leqslant C'_2\Big(B+\frac{2n\ell}{(1-\gamma)(t''-t')}+\frac{8\alpha^2n^2}{(1-\gamma)^2(r''-r')^2}\Big)^{\frac{n+2}{2p}}\Big(\int_{t'}^{T'}\int_{B_{g(t)}(x,r'')}f^{p}dV_{g(t)}dt\Big)^{\frac{1}{p}}\\
&\leqslant C'_2\frac{1}{(1-\gamma)^{\frac{n+2}{p}}}\Big(B(1-\gamma)^2+\frac{2n\ell(1-\gamma)}{(t''-t')}+\frac{8\alpha^2n^2}{(r''-r')^2}\Big)^{\frac{n+2}{2p}}\Big(\int_{t'}^{T'}\int_{B_{g(t)}(x,r'')}f^{p}dV_{g(t)}dt\Big)^{\frac{1}{p}}\\
&\leqslant C'_4\Big(B+\frac{2n\ell}{(t''-t')}+\frac{8\alpha^2n^2}{(r''-r')^2}\Big)^{\frac{n+2}{2p}}\Big(\int_{t'}^{T'}\int_{B_{g(t)}(x,r'')}f^{p}dV_{g(t)}dt\Big)^{\frac{1}{p}}.
\end{split}
\end{equation}
Taking $r'=\frac{r}{2}$ and $r''=r$, we have
\begin{equation}\label{0430009new}
\sup\limits_{\cup_{t \in [t'',T']} B_{g(t)}(x,\frac{r}{2}) \times \{t\}}f\leqslant C'_4\Big(B+\frac{2n\ell}{(t''-t')}+\frac{32\alpha^2n^2}{r^2}\Big)^{\frac{n+2}{2p}}\Big(\int_{t'}^{T'}\int_{B_{g(t)}(x,r)}f^{p}dV_{g(t)}dt\Big)^{\frac{1}{p}}.
\end{equation}
Therefore, for any non-negative solution $f$ of equation $(\ref{0322002})$, $0<p<2$ and $0<t'<t''<t< \widetilde T\leqslant T$,
\begin{equation}\label{03220022new}
\sup\limits_{\cup_{s \in [t'',t]} B_{g(s)}(x,\frac{r}{2}) \times \{s\}}f\leqslant C'_4\Big(B+\frac{2n\ell}{(t''-t')}+\frac{32\alpha^2n^2}{r^2}\Big)^{\frac{n+2}{2p}}\Big(\int_{t'}^{t}\int_{B_{g(s)}(x,r)}f^{p}dV_{g(s)}ds\Big)^{\frac{1}{p}}.
\end{equation}
Letting $t'=\frac{t}{2}$ and $t''\to t$, we have
\begin{equation}\label{03220023new}
\sup\limits_{B_{g(t)}(x,\frac{r}{2})}f(t)\leqslant C'_4\Big(B+\frac{4n\ell}{t}+\frac{32\alpha^2n^2}{r^2}\Big)^{\frac{n+2}{2p}}\Big(\int_{\frac{t}{2}}^{t}\int_{B_{g(s)}(x,r)}f^{p}dV_{g(s)}ds\Big)^{\frac{1}{p}},
\end{equation}
which finishes  the proof.
\end{proof} 
 
\section{An Application}\label{AnApplication}
As an application of our results from the previous sections, we prove Theorem \ref{OrbifoldThmIntro}.

\noindent {\it Proof of Theorem \ref{OrbifoldThmIntro}.}
From (2) of  Theorem \ref{mainthmIntro}, we have the scale invariant inequality
  \begin{eqnarray}
  \int_N |\Rm_{g(t)}|_{g(t)}^2 dV_{g(t)}\leq K_0
 \end{eqnarray}
 for all $t\in [0,  T),$ for some fixed $K_0< \infty$. 
From  (3) of Theorem \ref{mainthmIntro}   we have 
 \begin{eqnarray}
 \int_S^T \int_{B_{g(t)}(x,K\sqrt{T-t})}   |\Rc_{g(t)}|_{g(t)}^{2+ \alpha^3} dV_{g(t)} dt \leq c(K,\ldots) (T-S)^{1+ \frac{\al}{16}}  
 \end{eqnarray}
for all $0<S<T,$ $x \in N,$ $K \in [1,\infty)$ with  $K\sqrt{T-S } \leq 1.$  
  
 Hence, for any $x \in N,$ we can find $t_i \to T$ such that 
  \begin{eqnarray}
 	\int_{B_{g(t_i)}(x, K_i \sqrt{T-t_i})}   |\Rc_{g(t)}|_{g(t)}^{2+\alpha^3} dV_{g(t)}  
 	\leq \ep_i  
 \end{eqnarray}
 where $K_i \to \infty$ and $\ep_i \to 0$ as $i \to\infty.$
 We call such  sequences of times $(t_i)_{i \in \N}$, { \it $x-$good time sequences}. 
 We first consider {\it regular points} $x \in N$. That is $x \in N$   such that  a
    $x-$good time sequence $(t_i)_{i \in \N}$ 
  exists with 
 \begin{eqnarray}
 	\int_{B_{g(t_i)}(x, 4R \sqrt{T-t_i})}   |\Rm_{g(t)}|_{g(t)}^{2} dV_{g(t)}  
 	\leq \ep_0
 \end{eqnarray}
 for some fixed small $\ep_0>0$ and some fixed large $R>0.$
 After scaling the solution by $\frac{1}{(T-t_i)}$ and translating in time,  
 that is setting $g_i(t) = \frac{1}{(T-t_i)} g(\cdot, (T-t_i)(  t+1)  + t_i ),$    we have
 \begin{eqnarray}
  \int_{B_{g_i(-1)}(x,4R )}  |\Rc_{g_i(-1)}|_{g_i(-1)}^{2+ \alpha^3}     dV_{g_i(-1)}  \leq \ep_i    \label{Riccismallsc} \\
   \int_{B_{g_i(-1)}(x,4R )}  |\Rm_{g_i(-1)}|_{g_i(-1)}^{2}     dV_{g_i(-1)}  \leq \ep_0   \label{Riemsmallsc}
 \end{eqnarray}
 In the following, we denote all constants $c(\ep_0,\ldots)$  with the property that $c(\ep_0,\ldots) \to 0$ 
 as $\ep_0 \to 0$  simply by $\ep,$ and we refer to the metric $g_i(t)$ as $g(t)$ for ease of reading.  
 As explained in  the paper \cite{Anderson} (Proof of Main Lemma 2.2, respectively Remark 2.3 (ii) therein : see 
 the proof of Theorem B.7 in the arxiv version of the paper \cite{MSIM2} for some more details in the case $p=12$, or Appendix A in this paper), \eqref{Riccismallsc},  \eqref{Riemsmallsc} and the non-inflating/non-collapsing estimates,    $\ti 1$) of the Remark following Theorem \ref{nonin2Intro}, 
 guarantee that 
  there are  $W^{1, p}$ 
 harmonic coordinates , with $p = 4 +2\al>n=4$,   on $B_{g(-1)}(x,V)$: that is coordinates 
 $\psi  :  V \subseteq B_{g(-1)}(x, 2R) \to \B_{R}(0) = \psi(V)   $ 
with   the following properties (here $g_{ij}$ is the metric in
these coordinates)
%$g_{ij}(y):= g(\psi^{-1}(y))({}^{\phi}\part_i(\psi^{-1}(y),$
%{}^{\psi}\part_j(\psi^{-1}(y))$, and we use the notation used above)

\begin{itemize}

\item[(i)] $ (1 - \ep) \de_{ij} < g_{ij} < (1+\ep )\de_{ij}$ on $\B_{R}(0)$
\item[(ii)]  $ R^{2-\frac{n}{p} } \| D g \|_{L^{p}(\B_{R}(0))} <\ep$
\item[(iii)] $\psi: V \to \psi(V) = \B_{R}(0),$  $\psi(x) = 0,$ and $\psi$  is harmonic  : $\lap_g \psi^k = 0
  $ on $V$ for
  all $k \in \{1,\ldots,n\}.$  
\end{itemize}
 
In particular, 
 we can find coordinates  $\phi: B_{g(-1)}(x,  R) \to U \subseteq \R^n$ such that $(1-\ep) \de_{ij} \leq g_{ij} \leq (1+\ep) \de_{ij}$ on $U.$ The choice of large $  R$ then implies that 
 $|\Rm_{g(t)}|_{g(t)}    \leq \frac{\ep}{ 1+ t}$  
 on $   B_{g(t)}(y,  \frac R 4)$  for all   $t \in ( -1,0), $ $y \in  B_{g_i(-1)}(x, \frac R 2)$ in view of Perelman's Pseudolocality result, see \cite{GP}. Translating the time interval   $ ( -1,0)$ to $ (0,1)$ that is setting $\hat g( \cdot, \hat  t) =  g(\cdot, \hat t -1)$ and calling the solution $g$ again, we see that
   $|\Rm_{g(t)}|_{g(t)}  \leq \frac{\ep}{t}$   on 
 $  B_{g(t)}(y,  \frac R 4) $ for all $t \in ( 0,1) $ for any $y \in  B_{g(0)}(x,  \frac R 2 ) .$ 
 
     We note that $f(\cdot,t):= \sqrt{|\Rc_{g(t)} |_{g(t)}^2(\cdot) +\ep}$ is a smooth  function   of space and time  which satisfies
 $\partt f \leq \lap_g f +  c(n)f |\Rm_g|_g $  where $|\Rm_{g(t)}|_{g(t)}\leq \frac{\ep}{t}$ on   $B_{g(t)}(y,  R/2)$  for all $t \in ( 0,1),$ as we now explain   (in the following, $\lap$ is the Laplacian with respect to $g$  and
 $\nabla $ is the covariant derivative with respect to $g$): 
  
 \begin{eqnarray}
&&\partt \sqrt{\ep + |\Rc_g |_g^2}   \cr
&&= \frac{1}{2\sqrt{\ep + |\Rc_g |_g^2}} ( \lap  |\Rc_g |_g^2 -2|\grad \Rc_g|_g^2 + 4 \Rm_g(\Rc_g,\Rc_g) )\\
&&\text {and } \cr
&& \lap \sqrt{\ep + |\Rc_g |_g^2 } \cr
&& = g^{ij}\nabla_j(\frac{1}{2\sqrt{\ep + |\Rc_g |_g^2}} \nabla_i|\Rc_g |_g^2 ) \cr
&& =    \frac{1}{2\sqrt{\ep + |\Rc_g |_g^2}} \lap |\Rc|^2       -\ \ \ \ \frac{1}{  (\ep + |\Rc_g |_g^2)^{\frac 3 2} } |\Rc_g |_g^2 g^{ij}\nabla_i|\Rc_g |_g  \nabla_j|\Rc_g |_g \cr
&& \geq     \frac{1}{2\sqrt{\ep + |\Rc|^2}}   \lap |\Rc|^2  
 -\frac{1}{  \sqrt{\ep + |\Rc_g |_g^2} }   |\grad \Rc_g|_g^2  \cr
 && =  \frac{1}{2\sqrt{\ep + |\Rc|^2}} ( \lap |\Rc|^2   -2  |\grad \Rc|^2) 
\end{eqnarray} 
  and hence 
 \begin{eqnarray} 
 \partt \sqrt{\ep + |\Rc_g |_g^2} \leq   \lap_g \sqrt{\ep + |\Rc_g |_g^2} +    c(n)|\Rm_g|_g \sqrt{\ep + |\Rc_g |_g^2}
 \end{eqnarray}
% Hence we may apply Theorem \ref{MoserIteration} to $f$. 
 Furthermore,  (ii) of Theorem  \ref{LiuSim1}  gives us 
\begin{eqnarray}
&&  \int_{\frac t 2}^t \int_{B_{g(s)}(y,\frac{R}{4})}  |\Rc_{g(s)}|_{g(s)}^4   dV_{g(s)} ds  \cr
 && \leq  \frac{\ep}{t^{1-\alpha}}  + \ep  t^{1+\alpha}  \sup \{ |\Rc_{g(s)}|_{g(s)}^2(y,s)   \ | \  s \in [\frac t 2 , t] , \ y \in  B_{g(s)}(y,\frac{R}{2}) \}
\cr
&& \leq \frac{3\ep}{t^{1-\alpha}},\label{betterint}
 \end{eqnarray}
for all $t\in (0,1),$
where    the constant $\hat c$  of  (ii) of   Theorem  \ref{LiuSim1}   has been replaced by the better constant $\ep$: we have scaled the original solution by a constant larger than one which is large as we like,  and this leads to $\ep$ in the inequality above being as small as we like, as explained in Remark \ref{remarkext}. 
   Using      Theorem \ref{MoserIteration} for $f $  and  $p=4$, and \eqref{betterint},   
we see for all $y \in B_{g(0)}(x,\frac R 2)$,   that 
\begin{eqnarray}
 \sup_{B_{g(t)}(y,\frac{R}{8}) } |\Rc_{g(t)}|_{g(t)}  && \leq C(R,\si_0,\si_1, \ldots) \frac{1}{t^{\frac 3 4}} \Big( \int_{\frac t 2}^t \int_{B_{g(s)}(y,\frac{R}{4})}(\ep+|\Rc_{g(s)}|_{g(s)}^4)  dV_{g(s)} ds  \Big)^{\frac 1 4}\cr
&& \leq \frac{C }{ t^{\frac 3 4}}  \Big(    \frac{\ep}{t^{1-\alpha}}   \Big)^{\frac 1 4} \cr
&& \leq \frac{\ep }{ t^{1- \frac{\alpha}{4} }} 
\end{eqnarray}
which is integrable in time.

After integrating the equation $\partt g (y,t) = -2\Rc(g)(y,t)$ with respect to time for each $ y  \in B_{g(0)}(x,\frac{R}{5}),$ we also get
  $(1-2\ep) \de_{ij} \leq g_{ij} \leq (1+2\ep) \de_{ij}$ on $B_{g(0)}(x,\frac{R}{5})$,   for all $t\in (0,  1). $  Translating the time interval $(0,1)$ back to $(-1,0)$ again, we see that 
    $g_i(t) \to (g_i)(0)$ smoothly for some smooth Riemannian metric $g_i(0)$ on the open set $V:= B_{g(-1)}(x,\frac {R}{10})$ as $t\upto 0$. 
  Scaling back to the original solution, we see that, that for any     $t_i \in (0,T)$ close enough to $T,$ with $(t_j)_{j\in N}$ an $x-$ good time sequence,   that $g(t) \to g(T)$  on $V= B_{g(t_i)}(x,\sqrt{T-t_i})$ smoothly as $t\upto T$ ($i$ is now fixed) where $g(T)$ is a well defined smooth Riemannian metric on $V$ satisfying $ g(T) (1-\ep) \leq g(t) \leq g(T)(1+\ep)$ for $  t \in (t_i,T).$  
Compare  Theorem 4.5 and the proof thereof in  \cite{MSIM2}. \\
Now we consider the singular points, that is  points $x \in N$ for which for any $x$-good time sequence $(t_i)_{i\in \N},$ we have 
$$\int_{B_{g(t_i)}(x,4R\sqrt{T-t_i})}  |\Rm_{g(t_i)}|_{g(t_i)}^2dV_{g(t_i)} > \ep_0,$$
for $i$ large enough, where the $t_i$ also satisfy $t_i \upto T$ as $i\to \infty$.    
By clustering singular points together for which $\int_{B_{g(t_i)}(x,2R\sqrt{T-t_i})}  |\Rm_{g(t_i)}|_{g(t_i)}^2dV_{g(t_i)} > \ep_0,$
we see, using $\int_N |\Rm_{g(t)}|^2dV_{g(t)} \leq K_0$ and the estimates proved for the regular points above,  
that there   exist  points $p_1(t_i), \ldots , p_L(t_i) $, $L=L(\si_0,\si_1,R,K_0,\ep_0)$ and some constant $\La>0 $, such that
$\int_{B_{g(t_i)}(p_j,\La \sqrt{T-t_i})}  |\Rm_{g(t_i)}|_{g(t_i)}^2dV_{g(t_i)}   > \ep_0,$ and that 
$d(g(t_i))(p_k,p_j) \geq N(i) \sqrt{T-t_i}$ for all $k \neq j,$ where $N(i) \to \infty$ as $i\to \infty,$ and that 
all points $x \in N \backslash ( \cup_{j=1}^L  B_{g(t_i)}(p_j,\La \sqrt{T-t_i}) )$  are regular points: See the clustering argument in \cite{MSIM2}.

We consider one of these points $p_j$ and denote it by $q,$ and we scale  and translate the solution  as above: $g_i( t) = \frac{1}{(T-t_i)} g(\cdot, (T-t_i)(  t +1)  + t_i ).$ 
As explained in   Appendix B of this  paper, after increasing $\Lambda$ if necessary, 
 $ B_{g_i(-1)}(q,\La) \backslash B_{g_i(-1)}(q,1)$ has only one component  for $i$ large enough. 
Using the argument in the proof of Theorem 5.1 in  \cite{MSIM2},   we see that the distance with respect to  $d(g_i(t))$ from $q$ to 
$\boundary B_{g_i(-1)}(q,100\La) $ is bounded by some constant $A=C(n,\si_0,\si_1)\La^5$ for $t\in (-1,0].$ Also, $ B_{g_i(-1)}(q,\hat C(\La,\si_0,\si_1,p=4+\al) ) \backslash  B_{g_i(-1)}(q,\La) $ is connected  for $i$ large enough, for some $\hat C(\La,\si_0,\si_1,p=4+\al)$ as we now explain . 
Scale  $g_i(-1)$ by  $\frac 1 {\La^2},$ that is $h_i(\cdot):= \frac 1 {\La^2}g_i(-1)(\cdot)$. We still have $\Rc(h_i)_{-}\to 0$ as $i \to \infty$ locally in $L^p$ and, hence Appendix B is applicable, and hence
$ B_{h_i}(q,R) \backslash B_{h_i}(q,1)$  is connected for some $R= R(\si_0,\si_1,n,p)>0$.  
 Scaling back gives us  $ B_{g_i(-1)}(q,\hat C(\La,\si_0,\si_1) ) \backslash  B_{g_i(-1)}(q,\La) $ is connected.  Points in 
$ B_{g_i(-1)}(q,\hat C) \backslash  B_{g_i(-1)}(q,\La) $ are regular points and  can be joined to one another by curves in  $ B_{g_i(-1)}(q,\hat C) \backslash  B_{g_i(-1)}(q,\La) $ of length, with respect to $g_i(t)$ for any  $t\in (-1,0)$, less then $C(\La,\si_0,\si_1),$ as explained in the proof of Theorem 5.1 in  \cite{MSIM2}. 
This means $d(g_i(t))(q, y) \leq C_3(\La,\si_0,\si_1)$ for all $t \in (-1,0)$ for all   $y \in \boundary B_{g(-1)}(q,100\La).$\\
 Performing this analysis  for each point $p_j$ and  scaling back to the original solution, we see that this means  that ${\rm sing}(N,g(t)) \subseteq \cup_{j=1}^L  B_{g(t_i)}(p_j(t_i), C_3 \sqrt{T-t_i}) .$

Equipped with the above distance estimates  for the regular and singular part,   
as  explained in Chapter 6 of \cite{MSIM2},  we    now  have: \\
(a) $(N,d(g(t))) \to (X,d_X),$ uniformly as $t \upto T,$ 
where $X: = \{ [x] \ | \ x \in N\}$ and $[x]=[y]$ if and only if  $d(x,y,t) \to 0$ as $t\upto T\},$ 
and $d_X([x],[y]):= \lim_{t\upto T} d(g(t)(x,y)$ is well defined, and $(X,d_X)$ is a  metric space,
\\ (b) Defining $f:   N \to X$ by  $f(x) := [x]$, we see that there are at most finitely many points $x_1, \ldots, x_L \in X$ such that
$ f: V:= f^{-1}(X\backslash \{ x_1, \ldots, x_L \}) \to Z:= X\backslash \{ x_1, \ldots, x_L \}$ is a   homeomorphism and there is a natural smooth structure on $Z$ defined by $f$, and a natural smooth Riemannian metric $l$ on $Z$ given by
$l:= \lim_{t\upto T} f_*(g(t))$  in the smooth sense, and the metric space induced by $l$ on $X$ agrees with $(X,d_X)$ locally: In the case that $M=N,$ this holds  globally. \\
(c) The results of Chapter 7 of \cite{MSIM2} also hold, after  changing  (7.12) in the approximation Theorem, Theorem 7.1  to 
$\int_{B_{g_i}(p_j,N_i,)} |\Rc(g_i)|^{2+ \al^3} dg_i \to 0$  as $i\to \infty,$ where $N_i \to \infty$ as $i\to \infty$.
In particular, for any fixed $j\in \{1, \ldots, L\},$   we have $(X, i \cdot  d_X,x_j) \to \R^n/ \Gamma_j$ as $i\to \infty$ where $\Gamma_j \subseteq O(4)$ is a finite subgroup, the convergence being smooth on compact subsets away from the tip of the cone and otherwise in the distance (Gromov-Hausdorff) sense.

The results of Chapter 8 of \cite{MSIM2}  can now be applied to show that $(X,d_X)$ is a $C^0$ Riemannian orbifold,  which  is smooth away from the non-manifold points, 
in the sense of Theorem 8.3 in \cite{MSIM2}. 
Much of the proof of Theorem 8.3 is not necessary here, as a fair amount of the proof there is   concerned with the connectedness of $X \backslash \{ x_1, \ldots x_L\}$: this has been shown here at an earlier stage.

Chapter 9 of \cite{MSIM2}   now guarantees that one can extend the flow past time $T$ to $T+\si$ for some $\si>0$ using the Orbifold Ricci flow, if $M=N$ is a closed  smooth manifold.
 $\Box$

\section{ Appendix A }

We define the $W^{1, p}$  harmonic radius as follows.
\begin{defi}
Let $(M,g)$ be a smooth Riemannian manifold and $p \in M, \al \in (0,1).$ 
For  $p = 4 +\al>4$,   we define $r_{h,p}(y)$ to be the supremum over all $r>0$ such that there exist smooth  coordinates 
 $\psi  :  V \subseteq B_{g}(x, 2r) \to \B_{r}(0) = \psi(V)   $  where   $  B_{g}(x, 2r)$ is  compactly contained in $M$ 
and such that  the following holds  (here $g_{ij}$ is the metric in
these coordinates)
%$g_{ij}(y):= g(\psi^{-1}(y))({}^{\phi}\part_i(\psi^{-1}(y),$
%{}^{\psi}\part_j(\psi^{-1}(y))$, and we use the notation used above)

\begin{itemize}

\item[(i)] $ (1-\ep) \de_{ij} < g_{ij} < (1+ \ep)  \de_{ij}$ on $\B_{r}(0)$
\item[(ii)]  $ r^{2-\frac{n}{p} } \| D g \|{L^{p}(\B_{r}(0))} <\ep$
\item[(iii)] $\psi: V \to \psi(V) = \B_{r}(0),$  $\psi(x) = 0,$ and $\psi$  is harmonic  : $\lap_g \psi^k = 0
  $ on $V$ for
  all $k \in \{1,\ldots,n\}.$  
\end{itemize}
\end{defi} 

\begin{thm}\label{hrtheo}
For any  constants  $0< c_0,\al , \si_0, \si_1, \ldots $ there exists an $\ep_0>0$ (small) such  that the following holds.

 $(M^4,g)$ be a smooth manifold without boundary (not necessarily complete)  and
${B}_{3}(q,d(g)) \subseteq \subseteq M$ be an arbitrary ball which is compactly
contained in $M$. 
Assume that
\begin{itemize}
\item [(a)] $\int_{ B_{3}(q)  } |\Rm|^2 d\mu_{g} \leq \ep_0$
and $\int_{B_3(q)} |\Ric|^{2+\al} d\mu_{g} \leq c_0$, 
\item[(b)]  $\si_0 r^4 \leq \vol(B_r(x)) \leq \si_1 r^4$ for all $r
  \leq 1 $, for all $x \in B_3(q)$,
\end{itemize}
where $\ep_0= \ep_0(\si_0,\si_1,\ep)>0$ is small enough.
Then there exists a constant $V= V(\si_0,\si_1)>0$ 
such that 
\begin{eqnarray}
r_{h,4+2\al}(g)(y) \geq V \dist_{g}(y, \boundary(B_{1}(q)))
 \end{eqnarray}
for all $y \in B_1(q)$.
Here $B_3(q)$ is the {\it reference ball} used in the definition of the
harmonic radius.
\end{thm}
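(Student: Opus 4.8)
The plan is to argue by contradiction, via a point-selection and blow-up argument in the style of Anderson \cite{Anderson} (compare the proof of Theorem B.7 in \cite{MSIM2}). Fix $c_0,\al,\si_0,\si_1$ and suppose the conclusion fails for every $\ep_0>0$; then for each $i\in\N$ (taking $\ep_0=\tfrac1i$) there is a smooth $(M_i^4,g_i)$ and a ball $B_3(q_i,d(g_i))\subseteq\subseteq M_i$ with $\int_{B_3(q_i)}|\Rm_{g_i}|^2\,d\mu_{g_i}\le\tfrac1i$, $\int_{B_3(q_i)}|\Ric_{g_i}|^{2+\al}\,d\mu_{g_i}\le c_0$, the volume bounds (b) with these $\si_0,\si_1$, and a point $y_i\in B_1(q_i)$ with $r_{h,4+2\al}(g_i)(y_i)<\tfrac1i\,\dist_{g_i}\big(y_i,\boundary B_1(q_i)\big)$. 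First I would run the standard Anderson point-selection on the scale-invariant ratio $r_{h,4+2\al}(\cdot)\big/\dist_{g_i}(\cdot,\boundary B_1(q_i))$ over $B_1(q_i)$ to obtain $p_i\in B_1(q_i)$ and $\rho_i:=r_{h,4+2\al}(g_i)(p_i)$ with $\rho_i/\dist_{g_i}(p_i,\boundary B_1(q_i))\to0$ and $r_{h,4+2\al}(g_i)\ge\rho_i/2$ on $B_{g_i}(p_i,K_i\rho_i)\subseteq B_3(q_i)$ for some $K_i\to\infty$, and then rescale $\tilde g_i:=\rho_i^{-2}g_i$. Since $n=4$ makes $\int|\Rm|^2\,d\mu$ scale invariant, on $B_{\tilde g_i}(p_i,K_i)$ one then has $r_{h,4+2\al}(\tilde g_i)(p_i)=1$, $r_{h,4+2\al}(\tilde g_i)\ge\tfrac12$, $\int|\Rm_{\tilde g_i}|^2\,d\mu_{\tilde g_i}\le\tfrac1i\to0$, $\int|\Ric_{\tilde g_i}|^{2+\al}\,d\mu_{\tilde g_i}\le c_0$, and the volume bounds (b) for all $r\le K_i$.

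Next I would extract a limit. The bound $r_{h,4+2\al}(\tilde g_i)\ge\tfrac12$ gives, by definition, harmonic charts in which $\tilde g_i$ is uniformly bounded in $W^{1,4+2\al}$ (hence in $C^{0,\beta}$, $\beta=\tfrac{2\al}{4+2\al}$); inserting the uniform $L^{2+\al}$ bound on $\Ric_{\tilde g_i}$ into the harmonic-coordinate form of the Ricci identity, $\tfrac12 g^{ab}\partial_a\partial_b(\tilde g_i)_{jk}=-(\Ric_{\tilde g_i})_{jk}+Q(\tilde g_i,\partial\tilde g_i)$, upgrades this to uniform $W^{2,2+\al}_{\mathrm{loc}}$ bounds, whence $\Rm_{\tilde g_i}$ is bounded in $L^{2+\al}_{\mathrm{loc}}$. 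By the Anderson--Cheeger convergence theory in this regularity (see \cite{Anderson}) together with the two-sided volume bounds, a subsequence of $(M_i,\tilde g_i,p_i)$ converges to a complete pointed $C^{0,\beta}\cap W^{2,2+\al}_{\mathrm{loc}}$ manifold $(M_\infty,g_\infty,p_\infty)$ --- the metrics converging in $C^{0,\beta}_{\mathrm{loc}}$ and weakly in $W^{2,2+\al}_{\mathrm{loc}}$, and $\Rm_{\tilde g_i}\rightharpoonup\Rm_{g_\infty}$ weakly in $L^{2+\al}_{\mathrm{loc}}$. As $\|\Rm_{\tilde g_i}\|_{L^2(K)}\to0$ on every compact $K$ and $L^{2+\al}(K)\subseteq L^2(K)$, the weak limit must vanish, $\Rm_{g_\infty}\equiv0$, so by elliptic regularity in harmonic coordinates $g_\infty$ is a smooth flat metric; the volume bounds pass to the limit as $\si_0 r^4\le\vol_{g_\infty}(B_{g_\infty}(p_\infty,r))\le\si_1 r^4$ for all $r>0$, and $M_\infty$ is a topological manifold. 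A complete flat manifold is $\R^4/\Gamma$ for a discrete group $\Gamma$ of isometries acting freely: an infinite $\Gamma$ forces strictly sub-quartic volume growth, contradicting the lower bound, and a finite nontrivial $\Gamma$ has a common fixed point and makes the quotient fail to be locally Euclidean there, contradicting manifoldness; hence $\Gamma$ is trivial and $(M_\infty,g_\infty)=(\R^4,g_{\mathrm{eucl}})$.

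Finally I would derive the contradiction from stability of the harmonic radius. On $(\R^4,g_{\mathrm{eucl}})$ the identity chart gives $r_{h,4+2\al}=+\infty$; fixing $R>1$ and taking $i$ so large that $K_i>2R$ and $\tilde g_i$ is $C^{0,\beta}$- and $W^{2,2+\al}$-close to $g_{\mathrm{eucl}}$ on $B_{\tilde g_i}(p_i,2R)$, one solves the Dirichlet problem for $\tilde g_i$-harmonic coordinates with near-Euclidean boundary data and checks (i)--(iii) of the Definition on $B_{\tilde g_i}(p_i,R)$, i.e. $r_{h,4+2\al}(\tilde g_i)(p_i)\ge R>1$; this is the standard lower semicontinuity of the harmonic radius under $C^{0,\beta}\cap W^{2,2+\al}$ convergence \cite{Anderson}. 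Since $r_{h,4+2\al}(\tilde g_i)(p_i)=1$, this is a contradiction, and it proves the estimate --- with the admissible $V$ determined only by the point-selection and compactness constants, hence only by $\si_0,\si_1$ (for fixed $\al,c_0$). I expect the main obstacle to be carrying the whole argument out in the low-regularity function spaces forced by the $W^{1,4+2\al}$-harmonic radius: running the harmonic-coordinate elliptic estimates and the compactness in $W^{1,p}/W^{2,q}$ rather than $C^{k,\al}$, justifying the weak lower semicontinuity that forces $\Rm_{g_\infty}\equiv0$, and verifying that the point-selection is compatible with using the scale-invariant quantity $\int|\Rm|^2\,d\mu$ --- available only because $\dim=4$ --- as the sole smallness input on the geometry.
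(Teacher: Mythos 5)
Your proof is correct and takes essentially the same approach as the paper's (Anderson-type contradiction argument: point-selection, blow-up, harmonic-coordinate compactness with the two-sided volume bounds, Euclidean limit, and lower semicontinuity of the $W^{1,4+2\al}$ harmonic radius). The one minor variant is in how you establish flatness and regularity of the limit: the paper first notes that after rescaling $\int|\Rc_{\tilde g_i}|^{2+\al}dV_{\tilde g_i}=\rho_i^{2\al}\int|\Rc_{g_i}|^{2+\al}dV_{g_i}\le c_0\rho_i^{2\al}\to0$ (your write-up records only $\le c_0$) and uses this to upgrade to \emph{strong} $W^{2,2+\al}_{\rm loc}$ convergence before invoking $\int|\Rm|^2\to0$, while you instead kill $\Rm_{g_\infty}$ directly from the weak $W^{2,2+\al}$ limit and $\|\Rm_{\tilde g_i}\|_{L^2}\to0$ --- both routes close the argument.
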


\begin{proof}

The proof is by contradiction, and essentially  the same as that presented in the proof of Theorem  B.7 in the Arxiv version of the paper  \cite{MSIM2}. 
The  proof method given there, is essentially that given in the proof of Main
Lemma 2.2 of  \cite{Anderson} (see Remark 2.3 (ii) there) using some notions from
\cite{AnCh} on the $W^{1,p}$ harmonic radius.

Assume the result is false. Then,   as  explained in \cite{MSIM2}, we obtain a sequence of points $y_i$ and Riemannian metrics (after scaling), $g_i$ defined on $M_i$ such that $B_{g(i)}(y_i,i) $  is compactly contained in $M_i$ and
\begin{eqnarray}
&& r_{h}(g(i))(y_i) =1,\label{firsthar}\\ 
&& r_{h}(g(i))(y) \geq \frac 1 2 \  \text{ for all }\  y \in B_{g(i)}(y_i,i)\label{secondhar}\\
&& \int_{  B_{g(i)}(y_i,i)} |  \Rc|^{2+\al} d\vol_{  g(i)}    \to 0 \mbox{  as  } \ i \to
\infty  \label{Rceqhar} \\
&& \int_{  B_{g(i)}(y_i,i) } | \Riem|^2 du_{  g(i)}  \leq \frac{1}{i} \to 0 \mbox{  as  } \ i \to
\infty  \label{Rmeqhar} 
\end{eqnarray}
We find a covering of $   B_{g(i)}(y_i,L)$ for any fixed large $L$ by  harmonic coordinates,  with  charts  
$\psi_{i,s}:  V_{i,s} \to B_{\frac 1 2}(0).$  
Exactly as explained in \cite{MSIM2}, it then follows that the Gromov-Hausdorff limit 
$\lim_{GH}(M_i,g(i),y_i) = (X,h,p) $ exists and is a $C^0$ Riemannian manifold.
Writing $h$ in the limiting charts and $g(i)$ in the harmonic coordinate charts we see that 
$g(i) \to h$ {\bf weakly } locally (that is written in these coordinates)  in $W^{1,4+2\alpha}$.
 
Using the facts, that the metric itself solves the equation
$$g^{ab}\partial_a \partial_b g_{kl} = (g^{-1}*g^{-1}*\partial g* \partial g)_{kl}- 2 \Rc(g)_{kl}$$ in harmonic coordinates,  and that equation \eqref{Rceqhar} holds, 
we may use the elliptic $L^p$ theory and the Sobolev embedding theorem,  to obtain  that this convergence is in fact {\bf strong} locally, and occurs also in the $W^{2,2+\alpha}$ sense strongly locally.
It then follows that $h$ satisfies $$h^{ab}\partial_a \partial_b h_{kl} = (h^{-1}*h^{-1}*\partial h* \partial h)_{kl} $$ in the $W^{2,2+\alpha}$ sense and we may use the 
  elliptic $L^p$ theory and the Sobolev embedding theorem again, to show that   the limit metric $h$ is $C^{\infty} $
  in the constructed coordinates. 
The transition maps 
$s_i:= s_{i,r,t}:= (\phi_{i,r})^{-1} \of \phi_{i,t}$ also satisfy an elliptic equation, 
\begin{eqnarray}
0 &&= g(i)^{ab}\partial_a \partial_b s_i 
\end{eqnarray}
Using elliptic $L^p$ theory and the Sobolev embedding theorem now for the transition functions, with the fact that $g(i)$ converges locally in the $C^{\beta}$ norm, we obtain that the transition functions of the limiting manifold  are $C^{\infty}$.
The non-collapsing condition, and equation \eqref{Rmeqhar} now show us that $(X,h,p)$ is in fact smooth and isometric to the standard four dimensional euclidean space with the standard metric.

For the readers convenience, we present more details.

Let us denote  the metric $g(i)$ in
the local coordinates given by $\phi_{i,r}$ by
$g(i)_{kl}$, and $h$ in the local harmonic coordinates given by $\phi_r$ by $h_{kl}$: $r$ is fixed for the moment.

By construction  we have: $g(i)_{kl} \to h_{kl}$ in $C^{0,\al}(B_\ep(u))$ for any 
$B_{2\ep}(u) \subseteq B_{1/100}(0) \subseteq \R^4$  and 
  $g(i)_{kl}
\to h_{kl}$ {\bf weakly} in $W^{1,4+2\al}(B_{\ep}(u))$, after taking a
subsequence (see for example   Section 8  of \cite{Evans}).
In particular 
%Note, from the theorem of Rellich/Kondrachov (see for example \cite{Evans}, Theorem 1 of
%Section 5.7), 
this tells us that $g(i) \to h$
{\it strongly} in $L^{p}(K)$ for any $p <\infty.$
The metric $g_{kl} = g(i)_{kl}$ satisfies
\begin{eqnarray}
 g^{ab}\partial_a\partial_bg_{kl} =  (g^{-1} * g^{-1} * \partial g
 * \partial g)_{kl} -2 \Rc(g)_{kl} \label{toodle}
\end{eqnarray}
smoothly in $B_{1/100}(0)$, since the coordinates are  harmonic.
Here the $(0,2)$ tensor $(g^{-1} * g^{-1} * \partial g * \partial g)$ can be written
explicitly, but in order to make the argument more readable we use
this star notation. 
This tensor has the property that is linear in all of its terms: 
for $Z,\ti Z,W,\ti W$ symmetric and positive definite local $(2,0)$ Tensors,
and $R,\ti R, S,\ti S$ local $(0,3)$ Tensors,
we have $Z*W*R*S$ is a local $(0,2)$ Tensor, with the property that
$ (Z + \ti Z)*W*R*S = Z*W*R*S + \ti Z*W*R*S$ and
$Z*(W+ \ti W)*R*S = Z*W*R*S + Z*\ti W*R*S$ and so on.
Furthermore $|Z*W*R*S|_g \leq c|Z|_g |W|_g |\Sc|_g |S|_g$, where $c$
depends only on $n$, $n=4$ here. 
We know from the construction that $\Ricci(g) \to 0$ in $L^{2+\al}$ and the
other terms on the right hand side are bounded in $L^{2+\al}$ (because
$\partial g$ is bounded in
$L^{4+2\al}$ and $g,g^{-1}$ are bounded). Hence the right hand side is bounded
in $L^{2+\al}$ by a constant $c$ which doesn't depend on $i$, if $i \in \N$
is sufficiently large.
Also the terms $g^{ab}$ in front
of the first and second derivatives are
continuous, bounded and  satisfy $0< c_0|\psi|^2  \leq
g^{ij}\psi_i\psi_j \leq c_1|\psi|^2$ for $c_0,c_1$ independent of $i
\in \N$. 
Hence, using the $L^p$ theory (see for example \cite{GT} Theorem 9.11),
$|g(i)|_{W^{2,2+\al }(K)} \leq \int_{B_{1/100}(0)} |g(i)|^{2+\al } + c \leq \ti c$ on any smooth compact subset
$K \subseteq  B_{1/100}(0)$, where $\ti c$ is a constant which is
independent of $i$ (but does depend on $K$), and
in particular, using  the Theorem
of Rellich/Kondrachov (see for example \cite{Evans}, Theorem 1 of
Section 5.7), we see that $g(i)$  converges to $h$ 
in $W^{1,q  }(K)$ for any  $q  < \frac{4(2+\al )}{4 - (2+\al )}
=  \frac{4(2-\al ) + 8\al }{2-\al }= 4 + 8\frac{\al}{2-\al} ,$ for example $q = 4 +\si $  with $\si =8\frac{\al}{2-\al/2} > 2\alpha$  on smooth compact
subsets $K$ of $B_{1/100}.$  
We also have
\begin{eqnarray}
 h^{ab}\partial_a\partial_b g_{kl} = && (h^{ab}-g^{ab})\partial_a\partial_b g_{kl} +
g^{ab}\partial_a\partial_b g_{kl} \cr
= && (h^{ab}-g^{ab})\partial_a\partial_bg_{kl} 
 + (g^{-1} * g^{-1}* \partial g * \partial g)_{kl} -2 \Ricci(g)_{kl}  
\end{eqnarray}
Hence, for $g = g(i)$ (written in the coordinates given by
$\phi_{i,r}$) and $\ti g = g(j)$ (written in the coordinates given by
$\phi_{j,r}$) , $i,j \in \N$, we have
\begin{eqnarray}
 h^{ab} \partial_a\partial_b (g - \ti g)_{kl} 
 = && \curlL_{kl}:= (h^{ab}-g^{ab})\partial_a\partial_b g_{kl} -
 (h^{ab}-\ti g^{ab})\partial_a\partial_b \ti g_{kl} \cr
&& \ \ \
 + (g^{-1} * g^{-1}* \partial g * \partial g)_{kl} - ((\ti g)^{-1} *
 (\ti g)^{-1}* \partial \ti g * \partial \ti g)_{kl} \cr
&& -2 \Ricci(g)_{kl} +2\Ricci(\ti g)_{kl}. \label{fiddlyg1}
\end{eqnarray}
The right hand satisfies: for all $\ep >0$ there exists an $N\in
\N$ such that $|\curlL|_{L^{2+\al}(K)} \leq \ep$ if $i,j \geq N$, since 
$\partial g * \partial g$   converges to 
$\partial h * \partial h$  in $L^{2+\al}$ (since $g, g^{-1}$ converge to $h$,$h^{-1}$ in $W^{1,4+2\al}$ ),  
 $g(i) \to h, g(i)^{-1} \to h^{-1}$ in $C^{\al}$ on
smooth compact subsets   $K$ of $
B_{1/100}(0),$ and $\Rc(g_i) \to 0$ in $L^{2+\al}$ on compact subsets.

 Hence, we can rewrite \eqref{fiddlyg1} as 
\begin{eqnarray}
 h^{ab} \partial_a\partial_b (g(i) - g(j)) _{kl} 
 = && f(i,j)_{kl} \label{fiddlygnewer}
\end{eqnarray}
with $\int_K |f(i,j)|^{2+\al} \leq \ep(i,j)$, and $\ep(i,j) \leq \ep$ for
arbitrary $\ep >0$ if $i, j\geq N(\ep)$ is large enough.

Hence, using the $L^p$ theory again  (Theorem 9.11 in \cite{GT}), we get 
\begin{eqnarray}
 | g(i) -  g(j)|_{W^{2,2+\al}(\ti K)} && \leq c(\int
 |\curlL|_{K}^{2+\al} + \int  |g(i) -  g(j)
 |_{K}^{2+\al})\cr
&& \leq \de(i,j)
\end{eqnarray}
on any compact subset $ \ti K \subset \subset  K \subseteq B_{1/100}(0)$ 
where $\de: \N \times \N \to \R^+$ satisfies: for all $\ep>0$ there
exists an $N  =N(\ep) \in \N$ such that $\de(i,j) \leq \ep$ if $i,j \geq N$.
This implies $g(i)$ is a Cauchy sequence in  $W^{2,2+\al}(\ti K)$
and hence $g(i) \to h$ strongly in $W^{2,2+\al}(\ti K)$ and $h$ is
in $W^{2,2+\al}(\ti K)$.
Using these facts in \eqref{toodle}, and taking a limit as $i \to
\infty$ in $L^{2+\al}(K)$, we also see that
\begin{eqnarray}
h^{ab} \partial_a\partial_b (h_{kl})  = (h^{-1}*h^{-1}*\partial h
* \partial h)_{kl}  \label{inthearglp} 
\end{eqnarray}
must be satisfied in the $L^{2+\al}$ sense.
Standard  $L^p$ theory and  the Sobolev inequality now implies that $h \in W^{1,s}$ for all $ s\in(1,\infty)$.
We perform one iteration.
Let $h\in W^{1, 4 +\beta}$ (for example this holds in the first iteration step for $\beta = \al$).
Then this means that the right hand side of  \eqref{inthearglp}  is in  $   L^{2+ \beta/2}$. Hence, the $L^p$ theory tells us that on compact subsets, 
$ h \in W^{2,2+ \beta/2}$.
If $\beta/2\geq 2$ then the Sobolev embedding theorem tells us $h\in W^{1,q}$ for all $q \in (1,\infty)$ on compact subsets.
So assume $\beta/2 <2$. 
From the   Sobolev embedding theorem, 
$h \in W^{1, \frac{4(2+\beta/2)}{4-(2+\beta/2)}}=  W^{1,4 + \frac{4\beta}{2-\beta/2}}$
and hence  $h \in W^{1, 4 + 2 \beta}$.
Continuing in this way, we see   $h \in W^{1, q}$ for all $q \in (1,\infty)$ on compact subsets.
Hence the $L^p$ Theory tells us $h \in W^{2,q}$  for all $q \in (1,\infty)$ on compact subsets.
We may differentiate the equation using difference-quotients, and we get that 
 $h \in W^{3,q}$ for all $q \in (  1,\infty)$ on compact subsets, and so on.

We define $s_{i,r,t}= (\phi_{i,r})^{-1} \of \phi_{i,t} $ to be the the transition maps on $M_i.$ 
The equation satisfied by $s_i:= s_{i,r,t}$ 
is
\begin{eqnarray}
g(i)^{ab}\partial_a \partial_b s_i = 0 \label{eqnforsi}
\end{eqnarray}
For  $x \in U_{r}\cap U_{t}$ in $X,$ 
$s_{i,r,t}= (\phi_{i,r})^{-1} \of \phi_{i,t}:B_{2\ep}(v) \to \R^4$  is well defined 
 with $x = \phi_t(v)$ for some small $\ep>0$, and satisfies
$s_{i,r,t} \to s_{r,t} = (\phi_{r})^{-1} \of \phi_{t} $ weakly in $W^{2,2+\al}(B_{\ep}(v))$.  
Using the $L^p$ theory, we see that $s_i$ is bounded in
$W^{2,p}(B_{\ep/2}(v))$ for all $p \in (1,\infty)$, independently of $i$.
Hence, we have
\begin{eqnarray}
g(i)^{ab}\partial_a \partial_b (s_i -s_j) && =
-g(i)^{ab}\partial_a \partial_b s_j\cr
&& = (g(j) -g(i))^{ab} \partial_a \partial_b s_j
\end{eqnarray}
and the right hand side goes to $0$ in $L^p (B_{\ep/2}(v))$, as does
$s_i-s_j$, as $i,j \to \infty$, and hence
using the $L^p$ theory again, $s_i \to s$ in $W^{2,p}(B_{\ep/4}(v)$ for any $p\in (1,\infty).$
Hence, Equation \eqref{eqnforsi} holds in the limit, 
\begin{eqnarray}
h^{ab}\partial_a \partial_b s =0.
\end{eqnarray}
Using the regularity theory for elliptic equations (for example the
argument we used above to show that $h$ is smooth), we see that
$s$ is smooth. That is $(X,h,p)$ is a $C^{\infty}$ smooth Riemannian manifold. 
Using equation \eqref{Rmeqhar} we see that 
$(X,h,p)$ is  flat  and the volume growth estimates (b) of the assumptions, show us that $(X,h,p)$  has  Euclidean volume growth and hence $(X,h,p)$  is isometric to euclidean space with the standard metric.

The rest of the argument is identical to that of \cite{MSIM2}. The main points of the argument are: \\
It also holds, that\\
1) $(M_i,g(i),p_i) \to (X,h,p)$ in the pointed Gromov-Hausdorff $W^{1,4+\al}$  sense as
$i \to \infty$: for all $l$, there exists a smooth map $F_{i,l}:  P_l  \subseteq X \to
M_i$, such that $F_{i,l}(p) =p_i$ and $F_{i,l}: P_l\to M_i$
is a $C^{\infty}$ diffeomorphism onto its image, ${{}^h B_{l}}(p)
\subseteq P_l$,
and $F_{i,l}^*(g(i)) \to h$ in $W^{1,4+\al}(K)  $ strongly
for any compact set $K \subseteq U_l$, where $U_l$ is open in $X$.\\
 \\
2) The harmonic radius of $(\R^n,h)$ is infinity, since it is 
euclidean space.  This fact, the shown convergence properties and  the fact that $r_{h}(g(i))(y_i) =1$ now lead to  a contradiction.  

\end{proof}

\section{Appendix B}

The following   is essentially the {\it  Neck Lemma } of  Anderson/Cheeger \cite{AnCh2}  in the setting that Ricci curvature is bounded from below in the $L^p$ sense 
for $p>n$.   Their result and proof assumed the stronger condition that  Ricci curvature is bounded from below, and it is necessary to modify their proof at the appropriate points.  
\begin{thm}
Let  $ \si_0,\si_1>0,$ $ \infty > p > \frac{n}{2}$   be given constants. Then there exists an $R 
= R(\si_0,\si_1, p,n)>>1$  and an  $0< \ep_0  = \ep_0(\si_0,\si_1, p,n)$    such that  if   
 $(M^n,g)$   is a smooth, connected manifold with 
\begin{itemize}
\item[(i)]   $B_{g}(x_0,400R) \subset \subset M,$
\item[(ii)] $\int_{B_{g}(x_0,400R)} |\Rc_{-}|^p d\vol_g \leq \ep_0 $  
\item[(iii)]  $\si_1 r^n \geq  \vol_g(B_{g}(x,r)) \geq \si_0r^n $ for all $ x \in  B_{g}(x_0,400R)$   for all
$ r\in (0,400R)$,
\end{itemize}
then   
$   B_{g}(x_0,R)  \backslash B_{g}(x_0,1)$ is connected. 
\end{thm}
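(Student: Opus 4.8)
The plan is to argue by contradiction through a blow-down. Assume the statement fails; then for every $R$ and every $\ep_0$ there is a counterexample. Choose $R_i\to\infty$ and $\ep_{0,i}:=R_i^{-(2p-n)-1}$ (an admissible choice, and $\ep_{0,i}\to0$ since $2p-n>0$), producing smooth connected manifolds $(M_i^n,g_i)$ with base points $x_i$ satisfying (i)--(iii) for $(R_i,\ep_{0,i})$ but with $B_{g_i}(x_i,R_i)\setminus B_{g_i}(x_i,1)$ disconnected. Rescale $\hat g_i:=R_i^{-2}g_i$. Condition (iii) is scale invariant, so $\si_0r^n\leq\vol_{\hat g_i}(B_{\hat g_i}(x,r))\leq\si_1r^n$ for all $x\in B_{\hat g_i}(x_i,400)$ and all $r\in(0,400)$, and $B_{\hat g_i}(x_i,400)\subset\subset M_i$; the disconnected annulus becomes $B_{\hat g_i}(x_i,1)\setminus B_{\hat g_i}(x_i,s_i)$ with $s_i=1/R_i\to0$; and, since the Ricci tensor is unchanged by rescaling while its norm acquires a factor $R_i^2$ and the volume element a factor $R_i^{-n}$, $\int_{B_{\hat g_i}(x_i,400)}|\Rc_-|^pdV_{\hat g_i}=R_i^{2p-n}\ep_{0,i}=R_i^{-1}\to0$. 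Thus I have reduced to a sequence with uniform two-sided volume regularity, a disconnected annulus of inner radius $s_i\to0$, and vanishing $L^p$ Ricci-lower-curvature defect.

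Next, pass to a limit. By the integral-Ricci-curvature comparison theory (Petersen--Wei) --- this is the point where the Bishop--Gromov step in the proof of \cite{AnCh2} must be replaced --- together with the non-collapsing bound, $(M_i,\hat g_i,x_i)$ is precompact in the pointed Gromov--Hausdorff topology; let $(X,d_X,x_\infty)$ be a subsequential limit, with limit measure $\mu$ satisfying $\si_0r^n\leq\mu(B_X(x,r))\leq\si_1r^n$ for $r\leq200$. Since the $L^p$ Ricci defect tends to $0$, $X$ is a non-collapsed limit space of nonnegative Ricci curvature in the sense of Cheeger--Colding: its regular set is open, dense and connected, its singular set is closed of codimension $\geq2$, metric balls are connected, and hence $B_X(x,t)\setminus\{x\}$ is connected for every $x\in X$ and $t>0$ (here $n\geq2$ is used; for $n=1$ the assertion is false, so this case is tacitly excluded). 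The same structural facts apply to any further blow-up or blow-down limit appearing below, since all of these inherit the two-sided volume bounds and the vanishing $L^p$ defect.

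It remains to show that the disconnectedness of $B_{\hat g_i}(x_i,1)\setminus B_{\hat g_i}(x_i,s_i)=U_i\sqcup V_i$ (with $U_i,V_i$ nonempty) forces a disconnection of a punctured ball, or --- after one more rescaling --- of a non-degenerate annulus, in some such limit, contradicting the previous paragraph. Since a connected component of an open set has topological boundary inside the boundary of that set, $\partial U_i,\partial V_i\subseteq S_{\hat g_i}(x_i,s_i)\cup S_{\hat g_i}(x_i,1)$; with non-collapsing this gives that any component reaching a definite radius $c$ contains a ball of radius comparable to $c$, hence has $\mu$-measure $\gtrsim\si_0c^n$, so only boundedly many components reach any given definite radius. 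In the generic situation both $U_i$ and $V_i$ reach radius $1/4$ with uniformly separated closures; then $\overline{U_i},\overline{V_i}$ converge to disjoint compact subsets of $\overline{B_X(x_\infty,1)}$. Taking $a_i\in U_i$, $b_i\in V_i$ at distance $1/4$ from $x_i$, a minimizing $\hat g_i$-geodesic from $a_i$ to $b_i$ has length at most $1/2$, while reaching the sphere of radius $1$ would cost length at least $3/2$; hence this geodesic --- which cannot stay in the annulus, as $a_i,b_i$ lie in different components --- must pass through $B_{\hat g_i}(x_i,s_i)$. Letting $i\to\infty$, $a_\infty$ and $b_\infty$ can be joined only through $x_\infty$, so they lie in distinct components of $B_X(x_\infty,1)\setminus\{x_\infty\}$, a contradiction.

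The remaining task --- and the main obstacle, where the proof genuinely departs from \cite{AnCh2} (in which a fixed lower Ricci bound was available) --- is to rule out the degenerate configurations in which one of $U_i,V_i$ collapses onto the shrinking inner sphere, or clings to an infinitesimally thin shell, so that the disconnection is invisible in the blow-down limit. The key point is that any such configuration forces a "neck" at some small scale $\delta_i\to0$ along which the annulus pinches off, and any such neck carries at least $c(n,p)\,\delta_i^{n-2p}$ of $\int|\Rc_-|^p$; since $n-2p<0$ this tends to $+\infty$. Equivalently: rescaling $\hat g_i$ to the scale $\delta_i$ of the putative pinch and re-centering at a point there, the $L^p$ Ricci defect of the new limit still vanishes, so that limit is again a non-collapsed space of nonnegative generalised Ricci curvature, while the disconnection --- chosen so as to survive the rescaling --- persists either as a disconnection of a punctured ball, of a non-degenerate annulus, or (when the pinch scale and the radius scale coincide and a Cheeger--Gromoll splitting is produced) of a slab $(0,1)\times Z$ with $Z$ the $(n-1)$-dimensional factor. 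Since punctured balls, non-degenerate annuli, and such slabs (with $Z$ connected, being a length space) are all connected in these limits, every case yields a contradiction, proving the theorem.
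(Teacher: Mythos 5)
The decisive gap is in your final paragraph, the one you yourself flag as the main obstacle. The claim that any degenerate pinching configuration ``forces a neck at scale $\delta_i$'' which ``carries at least $c(n,p)\,\delta_i^{n-2p}$ of $\int|\Rc_{-}|^p$'' is asserted with no argument, and it is not a correct local statement: a thin neck modelled on a cylinder $S^{n-1}(\delta)\times[0,1]$ has $\Rc_{-}\equiv 0$ on the neck region. What rules out small necks under almost nonnegative Ricci plus two-sided volume bounds is not negative curvature concentrated on the neck, but a relative volume comparison along the geodesics that must pass through the neck --- which is precisely the quantitative content of the theorem you are trying to prove, now reasserted at the neck scale. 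So this step is circular. Moreover the reduction to the three model cases (punctured ball, nondegenerate annulus, slab) is not argued: you do not say how the scale $\delta_i$ is selected, why the disconnection ``survives'' the rescaling, nor, in your ``generic'' case, why both components reach radius $1/4$ with uniformly separated closures (a component may hug the shrinking inner sphere, which is exactly the situation you then defer to the unproved degenerate case). A second, related problem is the wholesale appeal to Cheeger--Colding structure theory (connected punctured balls, connected regular set, codimension-two singular set, splitting) for noncollapsed limits with only an $L^p$, $p>\frac n2$, bound on $\Rc_{-}$ tending to zero; results of this type exist in parts of the literature, but not off-the-shelf in the form you use them, and connectedness of small annuli/punctured balls in such limit spaces is morally equivalent to the neck lemma itself, so without a precise citation this too begs the question.

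For contrast, the paper's proof never passes to a limit space: it argues directly on the given manifold, following Anderson--Cheeger but replacing Bishop--Gromov by the Petersen--Wei/Dai--Wei relative volume comparison with an $L^p$ error term. One shows (Fact 1) that a component reaching past radius $R$ has volume $\geq \hat\sigma R^{n-1}$ in some unit-width annulus $A_g(S,S+1)$, $S\in[R/4,2R]$, and (Fact 2) that minimizing geodesics from a point $z$ of the other component at distance $L=R/50$ to that annulus must thread through $B_g(x_0,1)$, hence at parameter $\approx L$ they all lie in a ball of bounded radius about $x_0$. Comparing the swept volumes at parameters $\approx L$ and $\approx S+L$ via the integral comparison gives $\sigma_1\,1000^n\geq C_5(p,n,\sigma_0,\sigma_1)R^{n-1}-R^{k(p,n)}\ep_0$, a contradiction once $R$ is large and $\ep_0\leq R^{-k(p,n)}\sigma_1$. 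If you want to rescue your blow-down scheme, you would have to prove, not assert, the quantitative neck estimate in your last paragraph --- and the natural proof of that estimate is essentially this comparison argument, so the detour through limit spaces buys nothing here.
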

\begin{rem}
It follows that $   B_{g}(x_0,K)  \backslash B_{g}(x_0,k)$ is connected for all $K\geq R,0<k\leq 1$ with 
 $B_{g}(x_0,4K) \subset \subset M,$ since we can connect any point  in $B_{g}(x_0,K)  \backslash B_{g}(x_0,k)$
 to a point  in $B_{g}(x_0,R)  \backslash B_{g}(x_0,1)$ using a radial geodesic in $B_{g}(x_0,K)  \backslash B_{g}(x_0,k).$
\end{rem}

\begin{proof} 
Proof by contradiction. Assume $(M^n,g)$   is a smooth, connected manifold  
and everything is as in (i),(ii),(iii) of the theorem. 
We assume that $   B_{g}(x_0,R )  \backslash B_{g}(x_0,1 )$ has at least two components. 
We show for $R \geq  R_0( \si_0,\si_1, p,n)$ sufficiently large, that this assumption leads to a contradiction. 
 
We recall the following estimates from Petersen/Wei \cite{PeWei}   and  Dai/Wei \cite{DaiWei} (Book Chapter 5, proof of Theorem 5.3.1).  For $f(t):= \int_{\theta \in Z}\frac{\curlA(t,\theta)}{t^{n-1} } d\theta $  with $\curlA(t,\theta)d\theta$ being the volume form at $(t,\theta)$ written with respect to  Geodesic polar coordinates centred at some point $m \in B_{g}(x_0,20R),$ we have  
\begin{eqnarray}
\partt f^{\frac {1}{2p-1}}(t)  \leq  C(n,p) 
 \|\Rc_{-}\|_{L^p(\hat Z)}^{\frac{p}{2p-1}} t^{-\frac {n-1}{2p-1}}.
\end{eqnarray} 
for all $t\leq T_{\theta}$ where $ T_{\theta}$ is the first cutpoint of the unit-speed geodesic going in the direction $\theta$ and  $\hat Z:= \{   \exp(s \theta )  \ | \ s \in [0,200R],   \theta \in Z \}.$ 
We
define  $\curlA(t,\theta):=0$ for any $t \geq T_{\theta}$  and integrating the above inequality from $s<r$ to $r$ we get 
\begin{eqnarray} 
f^{\frac {1}{2p-1}}(r)-f^{\frac {1}{2p-1}}(s)
&& \leq  C_2(n,p)t^{\al(n,p)}|_s^r
 \|\Rc_{-}\|_{L^p(\hat Z)}^{\frac{p}{2p-1}}\cr
&& \leq  C_2(n,p)r^{\al} \|\Rc_{-}\|_{L^p(\hat Z)}^{\frac{p}{2p-1}},
\end{eqnarray} 
where $1>\al:= 1 -\frac{ n-1}{2p-1}>0$. Related, similar and other  inequalities  of this type   can   be found in Section 2.4 in  \cite {GTZLZ}.
Hence, for example  using $(|a|+ |b|)^{2p-1} \leq 2^{2p-1}(|a|^{2p-1} 
+  |b|^{2p-1}),$ we see that 
\begin{eqnarray} 
  \int_{\theta \in Z} \curlA(r,\theta) d\theta  
&&  \leq C_3(p,n)\frac{r^{n-1}}{s^{n-1}}  (
\int_{\theta \in Z} \curlA(s,\theta) d\theta  + s^{n-1}C_3(p,n) r^{\al(2p-1)}
\|\Rc_{-}\|_{L^p(\hat Z)}^{ p})
\end{eqnarray} 
holds, and hence
\begin{eqnarray} 
 C_4(p,n) \frac{s^{n-1}}{r^{n-1}} \int_{\theta \in Z} \curlA(r,\theta) d\theta  
&&  \leq   
\int_{\theta \in Z} \curlA(s,\theta) d\theta  + r^{\al(2p-1) + (n-1)}
     \|\Rc_{-}\|_{L^p(\hat Z)}^{ p } )
\end{eqnarray} 
where $C_4(p,n)>0$.
Keeping the convention 
$\curlA(t,\theta):=0$ for any $t \geq T_{\theta}$ 
we may integrate   with respect to $t$   again to obtain 

\begin{eqnarray}
&& \vol_g (  \{ \exp(v   t ) \ | \  v \in Z,  t \in [s,s+J] \}) \cr
&&   \ \ \ \geq  C_4(p,n) \frac{s^{n-1}}{(r+J)^{n-1}} \vol_g ( \{ \exp(v   t ) \ | \  v \in Z,  t \in [r,r+J]  \} ) \cr
&& \ \ \ \ \  \ \ \ -   J (r+J)^{\al(2p-1) + (n-1)} 
\|\Rc_{-}\|_{L^p(\hat Z)}^p 
\end{eqnarray}

In the following let $A_{g}(S,T) = \{ x \in M \ | \ S  \leq d_g(p,x) \leq T\}.$
We are assuming that there are two components of $B_{g}(p,10R) \backslash B_{g}(p,1)$. For any component $K$ and 
  any $ z\in K \cap (B_{g}(p,R))^c  $ we see, using the triangle inequality, 
 $B_{g}(z,R/2)  \subseteq  B_{g}(p,2R) \backslash B_{g}(p,R/4)$
and hence $ B_{g}(z,R/2) \subseteq K $  and hence,  using the volume estimates
$\vol_g( K \cap A_{g}(R/4,2R) ) \geq \vol_g( K  \cap  B_{g}(z,R/2)) = \vol_g(    B_{g}(z,R/2)) \geq \ti \si R^n.$ Hence, \\ ({\bf Fact 1}) : there must be a  $S \in [R/4, 2R]$ such that $\vol_g(  K \cap A_{g}(S,S+1) ) \geq \hat \si R^{n-1}  \geq c(n)\hat \si  S^{n-1}.$   \\
%Also, by covering $ K \cap B_{g}(p,R/10)$ by $c(n) R^n $ balls of radius one,
% we see that there is a curve made up of  broken length one geodesics between any points
% $s,r \in K$  which stays in $K$ and has length less than $c(n) R^n $: First one moves from 
%$s$ into $Z$ with a length $\leq 10 R$  (coming out of $p$) radial geodesic, if  necessary, and then 
% one moves along at most  $2 c(n)R^n$ length one radial  geodesics in the
% finite covering, until one arrives at a point on the radial geodesic going to 
%$r$ and then one moves along this radial geodesic to  
% $r$.

Let $K ,\hat K$ be different components of $ B_{g}(p,10R) \backslash B_{g}(p,1) $ and assume 
  $z \in K \cap \overline{B_{g}(p,L)}$ and $\hat z \in \hat K \cap B_{g}(p,R),$
   with $L>>1.$ 
 Any length minimising geodesic between $z,\hat z$ must lie  in $B_{g}(p,2R),$ by the triangle inequality.
If such a  geodesic doesn't go through $B_{g}(p,1),$ then it lies in $B_{g}(p,10R) \backslash B_{g}(p,1),$ and so 
 $K$ and $\hat K$ are connected,  leading to a contradiction.
 Hence for $z$ fixed,  every length minimising geodesic from $z$ to $\hat K \cap B_{g}(p,R)$ must go through 
 $B_{g}(p,1).$

We assume now, that $z \in  K \cap \boundary B_{g}(p,L) (\subseteq \overline{B_{g}(p,L)})$.
Let $\ga$ be any length minimising, speed one, geodesic starting at  $z,$  that makes it to $ (B_g(p,R/100))^c  \cap \hat K.$
Let $t$ be the last  time where $\ga(t) \in \overline{B_{g}(p,1)}.$ Since $\ga$ must go  through  $B_{g}(p,1),$ there must be a last time when $\ga(t) \in \overline{B_{g}(p,1)}.$ 
We must have $t \leq L+4$ and $t \geq L-4$ :  a) if $t> L+4,$ then we replace the curve before $\ga(t)$   by the radial geodesic from $z$ to $p$ followed by the radial geodesic  from $p$ to $\ga(t)$: this has length less than $L+2,$ which contradicts the fact that $\ga$ is length minimising up to $t$ and has length larger than $L+4.$ 
If $t \leq L-4,$ then $d_{g}(z,p) \leq d_g(z,\ga(t)) + d_g(p,\ga(t)) \leq L-3$, which is a contradiction, to $d_g(z,p) = L.$

A similar argument holds for the first time $\hat t,$ that $\ga(\hat t)$ reaches $\overline{B_{g}(p,1)}.$ 
Hence,  ({\bf Fact 2}):  if $v= \ga'(0)$  then 
\begin{eqnarray}
\exp(s v )  \in  A_g(s-L-10,s-L+10)  \label{importantinc}
\end{eqnarray}
 for all $s\in [2L, T_v]$ where $\exp(v T_v)$ is the first cut point of the geodesic starting at $z$ going in the direction $v$ 

 Let $\Theta$ be the set of length one directions $v$ such that there is some $V\leq T_v$ such that $\exp(s v)$ $s\in [0,V]$ is a length  minimising geodesic with $\ga(V) \in   A_{g}(S,S+1)\cap  \hat K.$ Note that for every 
 $b \in A_{g}(S,S+1)\cap  \hat K,$ there must be some $v \in \Theta$ and some $V \leq T_v$ such that 
 $\exp(Vv) = b.$ Also, \eqref{importantinc}, tells us that 
 $V \in (S+L-100,S+ L +100)$ for this $v \in \Theta,$ $V \leq T_v  $ with   $\exp(Vv) = b.$
 That is $ \hat K \cap A_{g}(S,S+1) \subseteq \{ \exp(v t ) \ | \  v \in \Theta,  t \in [S+L-100,S+L+100], t \leq T_v \} .$
 We continue with the convention from above, that $\curlA(t,\theta)$ is zero for all $t\geq T_{\theta}$ where $T_{\theta}$ is the first cut point
 of the geodesic going in the direction $\theta$.
   Fact 2 and Fact 1 show 
\begin{eqnarray}
\hat \si S^{n-1} && \leq \vol_g (\hat K \cap A_{g}(S,S+1)) \cr
 && \leq  \vol_g ( \{ \exp(v t ) \ | \  v \in \Theta,  t \in [S+L-100,S+L+100], t \leq T_v \}),
 \end{eqnarray}
 since according to  the above   we have
 $ \hat K \cap A_{g}(S,S+1) \subseteq \{ \exp(v t ) \ | \  v \in \Theta,  t \in [S+L-100,S+L+100], t \leq T_v \} .$

 Furthermore , we have $  \exp(v t ) \in B_{1000}(p),$ for all $t\in  [L-100,L+100],$ $v\in \Theta$ in view of the triangle inequality. 
Using  these facts,   the volume ratio lower and upper bounds,  the volume  estimates of Wei/Petersen ,  Dai/Wei from the start of this proof,     we see
\begin{eqnarray}
 && \si_1 {1000}^n \cr
 &&  \geq \vol_g ( B_{1000}(p))  \cr
 && \geq  \vol_g ( \{ \exp(v t ) \ | \  v \in \Theta,  t \in [ L-100,L+100]     \})   \cr
 && \geq C_4(p,n) \frac{(L-100)^{n-1}}{ (S+L+100)^{n-1} } \vol_g ( \{ \exp(v t ) \ | \  v \in \Theta,  t \in  [S+L-100,S +L+100]    \}) \cr
 && \ \ \  - 200(S+L+100)^{\al(2p-1)+(n-1)} \|\Rc_{-}\|_{L^p(B_{g}(p,200R))}^p \cr
 && \geq C_4(p,n)\hat{\sigma} S^{n-1} \frac{(L-100)^{n-1}}{ (S+L+100)^{n-1} }   -   200(S+L+100)^{\al(2p-1)+(n-1)}\ep_0
 \end{eqnarray}
  
 If we choose $R= 50L \geq 10^{10}$ (remembering $S \in [R/4,2R]$ ), then we have
   \begin{eqnarray}
   \si_1 {1000}^n  \geq C_5(p,n, \si_0,\si_1) R^{n-1} -R^{k(p,n)}\ep_0 
 \end{eqnarray}
 
  which leads to a contradiction for $R^{n-1} \geq   \frac{\si_1 2000^n}{C_5(p,n,\si_0,\si_1)} $ and
  $\ep_0 \leq \frac{\si_1}{R^{k(p,n)}}$.

\end{proof}

\newpage
 
\noindent {\it Conflict of interest statement}: There is no conflict of interest. \\
{\it Data availability statement }: No datasets were generated or analysed during the current study.\\

\end{document}